\def\mathscr{\textbf}
\def\cal{\mathcal}
\def\qed{\hfill \hbox{\hskip 6pt\vrule
width6pt height6pt depth1pt  \hskip1pt}
\smallskip}
\providecommand{\U}[1]{\protect\rule{.1in}{.1in}}
\newcommand{\R}{\mathbb R}
\newcommand{\N}{\mathbb N}
\newtheorem{theorem}{Theorem}
\newtheorem{corollary}[theorem]{Corollary}
\newtheorem{definition}{Definition}
\newtheorem{example}[theorem]{Example}
\newtheorem{lemma}[theorem]{Lemma}
\newtheorem{hypothesis}[theorem]{Hypothesis}
\newtheorem{proposition}[theorem]{Proposition}
\newtheorem{remark}[theorem]{Remark}
\newenvironment{proof}[1][Proof]{\noindent\textbf{#1.} }{\ \rule{0.5em}{0.5em}}
\newcommand{\bth}{\begin{theorem}}
\def\eth{\end{theorem}}
\newcommand{\bpr}{\begin{proposition}}
\newcommand{\epr}{\end{proposition}}
\newcommand{\bco}{\begin{corollary}}
\newcommand{\eco}{\end{corollary}}
\newcommand{\ble}{\begin{lemma}}
\newcommand{\ele}{\end{lemma}}
\newcommand{\bpf}{\begin{proof}}
\newcommand{\epf}{\end{proof}}
\newcommand{\bex}{\begin{example}}
\newcommand{\eex}{\end{example}}
\newcommand{\bdf}{\begin{definition}}
\newcommand{\edf}{\end{definition}}
\newcommand{\bre}{\begin{remark}}
\newcommand{\ere}{\end{remark}}
\newcommand{\beq}{\begin{equation}}
\newcommand{\eeq}{\end{equation}}
\newcommand{\bal}{\begin{aligned}}
\newcommand{\eal}{\end{aligned}}
\newcommand{\ben}{\begin{enumerate}}
\newcommand{\een}{\end{enumerate}}
\newcommand{\beqr}{\begin{eqnarray*}}
\newcommand{\eeqr}{\end{eqnarray*}}
\def\R{{\mathbb R}}
\def\L{\mathcal L}
\def\Re{\mathcal R}
\def\lan{{\langle}}
\def\ran{{\rangle}}
\def\hh{{\vskip 1mm \noindent }}
\def\vv{{\vskip 1mm}}
\def\Tr{\text Tr}
\begin{document}

\title{On weak uniqueness for  some degenerate SDEs
by  global $L^p$ estimates}


\author{ E. Priola
 \\ Department of Mathematics
\\ University of Torino \\
 {\small via Carlo Alberto 10, Torino} \\
 {\small enrico.priola@unito.it} } \maketitle

\begin{abstract}
 We prove
 uniqueness in law
  for
 possibly degenerate SDEs
having  a
linear part in the drift term.
Diffusion coefficients corresponding to   non-degenerate directions of the noise are  assumed to be   continuous.
 When the diffusion part is constant we recover
 the
  classical degenerate Ornstein-Uhlenbeck process
 which
 only
 has to
 satisfy the H\"ormander hypoellipticity condition. In the proof we  also use global  $L^p$-estimates for hypoelliptic Ornstein-Uhlenbeck operators
 recently proved  in Bramanti-Cupini-Lanconelli-Priola (Math. Z. 266 (2010))
  and
 adapt the
 localization procedure
 introduced by Stroock and Varadhan.
 Appendix contains a quite general localization principle for
  martingale problems.
  \end{abstract}


{\vskip 7 mm }
 \noindent \textbf{MSC (2010)}
 60H10, 60J60,  35J70.

\vspace{2.5 mm}
\noindent {\bf Key words:} Ornstein-Uhlenbeck
 processes, degenerate stochastic differential equations,
 well-posedness of  martingale problem, localization principle.

\vspace{2.5 mm}
\section{Introduction}


In this paper we prove existence and weak uniqueness (or uniqueness in law) for
 possibly degenerate
 SDEs like
 \begin{equation} \label{SDE1}
 dZ_{t}= A Z_t dt + b(Z_t)dt + B(Z_t)dW_t,\qquad
   t \ge 0, \; Z_0 =z_0 \in \R^d,
\end{equation}
 where    $A$ is a $d \times d$ real matrix,
 $W=(W_t)$ is a standard  $r$-dimensional
  Wiener process, $r \ge 1$,
 $ B(z)= \begin{pmatrix}
 B_0(z) \\
0
\end{pmatrix},$ with  $B_0(z) \in \R^{d_0} \otimes \R^r$ (i.e., $B_0(z)$
is a real $d_0 \times r$-matrix, for any $z \in \R^d$), $1 \le d_0 \le d$, and
   $B(z) \in \R^d \otimes \R^r$, $z \in \R^d$. Moreover, we suppose that
 $$
 b(z) = \begin{pmatrix}
 b_0(z) \\
0
\end{pmatrix},
$$
 where $b_0 : \R^d \to \R^{d_0}$ ($\R^{d_0} \simeq \R^{d_0} \otimes \R$) is a Borel and locally bounded function.

Writing $z \in \R^d$ in the form
$z =
    \begin{pmatrix}
 x \\
y
\end{pmatrix}
  \simeq (x,y) \in \R^d$, with $x \in \R^{d_0}$ and $y \in \R^{d_1}$ (if $d_1 = d - d_0
  =0$ then $z=x$)
   and, similarly, $Z_t = (X_t, Y_t)$, we may rewrite \eqref{SDE1} as
\begin{equation} \label{SDE}
\begin{pmatrix}
 dX_t \\
dY_t
\end{pmatrix}
 = A
\begin{pmatrix}
 X_t \\
Y_t
\end{pmatrix}
dt \, + \,
\begin{pmatrix}
 b_0(X_t, Y_t) \\
0
\end{pmatrix} dt
  \, + \, \begin{pmatrix}
 B_0(X_t, Y_t) \\
 0
\end{pmatrix}dW_t,
 \end{equation}
 $ t \ge 0, \; (X_0, Y_0)= z_0 = (x_0, y_0)  \in \R^d.$
We assume that   $B_0 $ is continuous from $\R^d$ into $\R^{d_0} \otimes \R^r$ and also that the $d_0 \times d_0$ symmetric matrix $Q_0(z) = B_0(z) B_0(z)^*$ (here $B_0(z)^*$ denotes the adjoint matrix of $B_0(z)$) is  positive  definite for any $z \in \R^d$ (see  (i) and (iii) in Hypothesis \ref{hy}).
 Moreover,  for any $z_0 \in \R^d$, the Ornstein-Uhlenbeck process
 $dZ_t = AZ_t dt + B(z_0) dW_t$ must satisfy a hypoellipticity
  type condition (see (ii) in Hypothesis \ref{hy}).
 These assumptions allow to prove the main theorem which is about weak uniqueness (see Theorem \ref{main1}). The proof is based  on  a suitable version of Calderon-Zygmund $L^p$-estimates and on a variant of the localization principle  introduced by Stroock and Varadhan.

  We also  prove well-posedness of \eqref{SDE}, assuming in addition that  there exists   a  smooth Lyapunov function $\phi : \R^d \to \R_+$ (see Hypothesis \ref{hy1} and Theorem \ref{main2}). This function
 controls the growth of the  coefficients (cf. Chapter 10 in \cite{SV79}) and  gives  a sufficient condition for the existence of global solutions.
 In the standard case of $\phi(z)
 = 1+ |z|^2 = 1 + |x|^2 + |y|^2$ ($|\cdot|$ denotes the euclidean norm) we assume  that
  there exists $C>0$ such that
\begin{align}\label{diss}
    \text{Tr}(Q_0(x,y)) + 2 \langle A (x,y) , (x,y)\rangle +
   2  \langle b_0 (x,y) , x\rangle_{\R^{d_0}} \le C (1+ |z|^2),\; z=(x,y) \in \R^d
\end{align}
(here Tr denotes the trace and  $\langle \cdot , \cdot \rangle$  the inner product).

  Solutions to equation \eqref{SDE} appear as a natural generalization of OU processes.
 On the other hand  degenerate Kolmogorov operators $\L$
  associated to \eqref{SDE}
  (see \eqref{ll}) arise in
  Kinetic Theory (see \cite{DV} and the references
  therein) and in  Mathematical Finance (see
   the survey paper \cite{LPP}). In addition
 diffusion processes like $(Z_t)$
  appear   in stochastic motion of particles according to the Newton law
 (see,  for instance, \cite{Fr}).

If $d = d_0$, i.e., we are in  the case of a non-degenerate diffusion, weak uniqueness
(or uniqueness in law)  has been  proved in \cite{SV69} even in the case of time dependent coefficients (see \cite{Kr} for a different proof of uniqueness when the coefficients  are  independent of time).
 This has been done  by introducing  the important  localization principle. It states that
 uniqueness is a local result in that it
suffices to show that each starting point has a
 neighbourhood on which the coefficients of our SDE equal other coefficients for which uniqueness
 holds (cf. Theorem 6.6.1 in \cite{SV79}).
This principle combined with   global   $L^p$-estimates
 for  heat equations
 has been used  in \cite{SV69} to prove the
 uniqueness result.

The results in \cite{SV69} have been generalized in several papers about non-degenerate diffusions (see \cite{BPa,Kr1} and the references therein) by allowing  some discontinuous coefficients $B_0(z)$ (see \cite{Na} for a counterexample to uniqueness
with $d \ge 3$ and $B_0(z)$  measurable).

Weak uniqueness results are also available for some degenerate SDEs with non  locally Lipschitz coefficients (see  \cite{ABBP,  BP1, BP2, Br, Fi, LL, M}). Such results do not cover equations like \eqref{SDE} under our assumptions.
In particular  related degenerate SDEs with $d_0 < d$ are considered in \cite{Br, M}. In  \cite{M} (see also \cite{DM})
 the $d_0 \times d_0$ non-degenerate diffusion part
 has bounded H\"older continuous coefficients but it is not assumed that the drift term
   has  a  linear part like $AZ_t dt$ (in particular the second component of $b(z)$ in  \eqref{SDE1} can be different from 0).
   In \cite{Br} degenerate SDEs with time-dependent coefficients which  grow at most linearly  are considered;  these equations  have a  linear   part in the drift which has to satisfy a  lower-diagonal block form.

It seems to be  a hard problem to  prove weak uniqueness for SDEs as in \cite{M} with the non-degenerate diffusion part which is only continuous (a special result in this direction is Theorem 5.14 in \cite{Br}).

To establish our main result  (see Theorem \ref{main1}) we first prove in Section A.3 of appendix a variant of the localization principle of   Stroock and Varadhan (see, in particular, Theorem \ref{uni1} which is based on Theorem \ref{key}
  and Lemma \ref{stop}; these results
 provide  extensions
 of some related theorems in Chapter 4 of \cite{EK}).
 We cannot apply directly the localization principle as it is stated in
 Section 6.6 of  \cite{SV79}  since our SDE is degenerate  and we cannot localize our linear function $z \mapsto Az$ and then provide   the necessary analytic regularity results
 (cf. Remark \ref{stro}). In
  the proof of uniqueness we also use  global regularity results  for hypoelliptic Ornstein-Uhlenbeck operators $\L_0$ (see \eqref{loo1}) in $L^p$-spaces with respect to the Lebesgue measure recently proved in \cite{BCLP} (see, in particular, Theorem \ref{BCLP}).
 The regularity results in \cite{BCLP} are proved  using that
 $\L_0 - \partial_t$ is left invariant with respect to a suitable Lie group structure on $\R^{d+1}$ (see \cite{LP}); this group in general is not homogeneous.

\smallskip
The plan of the paper is as follows. In Section 2 we start with basic definitions and preliminary results about well-posedness of \eqref{SDE}. We also formulate our main results. In Section 3 we prove a uniqueness result for
\eqref{SDE} assuming additional hypotheses  on the coefficients (see Theorem \ref{cons}).  In that section we also establish some necessary analytic results for OU hypoelliptic operators $\L_0$.
 The complete uniqueness result is proved in Section 4 where we remove the additional hypotheses using the localization procedure.
Finally Appendix contains a quite general localization principle for martingale problems.

\paragraph{Basic assumptions.}
  Recall that $(e_i)_{i=1, \ldots, d}$ denotes the canonical basis on $\R^d$. Moreover, $\lan \cdot, \cdot \ran $ indicates the inner product in any $\R^n$, $n \ge 1$, and $|\cdot|$ denotes the  Euclidean norm in $\R^n$.
 \begin{hypothesis} \label{hy} {\em { (i)} \
     The  symmetric $d_0 \times d_0 $ matrix
  $Q_0(z) = B_0(z) B_0(z)^* $  is  positive definite, for any $z \in \R^d.$

\hh(ii)  There exists a  non-negative integer $k$,
 such
 that the vectors
\begin{align} \label{kal}
 \{ e_1 , \ldots, e_{ d_0}, A e_1 ,
 \ldots, Ae_{ d_0}, \ldots, A^k e_1 , \ldots, A^k
 e_{ d_0}\} \;\;\; \mbox{generate} \;\; \R^d;
 \end{align}
  we
 denote by $k$ the {\it smallest} non-negative integer such that \eqref{kal}
 holds (one has $0 \le k \le d-1$).

\hh (iii) $b_0 : \R^d \to \R^{d_0} $ is Borel and locally bounded;
$B_0: \R^d \to \R^{d_0} \otimes \R^r $ is continuous. \qed}
\end{hypothesis}
\begin{hypothesis} \label{hy1} {\em There exists a smooth Lyapunov function $\phi$ for \eqref{SDE}, i.e., there exists  a $C^2$-function $\phi : \R^d \to (0, +\infty)$ such that $\phi \to + \infty$ as $|z|\to +\infty$ and
\begin{align}\label{lia}
 {\L} \phi(z) \le C \phi(z),\;\; z \in \R^d,
\end{align}
 for some $C>0$;   $ \L$ is the possibly degenerate Kolmogorov
  operator related to \eqref{SDE},
 \begin{equation}\label{ll}
 \L f(z) = \frac{1}{2} \Tr(Q_0(z) D^2_x f(z))
+ \lan A z , D f(z)\ran  \, + \,
\lan b_0(z) , D_x f(z)\ran,
\end{equation}
 $f \in C^{2}_K(\R^d), \; z \in \R^d,$
 where $D f(z) = (D_x f(z), D_yf(z)) \in \R^d$ indicates the gradient of $f$ in $z$
 and $D^2f(z)$ denotes the Hessian matrix of $f$ in $z$,
 $$D^2f(z) = \begin{pmatrix}
 D^2_x f(z) & D^2_{xy} f(z)\\
 D^2_{xy} f(z) & D^2_{y} f(z)
\end{pmatrix}\in \R^{d} \otimes \R^d. \qed
 $$
}
\end{hypothesis}

 Note that  $d_1 =0$ if and only if $k=0$. In this case    $d = d_0$ and we have a non-degenerate SDEs with $B(z)= B_0(z)$ for which weak uniqueness is already known (see \cite{SV79}).

 By the
   H\"ormander condition on commutators, \eqref{kal}
      is equivalent to the   hypoellipticity of the operator
   ${\cal L}_0 - \partial_t $
  in $(d+1) $  variables  $(t, z_1, \ldots,  z_d)$;
     here
    ${\cal L}_0$ is the OU operator
    \begin{equation} \label{loo1}
   {\cal L}_0 u (z) = \frac{1}{2} \sum_{i,j =1}^{{ d_0}} q_{ij}
\partial_{x_i x_j}^2
   u (z) +
  \sum_{i,j =1}^d a_{ij} \, z_{j} \partial_{z_i } u (z), \;\;\;
  z \in \R^d,
 \end{equation}
where $Q_0 = (q_{ij})_{i,j =1, \ldots, d_0}$ is symmetric and positive definite on $\R^{d_0}$ and the  $a_{ij}$ are the components of the
 $d \times d$-matrix $A$; further $\partial_{x_i}$ and
 $ \partial_{x_i x_j}^2$ denote  partial derivatives.

 It is also well-known (see Section 1.3 in \cite{Za}) that
\eqref{kal} is equivalent to the fact that the symmetric $d \times d$ matrix
\begin{equation}\label{qtt2}
Q_t = \int_0^t e^{sA} Q e^{sA^*} ds \;\;\text{is positive definite for all} \; t>0,
 \;\; \text{with} \;
 Q  = \begin{pmatrix}
 Q_0 & 0\\
 0 & 0
\end{pmatrix};
\end{equation}
 here $e^{sA}$ denotes the exponential matrix of $sA$.

\hh \textbf{An example.}
Let us consider the following three-dimensional  example
\begin{equation}\label{ex1}
\begin{cases}
dx_t =  (- x_t^3 + \frac{y_t}{|y_t|})\, dt \,  + \,
a(x_t, y_t, z_t)
 \, dW_t
\\
dy_t = (x_t + y_t) dt
\\
dz_t = (y_t + z_t) dt,
\end{cases}
\end{equation}
where $(x_t, y_t, z_t ) \in \R^3$, $(x_0, y_0, z_0 )= \xi$. Here $W = (W_t)$ is a one-dimensional Wiener process. Thus  $d_0 =1$, $b_0(x,y,z) = -x^3 +\frac{y}{|y|}$, $A = \left(
\begin{array}{ccc}
    0 & 0 & 0 \\
  1 & 1 & 0 \\
  0 & 1 & 1
\end{array}
\right)$ and we can assume that $a$ is continuous and bounded and that $a^2$ is  positive  on $\R^3$. Note that here $k=2.$
 The associated degenerate Kolmogorov operator is $$
 {\cal L} =\text{\small{$\frac{1}{2}$}}a^2(x,y,z)\, \partial_{xx}^2 - x^3 \partial_x + \frac{y}{|y|} \partial_x + (x+y) \partial_y + (y+ z) \partial_z
$$
 and as  Lyapunov function we may consider  $\phi (x,y,z) = x^2 + y^2 + z^2 +1.$
 Hence  Hypotheses \ref{hy} and \ref{hy1}  hold and we can prove well-posedness for \eqref{ex1} or, equivalently, well-posedness  of the martingale problem for $\cal L$ starting from any initial distribution on $\R^3$.

\paragraph{Notations.}  We will use the letter $c$ or $C$ with subscripts for
finite positive
 constants whose  precise  value is unimportant.

 For a matrix $B \in \R^r \otimes \R^d$, $r \ge 1$, $d \ge 1$,  $\| B \|$
denotes its Hilbert-Schmidt norm.

The space $B_b(\R^d)$ denotes the Banach space of all real bounded and Borel functions $f: \R^d \to \R$ endowed with the supremum norm $\| \cdot \|_{\infty}$; its subspace
of all continuous functions is indicated by $C_b(\R^d)$. Moreover
  $C^{2}_K=C^{2}_K (\R^d) \subset C_b(\R^d)$ is the space of functions of class $C^2$ with compact support and similarly  $C^{\infty}_K (\R^d) \subset C_b(\R^d)$ is the space of functions of class $C^{\infty}$ with compact support. In addition we consider the space $C^{2}_b (\R^d) \subset C_b(\R^d)$ consisting of  all functions of class $C^2$ having   first and second  partial derivatives
which are bounded on $\R^d$.

 We also consider standard $L^p$-spaces $L^p(\R^d)$ with respect to the Lebesgue measure and indicate by $\| \cdot\|_p$ (or $\| \cdot\|_{L^p}$) the usual $L^p$-norm, $p \ge 1$. For measurable matrix-valued functions  $u : \R^d \to \R^r \otimes \R^d$ we also consider $\| u\|_p =
(\int_{\R^d} \| u(z)\|^p dz )^{1/p}$.

Finally by ${\cal P}(\R^d)$ we denote the set of all Borel probability measures on $\R^d$. A  probability space will be indicated with $(\Omega, {\cal F}, P)$ and $E$ (or $E^P$) will denote expectation with respect to $P$.


\section {Basic definitions and  main results }

Our definitions  will mainly follow  Chapter 4 in \cite{IW} (see also \cite{EK, SV79}).
 Let us consider the SDE
\begin{equation} \label{SDE13}
 Z_t
 =  Z_0 + \int_0^t b(Z_s) ds
   \, + \,
 \int_0^t B(Z_s)   d W_s,\, \;\; t \ge 0.
 \end{equation}
where $b: \R^d \to \R^d$ and $B : \R^d \to \R^d \otimes \R^r$ are Borel and locally bounded functions and $W = (W_t)$ denotes a $r$-dimensional Wiener process.  Note that equation \eqref{SDE1} is a special case of \eqref{SDE13}.

The corresponding Kolmogorov operator (generator) is
\begin{equation} \label{lll}
   {\tilde \L} f(z)=  \frac{1}{2} \Tr(B(z)B^*(z) D^2 f(z))
  \, + \, \lan b(z) , D f(z)\ran, \;\;\;   f \in C_K^{2}(\R^d), \; z \in \R^d.
\end{equation}
 Let $\mu \in {\cal P}(\R^d)$. Let us recall two related  notions of solutions.
\begin{definition} {\em A \textit {weak  solution} $Z = (Z_t)= (Z_t)_{t \ge 0}$ to \eqref{SDE13} with initial condition  $\mu$ is a continuous $d$-dimensional process (i.e., it has  continuous paths with values in $\R^d$)
defined on a probability space $(\Omega, {\cal F}, P) $ endowed with a reference filtration $({\cal F}_t)$ such that

\hh (i) there exists an $r$-dimensional $\cal F_t$-Wiener process $W = (W_t)$;

\hh (ii) $Z$ is ${\cal F}_t$-adapted and the law of $Z_0$ is $\mu$;

\hh (iii) $Z$ solves \eqref{SDE13} $P$-a.s.. }
\end{definition}
  \begin{definition}\label{dft}{\em A \textit{solution of the martingale problem}  for $(\tilde {\cal L}, \mu)$  is a continuous   $d$-dimensional process $Z= (Z_t)$ defined on some probability space $(\Omega, {\cal F}, P) $ such that,
 for any $f \in C_K^{2}(\R^d)$,
\begin{align}\label{mart4}
    M_t(f) = f(Z_t) - \int_0^t {\tilde \L} f(Z_s) ds, \;\; t \ge 0, \;\; \text{is a
    martingale}
\end{align}
(with respect to the natural filtration $({\cal F}_t^Z)$,
 where ${\cal F}_t^Z = \sigma(Z_s \, : \, 0 \le s \le t)$, i.e., ${\cal F}_t^Z$
  is the  $\sigma$-algebra generated by the random variables $Z_s $,
  $ 0 \le s \le t$),
  and moreover, the law of $Z_0$ is $\mu$. }
\end{definition}
Note that ${\tilde \L} : D({\tilde \L}) = C_K^{2}(\R^d) \subset C_b(\R^d)
 \to B_b (\R^d)$ satisfies Hypothesis \ref{bpt} in Appendix. This fact  is quite standard; we  sketch the proof in Remark \ref{fini1}.

\smallskip
If $Z$ is a weak solution on $(\Omega, {\cal  F}, P)$  an  application of It\^o's formula shows that $Z$ is also a martingale solution for $(\tilde \L, \mu)$.

Conversely, if there exists  a martingale solution $Z$ for $({\tilde \L}, \mu) $ on $(\Omega, {\cal  F}, P)$ then
there exists a stochastic basis $(\hat \Omega, \hat {\cal  F}, (\hat {\cal  F}_t), \hat P)$ on which there exists an $r$-dimensional $\hat {\cal  F}_t$-Wiener process and a weak solution $Y = (Y_t)$ for \eqref{SDE13} such that the law of $Y$ coincides with the one of $Z$ (for more details see Section IV.2 in \cite{IW} or Section 5.3 in \cite{EK}). Thus we have (cf. Proposition IV.2.1 in \cite{IW})
\begin{theorem} \label{iki} The existence of a weak solution to \eqref{SDE13} with initial condition $\mu$ is equivalent to
the existence of  a martingale solution for $({\tilde \L}, \mu)$.
\end{theorem}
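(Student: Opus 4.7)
Following the approach of Section IV.2 in \cite{IW} I would treat the two implications separately.

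$(\Rightarrow)$ Let $Z$ be a weak solution on $(\Omega,{\cal F},({\cal F}_t),P)$ driven by $W$, and fix $f\in C^{2}_K(\R^d)$. It\^o's formula applied to $f(Z_t)$ gives, using \eqref{SDE13},
\begin{equation*}
f(Z_t)-f(Z_0)-\int_0^t \tilde{\L}f(Z_s)\,ds \;=\; \int_0^t \lan Df(Z_s),\, B(Z_s)\,dW_s\ran.
\end{equation*}
The integrand $B(Z_s)^{*}Df(Z_s)$ is supported in the compact set $\{Df\neq 0\}$, and on that set it is bounded by local boundedness of $B$ and continuity of the paths of $Z$. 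Hence the right-hand side is a square-integrable $({\cal F}_t)$-martingale starting at $0$, so $M_t(f)$ is an $({\cal F}_t)$-martingale; since $M_t(f)$ is $({\cal F}^Z_t)$-adapted and ${\cal F}^Z_t\subset{\cal F}_t$, it is also an $({\cal F}^Z_t)$-martingale, and $Z$ solves the martingale problem for $(\tilde{\L},\mu)$.

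$(\Leftarrow)$ Let $Z$ be a martingale solution for $(\tilde{\L},\mu)$ on $(\Omega,{\cal F},P)$. The first task is to extend \eqref{mart4} from $C^{2}_K(\R^d)$ to the non-compactly supported coordinate functions $g_i(z)=z_i$ and $h_{ij}(z)=z_iz_j$ by a standard truncation: multiply each by a cut-off $\chi_n\in C^{\infty}_K(\R^d)$ equal to $1$ on $B(0,n)$, apply \eqref{mart4} to $\chi_n g_i$ and $\chi_n h_{ij}$, and stop at $\tau_n:=\inf\{t\ge 0:|Z_t|\ge n\}$; continuity of the paths of $Z$ forces $\tau_n\uparrow\infty$ a.s., and passing to the limit shows that
\begin{equation*}
M^i_t\;:=\;Z^i_t-Z^i_0-\int_0^t b_i(Z_s)\,ds,\qquad i=1,\ldots,d,
\end{equation*}
is a continuous local martingale. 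Applying the extended identity to $h_{ij}$ and using the product formula yields
\begin{equation*}
\lan M^i,M^j\ran_t\;=\;\int_0^t (BB^{*})_{ij}(Z_s)\,ds.
\end{equation*}

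The essential technical point is then to realize $M$ as a stochastic integral against the prescribed $B$. Because $B(z)\in\R^d\otimes\R^r$ is rectangular and, in the degenerate setting of this paper, $B(z)B(z)^{*}$ is singular (the lower $d_1\times d_1$ block vanishes), one cannot simply invert $B$ to recover a driving Brownian motion on $(\Omega,{\cal F},P)$; the probability space must in general be enlarged. The standard remedy is to adjoin an independent $r$-dimensional Wiener process $\tilde W$ on a product space $(\hat\Omega,\hat{\cal F},(\hat{\cal F}_t),\hat P)$, pick a Borel measurable pseudo-inverse $B^{+}(z)$ of $B(z)$, and set
\begin{equation*}
\hat W_t \;:=\; \int_0^t B^{+}(Z_s)\,dM_s \,+\, \int_0^t\bigl(I_r-B^{+}(Z_s)B(Z_s)\bigr)\,d\tilde W_s.
\end{equation*}
Using the covariation identity above, L\'evy's characterization shows that $\hat W$ is an $r$-dimensional Wiener process on the enlarged basis and that $\int_0^t B(Z_s)\,d\hat W_s = M_t$; the natural extension $\hat Z$ of $Z$ to $\hat\Omega$ is then a weak solution to \eqref{SDE13} with initial distribution $\mu$. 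This is precisely Proposition IV.2.1 of \cite{IW} specialized to our equation, and is the step I would expect to be the main obstacle to making the argument self-contained.
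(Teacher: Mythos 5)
Your argument is correct and follows exactly the route the paper takes: the paper proves the forward implication by It\^o's formula and the converse by invoking the martingale-representation/enlargement construction of Proposition IV.2.1 and Section IV.2 in \cite{IW}, which is precisely the pseudo-inverse argument you spell out (including the extension of \eqref{mart4} to the coordinate functions, the identification $\lan M^i,M^j\ran_t=\int_0^t (BB^*)_{ij}(Z_s)\,ds$, and the adjunction of an independent Wiener process to handle the degeneracy of $BB^*$). The only difference is that the paper leaves these details to the cited references, whereas you make them explicit.
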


The following  result is essentially  due to Skorokhod (for a proof   one can argue as in the proofs of  Theorems IV.2.3 and IV.2.4 in \cite{IW};  see also Theorem 5.3.10 in \cite{EK}); recall that the Lyapunov function provides a sufficient condition for the non-explosion of solutions.

\begin{theorem} \label{sko}
 If the coefficients $b$ and $B$ are continuous functions on $\R^d$ and we assume  the existence of   a Lyapunov function $\phi$ as in \eqref{lia} (i.e., ${\tilde \L} \phi$ $\le C \phi$ on $\R^d$, $\phi : \R^d \to (0, +\infty)$ is a $C^2$-function and $\phi \to + \infty$ as $|z|\to +\infty$)
 then  there exists at least  one weak solution to \eqref{SDE13} for any initial condition $\mu \in {\cal P}(\R^d)$.
\end{theorem}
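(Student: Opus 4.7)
The plan is to follow the classical Skorokhod existence scheme: regularize $b$ and $B$ into bounded smooth coefficients $b_n, B_n$ agreeing with $b,B$ on the closed ball $\overline{B(0,n)}$, build a strong solution $Z^n$ of the approximating SDE on a fixed stochastic basis, use \eqref{lia} to obtain uniform-in-$n$ control on the exit times of $Z^n$ from large balls, extract a weakly convergent subsequence of the laws $P^n=\mathrm{Law}(Z^n)$ on $C([0,T];\R^d)$, and identify the limit as a martingale solution of $(\tilde{\mathcal L},\mu)$. Theorem \ref{iki} then converts this into a weak solution of \eqref{SDE13}.

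The core step is a Lyapunov/It\^o argument that rules out explosion uniformly in $n$. Fix $R>0$ and take $n\ge R$, so that on $\overline{B(0,R)}$ the approximating generator $\tilde{\mathcal L}^n$ agrees with $\tilde{\mathcal L}$ and hence $\tilde{\mathcal L}^n\phi\le C\phi$ there. Let $\tau_R^n=\inf\{t\ge 0:|Z^n_t|\ge R\}$. Applying It\^o's formula to $s\mapsto e^{-Cs}\phi(Z^n_s)$ on $[0,t\wedge\tau_R^n]$ produces a local supermartingale, so optional stopping gives
\[
E\bigl[e^{-C(t\wedge\tau_R^n)}\phi(Z^n_{t\wedge\tau_R^n})\bigr]\le \int_{\R^d}\phi\,d\mu,
\]
and therefore
\[
P(\tau_R^n\le t)\le \frac{e^{Ct}\int_{\R^d}\phi\,d\mu}{\inf_{|z|\ge R}\phi(z)},
\]
which tends to $0$ as $R\to\infty$ uniformly in $n$. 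On the stopped processes $Z^n_{\cdot\wedge\tau_R^n}$ the coefficients are bounded by a constant depending only on $R$, so a standard Kolmogorov moment estimate $E[|Z^n_{t\wedge\tau_R^n}-Z^n_{s\wedge\tau_R^n}|^{4}]\le C_R|t-s|^{2}$ combines with the exit-time bound above to yield tightness of $\{P^n\}$ on $C([0,T];\R^d)$ for every $T>0$.

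Passing to a subsequence and invoking Skorokhod's representation theorem, I would produce, on an auxiliary probability space, processes $\tilde Z^{n_k}$ distributed as $Z^{n_k}$ with $\tilde Z^{n_k}\to\tilde Z$ almost surely in $C([0,T];\R^d)$. For every $f\in C_K^{2}(\R^d)$, the continuity of $b,B$ gives $\tilde{\mathcal L}^{n_k}f\to\tilde{\mathcal L}f$ uniformly on compact sets; writing the martingale identity \eqref{mart4} for $\tilde Z^{n_k}$ and letting $k\to\infty$ via dominated convergence (the exit-time bound handles the unbounded state space and truncation of $f(\tilde Z)$ on $\{t<\tau_R\}$) shows that $f(\tilde Z_t)-\int_0^t\tilde{\mathcal L}f(\tilde Z_s)\,ds$ is a martingale with respect to the natural filtration of $\tilde Z$. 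Hence $\tilde Z$ solves the martingale problem for $(\tilde{\mathcal L},\mu)$, and Theorem \ref{iki} delivers the desired weak solution. The main obstacle is precisely the non-explosion step: without \eqref{lia} the laws $\{P^n\}$ need not be tight, and it is the conversion of the differential inequality $\tilde{\mathcal L}\phi\le C\phi$ into a uniform quantitative bound on $P(\tau_R^n\le t)$, via It\^o and optional stopping, that makes the whole scheme go through.
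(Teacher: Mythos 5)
Your overall scheme --- approximate the coefficients, use the Lyapunov function together with It\^o's formula and optional stopping to get a non-explosion bound uniform in the approximation, prove tightness, identify the limit as a martingale solution, and invoke Theorem \ref{iki} --- is exactly the classical Skorokhod argument that the paper does not reproduce but delegates to Theorems IV.2.3 and IV.2.4 of \cite{IW} and Theorem 5.3.10 of \cite{EK}. Two steps, however, are stated in a form in which they fail as written. First, you cannot choose $b_n,B_n$ both \emph{smooth} and \emph{exactly equal} to the merely continuous $b,B$ on $\overline{B(0,n)}$: a smooth function agreeing with $b$ on a ball forces $b$ to be smooth there. These two demands do different jobs in your proof (exact agreement gives $\tilde{\L}^n\phi\le C\phi$ on the ball; smoothness gives strong solvability of the approximating SDE), and you must give one of them up. Either mollify, accept only uniform-on-compacts convergence $b_n\to b$, $B_n\to B$, and replace the Lyapunov inequality by $\tilde{\L}^n\phi\le C\phi+\varepsilon_n$ on $\overline{B(0,R)}$ with $\varepsilon_n\to 0$ (which still yields your exit-time bound); or truncate without smoothing, in which case the approximating coefficients are only bounded continuous, strong solutions need not exist (Tsirelson-type examples), and you must construct weak solutions of the truncated equations by the Euler polygonal scheme as in \cite{IW}.

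Second, your bound $P(\tau^n_R\le t)\le e^{Ct}\bigl(\int_{\R^d}\phi\,d\mu\bigr)/\inf_{|z|\ge R}\phi(z)$ is vacuous whenever $\int_{\R^d}\phi\,d\mu=+\infty$, which is perfectly possible for a general $\mu\in{\cal P}(\R^d)$ since $\phi$ is unbounded, while the theorem claims existence for \emph{every} initial law. The standard repair is to condition on ${\cal F}_0$: for $K>0$ one gets $P(\tau^n_R\le t)\le P(|Z_0|>K)+e^{Ct}\bigl(\sup_{|z|\le K}\phi(z)\bigr)/\inf_{|z|\ge R}\phi(z)$, and one chooses $K$ first and then $R$. (Alternatively, prove the result for $\mu=\delta_x$ and superpose, but without uniqueness that requires a measurable selection $x\mapsto P^x$, so the conditioning route is cleaner.) With these two corrections your argument is complete and coincides with the proof the paper points to.
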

  If the drift $b$ is not continuous (as it happens in \eqref{SDE1} where $b(z) = A z + \begin{pmatrix}b_0(z)\\ 0 \end{pmatrix}$, $z \in \R^d$) to get existence of solution in general  one needs  additional non-degeneracy of the noise. For instance, if $B=0$ and $b$ is discontinuos  there are  many examples of  deterministic equations $Z_t = Z_0 + \int_0^t b(Z_s)ds$ for which there is no existence of  solutions.

\begin{definition} {\em
We say that \textit{ weak uniqueness or uniqueness in law holds for \eqref{SDE13} with initial condition $\mu \in {\cal P}(\R^d)$} if given two weak solutions $Z$ and $Z'$ (even defined on different stochastic bases) such that the law of $Z_0$ and $Z'_0$ is $\mu$ they have the same finite dimensional distributions. Similarly we say that  \textit{ uniqueness in law holds for the martingale problem for $({\tilde \L}, \mu)$} (cf. Section  A.1).}
\end{definition}
It is clear that uniqueness in law  for $({\tilde \L}, \mu)$ implies uniqueness in law for \eqref{SDE13}; also the converse holds (see Corollary 3.3.5 in \cite{EK}). Indeed we have
\begin{theorem} \label{eki}
Uniqueness in law for \eqref{SDE13} holds with initial condition $\mu$ if and only if uniqueness in law for the martingale problem for $({\tilde \L}, \mu)$ holds.
\end{theorem}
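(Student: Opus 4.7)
The plan is to prove the two implications separately, using Theorem \ref{iki} as the main bridge between the SDE and martingale problem formulations. Note first that since weak solutions and martingale solutions are continuous $\R^d$-valued processes, ``same finite dimensional distributions'' is equivalent to ``same law on path space'' (the Borel $\sigma$-algebra on $C([0,\infty);\R^d)$ is generated by the cylinder sets), so I will freely pass between these notions.

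For the easier direction ($\Leftarrow$), suppose uniqueness in law holds for the martingale problem $({\tilde \L}, \mu)$. Given two weak solutions $Z$ and $Z'$ of \eqref{SDE13} with initial distribution $\mu$, possibly on different stochastic bases, an application of It\^o's formula to $f(Z_t)$ and $f(Z'_t)$ for $f \in C_K^2(\R^d)$ shows that the processes $M_t(f)$ in \eqref{mart4} are martingales with respect to the respective natural filtrations. Hence $Z$ and $Z'$ are both solutions of the martingale problem for $({\tilde \L}, \mu)$, and by assumption they have the same finite dimensional distributions.

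For the converse direction ($\Rightarrow$), suppose uniqueness in law holds for \eqref{SDE13} with initial condition $\mu$. Let $Z$ and $Z'$ be two martingale solutions for $({\tilde \L}, \mu)$, defined on possibly different probability spaces. By the ``converse part'' already recalled before Theorem \ref{iki} (following Section IV.2 of \cite{IW} or Section 5.3 of \cite{EK}), one can construct, on a possibly enlarged stochastic basis, an $r$-dimensional Wiener process and a weak solution $Y$ of \eqref{SDE13} whose law on path space coincides with that of $Z$; similarly one obtains a weak solution $Y'$ with the same law as $Z'$. Since $Y_0$ and $Y'_0$ are both distributed as $\mu$, uniqueness in law for the SDE yields that $Y$ and $Y'$ have the same finite dimensional distributions, and hence so do $Z$ and $Z'$.

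The only delicate point is that the passage from a martingale solution to a weak solution in the second direction requires enlarging the probability space in order to introduce the driving Wiener process, which need not live on the original space on which $Z$ is defined; this is the construction provided by the martingale representation / Skorokhod-type argument cited above and it does not affect the law of the process $Z$ itself, so the uniqueness-in-law chain goes through. I would not reproduce this enlargement construction, referring instead to Proposition IV.2.1 of \cite{IW} or Proposition 5.3.2 / Corollary 3.3.5 of \cite{EK}.
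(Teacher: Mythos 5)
Your proposal is correct and follows essentially the same route as the paper, which proves this equivalence only by citation: the easy direction via It\^o's formula, and the converse by realizing each martingale solution as a weak solution with the same law on an enlarged stochastic basis (the construction the paper recalls just before Theorem \ref{iki}, with references to Section IV.2 of \cite{IW} and Corollary 3.3.5 of \cite{EK}). Your write-up simply makes explicit the argument the paper delegates to those references.
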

\begin{definition} {\em Finally,  we say that \textit{  the martingale problem for ${\tilde \L}$ is  well-posed} if, for any $\mu \in {\cal P}(\R^d)$,   there exists a martingale solution
 for $({\tilde \L}, \mu)$ and, moreover,  uniqueness in law  holds  for  the martingale problem for $({\tilde \L}, \mu)$. Similarly, we can define well-posedness for \eqref{SDE13}.
}
\end{definition}
 Let us come back to our SDE \eqref{SDE1} associated to $\L$
 given in \eqref{ll}. This is our main  result.
\begin{theorem} \label{main1}  Assume  Hypothesis \ref{hy} and suppose that for any $x \in
  \R^d$ there exists a martingale solution  for  $(\L, \delta_x)$.

 Then
  the martingale problem for $\L$
is well-posed.
  \end{theorem}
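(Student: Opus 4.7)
The plan is to derive Theorem~\ref{main1} by combining the existence hypothesis with a uniqueness statement obtained from two ingredients: global $L^p$-estimates for a frozen hypoelliptic OU operator, and a non-standard localization principle. Since existence of $(\L,\delta_x)$-solutions is assumed for every $x$ and $\L$ meets the conditions of Appendix~A.3 (cf.\ Remark~\ref{fini1}), it suffices by the standard disintegration argument to prove uniqueness in law for every Dirac initial datum $\delta_{z_0}$.

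First I would establish uniqueness in law under the additional hypotheses that $B_0$ is bounded, uniformly non-degenerate, and close (in sup norm) to a constant matrix $\bar B_0$, and that $b_0$ is bounded; this should be Theorem~\ref{cons}. Freeze $\bar Q_0 = \bar B_0 \bar B_0^*$ and consider the hypoelliptic OU operator $\L_0$ of \eqref{loo1}. By Hypothesis~\ref{hy}(ii) the H\"ormander condition holds for $\L_0-\partial_t$, and Theorem~\ref{BCLP} (taken from \cite{BCLP}) provides the resolvent estimate $\|D^2_x u\|_p \le C \|f\|_p$ for solutions of $\lambda u - \L_0 u = f$ with $\lambda$ large. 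Writing $\L = \L_0 + R$, where $R$ collects the perturbations $\tfrac12\Tr\bigl((Q_0(z)-\bar Q_0)D^2_x\cdot\bigr) + \lan b_0, D_x\cdot\ran$, one solves $\lambda u - \L u = f$ for smooth $f$ by a contraction argument provided $\|B_0-\bar B_0\|_\infty$ is small enough; the resulting Krylov-type bound $E\int_0^T|f(Z_s)|ds \le C\|f\|_p$ for any martingale solution $Z$ then yields, via the usual It\^o argument, that all martingale solutions share the same one-dimensional marginals, hence uniqueness in law.

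Next I would remove the additional hypotheses by localizing at each $z_0$. Choose $\bar B_0 := B_0(z_0)$ and a small neighbourhood $U$ of $z_0$ on which $\|B_0(z)-\bar B_0\|$ is as small as Step~1 requires (by continuity of $B_0$ and Hypothesis~\ref{hy}(i)); construct $\tilde B_0, \tilde b_0$ that coincide with $B_0, b_0$ on $U$ and satisfy the additional hypotheses of Step~1 globally. The crucial point is that the linear part $Az$ must be \emph{left unchanged}, since it is built into $\L_0$ and into the hypoellipticity condition \eqref{kal}. Step~1 then gives well-posedness for the modified operator $\tilde\L$, and the localization principle of Appendix~A.3 (Theorem~\ref{uni1}) transfers uniqueness for $(\tilde\L,\delta_{z_0})$ to uniqueness for $(\L,\delta_{z_0})$, since the two operators agree on $U$.

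The hardest step is the interplay between the two ingredients. The classical Stroock--Varadhan localization in \cite{SV79} replaces the drift globally by a constant outside a neighbourhood, which is exactly what we cannot do here without destroying the hypoellipticity of $\L_0-\partial_t$ and the validity of the $L^p$-estimates of \cite{BCLP}. Producing a localization principle that preserves the unbounded linear term $Az$ while still allowing one to read off global uniqueness from local agreement of coefficients is the main technical novelty; this forces the use of a variant of the Stroock--Varadhan scheme (Theorems~\ref{key} and~\ref{uni1}, Lemma~\ref{stop} in Appendix~A.3) in place of the classical one, and verifying its applicability to $\L$ is the key check.
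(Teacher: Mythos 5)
Your overall architecture --- frozen-coefficient $L^p$ estimates for the hypoelliptic OU operator, a perturbation argument exploiting closeness of $Q_0$ to a constant matrix, and a localization principle that leaves the unbounded linear drift $Az$ untouched --- is exactly the paper's, and you correctly identify why the classical Stroock--Varadhan localization cannot be used (cf.\ Remark \ref{stro}). There are, however, two genuine gaps in your middle step.

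First, the drift. You fold $\lan b_0, D_x\cdot\ran$ into the perturbation $R$ and claim the contraction closes ``provided $\|B_0-\bar B_0\|_\infty$ is small enough''. But $b_0$ is only assumed bounded, not small, so the first-order part of $R$ is not a small perturbation unless you also prove an estimate of the form $\|D_x R(\lambda,\L_0)g\|_p\le \varepsilon(\lambda)\|g\|_p$ with $\varepsilon(\lambda)\to 0$ as $\lambda\to\infty$. No such bound is established in the paper (Lemma \ref{nu1} only gives $\|D R(\lambda,\L_0)f\|_p\le \frac{C}{\lambda}\|Df\|_p$, which is useless here), and proving it for a degenerate OU semigroup in $L^p$ is a separate piece of work. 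The paper sidesteps this entirely: Theorem \ref{cons} treats the operator $\L_1$ with no $b_0$ at all, the perturbation $\Re$ contains only the second-order term $\frac{1}{2}\Tr([Q_0(z)-\hat Q_0]D^2_x\cdot)$, and the bounded, compactly supported drift $b_j=b_0\cdot 1_{B(z_j,\delta_j)}$ is reinstated only at the very end (Step III of the proof of Theorem \ref{main1}) by Girsanov's theorem. As written, your contraction does not close.

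Second, the passage from the analytic estimate to arbitrary martingale solutions. You write that the Krylov-type bound ``for any martingale solution $Z$ then yields, via the usual It\^o argument'' the equality of marginals. But the function $u$ produced by your contraction (a fixed point in an $L^p$-based space) is not known to be $C^2$, since $Q_0(\cdot)$ is merely continuous, so It\^o's formula cannot be applied to $u(Z_t)$. This is the technical heart of the paper's proof: Theorem \ref{qvar} obtains the bound $|\int_0^\infty e^{-\lambda t}E[f(Y_t)]\,dt|\le C\|f\|_p$ for an arbitrary martingale solution by discretizing the diffusion coefficient in time (the processes $X^m$, $Z^m$), conditioning on the dyadic grid so as to reduce to the frozen-coefficient semigroup estimate of Lemma \ref{nu0}, and then running the perturbation identity \eqref{chi} at the level of the functionals $V_m(\lambda,z)$ rather than at the level of a classical solution $u$; the same identity applied to the difference $T(\lambda,z)=G_1(\lambda,z)-G_2(\lambda,z)$ of two solutions is what gives equality of one-dimensional marginals in the proof of Theorem \ref{cons}. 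Without this (or an equivalent approximation device) the step from the a priori PDE estimate to uniqueness in law is not justified.
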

 Using Theorems \ref{sko} and \ref{main1} and Corollary \ref{exi2} we  obtain
\begin{theorem} \label{main2}  Assume Hypotheses \ref{hy} and \ref{hy1}.
  Then
  the martingale problem for $\L$
is well-posed.
  \end{theorem}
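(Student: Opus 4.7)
\textbf{Proof proposal for Theorem \ref{main2}.} The plan is to reduce Theorem \ref{main2} to Theorem \ref{main1} plus a separate existence statement. Theorem \ref{main1} already gives well-posedness of the martingale problem for $\L$ under Hypothesis \ref{hy}, provided that for every $z_0 \in \R^d$ there exists at least one martingale solution for $(\L, \delta_{z_0})$. Hence, under the additional Hypothesis \ref{hy1}, the task reduces to constructing such a solution at each $z_0$; existence starting from a general $\mu \in {\cal P}(\R^d)$ then follows by the mixture procedure of Corollary \ref{exi2}.

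To produce a martingale solution starting at $z_0$, I would approximate the Borel drift $b_0$ by a sequence of continuous, locally bounded functions $b_0^n$ (e.g.\ by mollification), so that the SDE with $b_0$ replaced by $b_0^n$ has entirely continuous coefficients. Because $b_0$ is locally bounded and $\phi$ is coercive, a routine check shows that the same $\phi$ remains a Lyapunov function for the regularized operators $\L^n$, with a constant independent of $n$ (the extra term $\lan b_0^n, D_x \phi \ran$ can be bounded uniformly in $n$ by dominating $|b_0^n|$ with a suitable function of $|z|$ through the mollification bound). Theorem \ref{sko} then yields weak solutions $Z^n$ of the approximate SDEs starting at $z_0$, and the Lyapunov estimate $E[\phi(Z_t^n)] \le \phi(z_0) e^{Ct}$ combined with a standard moment-of-increment argument gives tightness of $\{Z^n\}$ on $C([0,T]; \R^d)$. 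Passing to a weakly convergent subsequence $Z^{n_k} \Rightarrow Z^*$ and writing out the martingale identity for each $Z^{n_k}$, one wants to verify that the limit $Z^*$ solves the martingale problem for $\L$.

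The main obstacle is the discontinuity of $b_0$: for $f \in C^2_K(\R^d)$ the diffusive and linear-drift contributions pass to the limit by continuity, but one still needs
\[
\int_0^t \lan b_0^n(Z_s^n) - b_0(Z_s^n), D_x f(Z_s^n)\ran \, ds \longrightarrow 0
\]
in $L^1$ along the subsequence. A clean way to bypass this is an alternative direct construction by Girsanov's theorem: first produce the solution of the drift-truncated equation $dZ_t = AZ_t\, dt + B(Z_t)\, dW_t$ (its existence under Hypothesis \ref{hy} follows from Theorem \ref{sko}, as \eqref{diss} for $\L$ implies a Lyapunov bound for the corresponding operator $\L - \lan b_0, D_x\cdot\ran$), then reinsert $b_0$ on $[0,\tau_R]$, $\tau_R = \inf\{t: |Z_t| \ge R\}$, via a Girsanov change of measure driven by the shift $B_0^* Q_0^{-1} b_0$, whose exponent is bounded on $[0,\tau_R]$ by local boundedness of $b_0$ and continuity of $B_0^* Q_0^{-1}$, so Novikov's condition holds. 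Hypothesis \ref{hy1} then ensures $\tau_R \to \infty$ almost surely under the changed measure, producing a global weak (hence martingale) solution. Combined with Theorem \ref{main1}, this yields Theorem \ref{main2}.
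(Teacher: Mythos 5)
Your overall strategy (prove existence of a martingale solution for $(\L,\delta_{z_0})$ at each $z_0$ and then invoke Theorem \ref{main1}) is legitimate, but both of your concrete existence constructions contain a genuine gap, and the gap is exactly what the paper's proof is built to avoid. In the mollification route you correctly identify the problematic limit $\int_0^t \lan b_0^n(Z^n_s)-b_0(Z^n_s), D_x f(Z^n_s)\ran\, ds$ and then abandon it, so that route is incomplete as written; in addition, the claim that $\phi$ remains a Lyapunov function for the regularized operators $\L^n$ with a constant independent of $n$ is unjustified, since $\L^n\phi=\L\phi+\lan b_0^n-b_0, D_x\phi\ran$ and the error term is only locally bounded: inequality \eqref{lia} may hold for $\L$ only through cancellations involving the specific $b_0$, so no bound by $C\phi$ follows from a sup-over-small-balls estimate of $|b_0^n|$. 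In the Girsanov route the very first step fails: Hypothesis \ref{hy1} gives $\L\phi\le C\phi$ for the \emph{full} operator, and dropping $\lan b_0, D_x\phi\ran$ (with $b_0$ merely locally bounded, and $Q_0$, $B_0$ not assumed bounded or of linear growth) does not produce a Lyapunov function for the drift-free operator; even in the standard case, \eqref{diss} controls the sum of the three terms, not each one separately. So you cannot invoke Theorem \ref{sko} for $dZ_t=AZ_t\,dt+B(Z_t)\,dW_t$, and the whole chain collapses at its base.

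The paper circumvents this by localizing \emph{both} $b_0$ and $Q_0$ simultaneously: on $U_k=B(0,k)$ it defines operators ${\cal M}_k$ with $b_k=b_0\psi_k$ bounded and $Q_0^k$ bounded, uniformly elliptic and of linear growth, agreeing with $\L$ on $U_k$. Each ${\cal M}_k$ is well-posed (Theorem \ref{sko} plus Theorem \ref{main1} applied to the drift-free part ${\cal B}_k$, then Girsanov with a globally bounded shift), and the global statement follows from the increasing-localization result, Corollary \ref{exi2}, whose non-explosion condition \eqref{stop214} is verified with the Lyapunov function of $\L$ itself, i.e.\ of the only operator for which Hypothesis \ref{hy1} actually provides one, via optional stopping and Gronwall. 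Note also that Corollary \ref{exi2} is this localization/non-explosion device, not the ``mixture over $\mu$'' construction (that is formula \eqref{es} in Theorem \ref{one}), so your first paragraph misattributes its role. To salvage your Girsanov route you would have to truncate the coefficients as the paper does; the stopping-time patching you sketch as $R\to\infty$ is precisely what Corollary \ref{exi2} formalizes.
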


 The proofs of Theorems \ref{main1} and \ref{main2} are postponed 
 to Section 4. In Section 3
we will concentrate on proving
 that {\it the martingale problem for $\L_1$,
\begin{equation}\label{ll1}
\L_1 f(z) = \frac{1}{2} \Tr(Q_0(z) D^2_x f(z))
+ \lan A z , D f(z)\ran, \;\; \; f \in C^2_K, \; z \in \R^d,
\end{equation}
 is well-posed assuming \eqref{kal}, the continuity of  $B_0: \R^d \to \R^{d_0} \otimes \R^r $ and the additional conditions 
\begin{align}\label{extra}
    \eta |h|^2 \le \lan Q_0 (z)h, h \ran \le \frac{1}{\eta} |h|^2, \;\; h \in \R^{d_0},\;z \in \R^d, \;\; \text{\it for some} \;\; \eta >0,
\end{align}
and \eqref{sup1}
($\L_1$ is a special case of $\L$).}

We finish the section with a technical remark mentioned after Definition \ref{dft}.
\begin{remark} \label{fini1} {
\em There exists a countable
set $H_0 \subset C^{2}_K(\R^d)$ such
 that for any $f \in C^{2}_K(\R^d) $,
we can find  a sequence $(f_k) \subset H_0$ satisfying
\begin{equation}\label{vii}
 \lim_{k \to \infty} ( \| f - f_k \|_{\infty} +   \| \tilde \L f_k -  \tilde \L f \|_{\infty}) =0.
\end{equation}
To prove the assertion consider  the separable Banach space $V = C_0(\R^d)\subset C_b(\R^d)$ consisting of all continuous functions vanishing at infinity  (it is endowed with $\| \cdot\|_{\infty}$).

Then introduce  $\Lambda_n =\{ (f, Df, D^2f ) \}_{f \in C_K^2(B_n)}$, where $C_K^2(B_n) = \{ f \in C_K^2(\R^d)$ with support$(f)$ $ \subset B_n\}$;   $B_n = B(0,n)$ is the open ball of center 0 and radius $n \ge 1$.

Identifying $\R^d \otimes \R^d $ with $\R^{d^2}$ we see that each $\Lambda_n$ is contained in the product metric space $V^{1+ d + d^2}$ which is also separable.
It follows that $\Lambda_n $ is separable and so there exists a countable set $\Gamma_n \subset C_K^2(B_n)$ such that $\{ (f, Df, D^2f ) \}_{f \in \Gamma_n}$ is dense in $\Lambda_n$. For any $f \in C_K^2(B_n)$ we can find a sequence $(f_{k}^n)_{k \ge 1}
\subset \Gamma_n$ such that
\begin{equation}\label{d56}
   \| f - f_k^n \|_{\infty} + \| Df - Df_k^n \|_{\infty} + \| D^2f - D^2f_k^n  \|_{\infty} \to 0,\;\; \text{as} \; k \to \infty.
\end{equation}
Define $H_0 = \cup_{n \ge 1} \Gamma_n$.  If $g \in C^2_K(\R^d)$ then $g \in C_K^2(B_{n_0})$, for some $n_0 \ge 1$, and we can consider $(f_k^{n_0})
\subset C_K^2(B_{n_0})$ such that \eqref{d56} holds with $f$ and $f_k^n$ replaced by $g$  and $f_k^{n_0}$.
Then we obtain easily \eqref{vii} with $f$ and $f_k$ replaced by $g$  and $f_k^{n_0}$ (note that $\| \tilde \L f_k^{n_0} -  \tilde \L g \|_{\infty}$ $= \sup_{|z|\le n_0}|\tilde \L f_k^{n_0}(z) -  \tilde \L g(z) |$).

}
\end{remark}

\section{ The martingale problem for $\L_1 $  under  an additional hypothesis}

\begin{theorem} \label{cons} Let us consider $\L_1$
in \eqref{ll1}  assuming  Hypothesis \ref{hy}
and also \eqref{extra} for some $\eta >0$.
There exists a positive constant $\gamma = \gamma (A, d_0,  \eta,  d)$ such that if
\begin{equation}\label{sup1}
\sup_{z \in \R^d} \| Q_0(z) - \hat Q_0 \| < \gamma,
\end{equation}
for some positive definite  symmetric matrix
 $\hat Q_0 \in \R^{d_0} \otimes \R^{d_0}$
 such that
 ${\eta} |h|^2 \le \lan \hat Q_0 h,h \ran \le {\frac{1}{\eta}}
  |h|^2,$ $ h \in \R^{d_0}$,
 then
  the martingale problem for $\L_1$
is well-posed.
  \end{theorem}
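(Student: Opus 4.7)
The plan is to adapt the Stroock-Varadhan perturbation scheme to the hypoelliptic OU setting, using the global $L^p$-estimates of Theorem \ref{BCLP} as the analytic input. Existence of a martingale solution for any initial distribution $\mu\in\mathcal P(\R^d)$ is immediate from Theorem \ref{sko} applied with the Lyapunov function $\phi(z)=1+|z|^2$: since $Q_0$ is uniformly bounded by \eqref{extra} and the drift $Az$ is linear, one has $\L_1\phi(z)\le C(1+|z|^2)$. So I concentrate on uniqueness in law.

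First I would split $\L_1 = \hat\L_0 + M$, where $\hat\L_0 u(z) = \tfrac12 \Tr(\hat Q_0\, D_x^2 u) + \lan Az, Du\ran$ is the constant-coefficient hypoelliptic OU generator and
\begin{equation*}
 M u(z) \;=\; \tfrac12 \Tr\bigl((Q_0(z)-\hat Q_0)\, D_x^2 u(z)\bigr)
\end{equation*}
is a second-order perturbation whose pointwise norm is controlled by $\|Q_0-\hat Q_0\|_\infty<\gamma$. The resolvent $\hat R_\lambda := (\lambda-\hat\L_0)^{-1}$ is given explicitly through the OU semigroup and its Gaussian transition density, and by Theorem \ref{BCLP} it satisfies the global bound $\|D_x^2 \hat R_\lambda f\|_{L^p} \le C_\lambda \|f\|_{L^p}$ for a suitable $p\in(1,\infty)$. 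Consequently $\|M \hat R_\lambda f\|_{L^p} \le \tfrac12\gamma C_\lambda \|f\|_{L^p}$, so for $\gamma$ sufficiently small (fixing the threshold $\gamma(A,d_0,\eta,d)$) $M\hat R_\lambda$ is a strict contraction on $L^p(\R^d)$.

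Given any martingale solution $Z$ of $(\L_1,\delta_x)$, set $U_\lambda g(x) := E\int_0^\infty e^{-\lambda t} g(Z_t)\,dt$. For $f\in C^\infty_K$, hypoellipticity of $\hat\L_0-\partial_t$ makes $\hat R_\lambda f$ smooth and bounded, so It\^o's formula applied to $e^{-\lambda t}\hat R_\lambda f(Z_t)$ on $[0,T]$, followed by $T\to\infty$, yields the key identity
\begin{equation*}
 U_\lambda f(x) \;=\; \hat R_\lambda f(x) \;+\; U_\lambda\bigl(M \hat R_\lambda f\bigr)(x).
\end{equation*}
Iterating $n$ times produces $U_\lambda f = \sum_{k=0}^{n-1} \hat R_\lambda (M\hat R_\lambda)^k f + U_\lambda (M\hat R_\lambda)^n f$. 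Combined with a Krylov-type bound $|U_\lambda g(x)|\le C_x\|g\|_{L^p}$ and the contraction property of $M\hat R_\lambda$, the remainder vanishes as $n\to\infty$ and one obtains the canonical Neumann representation
\begin{equation*}
 U_\lambda f(x) \;=\; \sum_{k\ge 0}\hat R_\lambda(M\hat R_\lambda)^k f(x),
\end{equation*}
depending only on $\L_1$, $x$, $\lambda$ and $f$. Laplace inversion pins down $t\mapsto E f(Z_t)$ uniquely, and standard martingale-problem arguments (cf.\ Theorem 4.4.2 of \cite{EK}) promote uniqueness of one-dimensional marginals to uniqueness in law.

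The main obstacle is the Krylov-type estimate $|U_\lambda g(x)|\le C_x\|g\|_{L^p}$, which in this degenerate setting is not available off the shelf. My plan is to establish it first for the unperturbed process by writing $\hat U_\lambda g(x) = \int_0^\infty e^{-\lambda t}\hat P_t g(x)\,dt$ and controlling $\|\hat p_t(x,\cdot)\|_{L^{p'}}$ via \eqref{qtt2}: the covariance $\hat Q_t$ is nondegenerate for $t>0$, with a $t\downarrow 0$ degeneracy of polynomial order governed by the H\"ormander index $k$ in \eqref{kal}, so $e^{-\lambda t}\|\hat p_t(x,\cdot)\|_{L^{p'}}$ is integrable on $(0,\infty)$ provided $p$ is chosen large enough. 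I would then bootstrap to $U_\lambda$ using the same resolvent identity and the contraction property of $M\hat R_\lambda$. The interlocking choice of $p$ (governed jointly by the Gaussian Krylov bound and by the BCLP $L^p$ range) and of $\gamma=\gamma(A,d_0,\eta,d)$ is the delicate point on which the whole argument hinges.
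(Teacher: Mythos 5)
Your overall strategy coincides with the paper's: the same decomposition $\L_1=\hat\L_0+M$ with $M u=\tfrac12\Tr((Q_0-\hat Q_0)D^2_xu)$, the same resolvent identity $U_\lambda f=\hat R_\lambda f+U_\lambda(M\hat R_\lambda f)$ (the paper's \eqref{chi1}), the same use of the global estimate $\|D^2_x\hat R_\lambda f\|_p\le C\|f\|_p$ from Theorem \ref{BCLP} via Theorem \ref{PP}, the same Gaussian/Kalman computation for the unperturbed Krylov bound (the paper's Lemma \ref{nu0}), and the same Laplace-inversion plus regular-conditional-probability conclusion. Whether one iterates to a Neumann series or subtracts two solutions and shows the difference functional has norm zero is immaterial.

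The genuine gap is in your route to the Krylov-type estimate $|U_\lambda g(z)|\le C\|g\|_{L^p}$ for the \emph{variable-coefficient} solution. You propose to ``bootstrap to $U_\lambda$ using the same resolvent identity and the contraction property of $M\hat R_\lambda$,'' i.e.\ to absorb via $\|U_\lambda\|_{L(L^p;\R)}\le C+\theta\|U_\lambda\|_{L(L^p;\R)}$ with $\theta<1$. This absorption is vacuous unless you already know $\|U_\lambda\|_{L(L^p;\R)}<\infty$, and for a general martingale solution of $\L_1$ there is no a priori reason for this functional even to be well defined on $L^p$ (the law of $Z_t$ need not have a density with any quantitative control). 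The paper circumvents this with the Ikeda--Watanabe device (Theorem \ref{qvar}): it replaces $Z$ by processes $Z^m$ whose diffusion coefficient is frozen on dyadic time intervals, so that conditioning on $\mathcal F_{k/2^m}$ reduces each piece to a constant-coefficient OU process and Lemma \ref{nu0} gives $\|V_m(\lambda,z)\|_{L(L^p;\R)}\le m2^mC<\infty$; only then does the resolvent identity plus smallness of $\gamma$ yield a bound \emph{uniform in} $m$, and the limit $m\to\infty$ transfers it to $Z$. Without this (or some substitute providing a priori finiteness), your argument does not close. Two secondary points: the constant in $\|D^2_x\hat R_\lambda f\|_p\le C\|f\|_p$ must be independent of $\lambda$ for $\lambda>\lambda_0$ (the paper stresses this; your notation $C_\lambda$ obscures it, and a $\lambda$-dependent constant would force $\gamma=\gamma(\lambda)$ and break the Laplace inversion over all large $\lambda$); and the identity obtained ``by It\^o's formula'' must in fact be derived from the martingale property for $C^2_K$ test functions and extended to $\hat R_\lambda f\in C^2_b$ by approximation, since a martingale solution is not a priori an It\^o process.
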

   To prove the result we need some
   analytic regularity results for $\L_1$ when $Q_0(z)$ is constant.
\subsection{Analytic regularity results for  hypoelliptic OU operators }

 Let us consider the OU operator
 \begin{equation}\label{ou3}
 \L_0 f(z) = \frac{1}{2} \Tr(Q D^2 f(z))
+ \lan A z , D f(z)\ran =
 \frac{1}{2} \Tr(Q_0 D^2_x f(z))
+ \lan A z , D f(z)\ran, \;\; \; f \in C^{2}_K,
\end{equation}
 $z \in \R^d,$ where
$
Q  = \begin{pmatrix}
 Q_0 & 0\\
 0 & 0
\end{pmatrix},
  $ and $Q_0$ is a symmetric  positive definite $d_0 \times d_0$ matrix such that
 \begin{align} \label{qoo}
   {\eta} |h|^2 \le \lan  Q_0 h,h \ran \le \frac{1}{\eta} |h|^2,\;\;\; h \in \R^{d_0},
  \end{align}
  for some $\eta>0$.  The associated OU process starting at $z \in \R^d$  solves the SDE
 \begin{equation}\label{ou34}
 Z_t^z =  z + \int_0^t A Z_s^z ds
   \, + \,
 \int_0^t \sqrt{Q}  \,  d W_s,\, \;\; t \ge 0.
 \end{equation}
 The corresponding Markov semigroup is given by
\begin{equation}\label{ou341}
P_t f(z) = E[f(Z_t^z)] = \int_{\R^d} f(e^{tA}z + y) N(0,Q_t) dy,
\end{equation}
where $f \in B_b(\R^d)$, $z \in \R^d$ and $N(0,Q_t)$
is the Gaussian measure with mean 0 and covariance operator $Q_t$
\begin{equation}\label{qtt}
Q_t = \int_0^t e^{sA} Q e^{sA^*} ds, \;\; t \ge 0.
\end{equation}
{\it We assume  that $Q_t$  is positive definite, for any $t>0$} (cf. \eqref{qtt2}).

We will investigate regularity properties of  the resolvent
$R(\lambda, \L_0) $ which is defined by
\begin{equation}\label{reou}
  R(\lambda, \L_0)  f(z)  = \int_{0}^{+\infty} e^{- \lambda t} E [f(Z_t^z)] dt = \int_{0}^{+\infty} e^{- \lambda t} P_t f(z) dt,\;\; f \in C^{2}_K(\R^d),
\end{equation}
 $\lambda >0$, $z \in \R^d$.
 Our starting point
 is the following regularity  result  proved in \cite{BCLP} (a previous result for
  non-degenerate OU operators was established in \cite{MPRS}).

\begin{theorem} \label{BCLP}Let $p \in (1, \infty)$.
  Let us consider the hypoelliptic OU operator $\L_0$ (i.e., we are assuming \eqref{qoo} and \eqref{kal} or \eqref{qtt2}$)$.
There exists  $C= C(\eta, A, d_0, d,p)$ such that, for any $v \in C^{\infty}_K(\R^d)$,  we have
\begin{equation}\label{stim0}
    \| D^2_x   v \|_p \le C (\| \L_0 v\|_p + \| v\|_p).
\end{equation}
 \end{theorem}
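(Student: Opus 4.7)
The plan is to realise $D^2_x v$ as a Calderón-Zygmund type operator applied to $\lambda v - \L_0 v$ for a fixed $\lambda > 0$, and to establish the $L^p$-boundedness of this operator by exploiting the fact that $\L_0 - \partial_t$ is left invariant with respect to a suitable Lie group structure on $\R^{d+1}$ (see \cite{LP}). For any $v \in C^{\infty}_K(\R^d)$ one has the identity
\[
v = R(\lambda, \L_0)(\lambda v - \L_0 v),
\]
so it suffices to prove that, for some $\lambda > 0$, the operator $T_\lambda f := D^2_x R(\lambda, \L_0) f$ extends to a bounded map on $L^p(\R^d)$ with norm $C = C(\eta, A, d_0, d, p)$. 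The estimate \eqref{stim0} then follows by applying $T_\lambda$ to $f = \lambda v - \L_0 v$ and absorbing the $\lambda \|v\|_p$ contribution on the right-hand side.

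First I would derive an explicit kernel representation for $T_\lambda$. By Mehler's formula \eqref{ou341}, writing $g_t(u) = (2\pi)^{-d/2}(\det Q_t)^{-1/2} \exp\bigl(-\tfrac{1}{2}\lan Q_t^{-1} u, u\ran\bigr)$ and changing variables $w = e^{tA} z + y$, one gets $P_t f(z) = \int_{\R^d} f(w)\, g_t(w - e^{tA} z)\, dw$. Differentiating twice under the integral with respect to the first $d_0$ coordinates of $z$ produces a kernel of the form $K_t(z,w) = \Pi_t(z,w)\, g_t(w - e^{tA} z)$, where $\Pi_t(z,w)$ is a quadratic polynomial in the entries of $e^{tA}$ and of the vector $Q_t^{-1}(w - e^{tA}z)$. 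Integrating against $e^{-\lambda t}$ yields the kernel $K_\lambda(z,w) = \int_0^{+\infty} e^{-\lambda t} K_t(z,w)\, dt$ representing $T_\lambda$.

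The crux is to prove that $K_\lambda$ defines an $L^p$-bounded operator, and this is where the geometric input enters. Following \cite{LP}, I would introduce the Lie group operation $\circ$ on $\R^{d+1}$ for which $\L_0 - \partial_t$ is left invariant, together with an associated quasi-distance $d_{\mathbb{G}}$; Lebesgue measure is doubling in $(\R^{d+1}, d_{\mathbb{G}})$, making it a space of homogeneous type in the sense of Coifman-Weiss. One can then rewrite $K_\lambda$ as a left-convolution kernel on $\mathbb{G}$ (plus lower-order remainders coming from the affine action $z \mapsto e^{tA}z$) and verify the usual Calderón-Zygmund size and cancellation conditions in the quasi-distance $d_{\mathbb{G}}$, so as to invoke the general singular-integral $L^p$ boundedness theorem on spaces of homogeneous type.

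The main obstacle is precisely this last step: in general $\mathbb{G}$ is \emph{not} a homogeneous Lie group, i.e.\ there need not exist a family of dilations acting as automorphisms of $\mathbb{G}$ compatible with $\L_0 - \partial_t$ (this fails as soon as $A$ has a non-trivial non-nilpotent part), so the classical homogeneity-scaling arguments break down. I would handle this by splitting the $t$-integral into a short-time region $t \le 1$ and a long-time region $t \ge 1$: the long-time contribution is absorbed by the exponential decay $e^{-\lambda t}$ and the boundedness of $e^{tA}Q_1^{-1}$-type factors away from $t = 0$, while the short-time piece, where the true singularity of $g_t$ lives, must be controlled by sharp pointwise estimates on $g_t$ and its derivatives, carefully exploiting the positive definiteness of $Q_t$ guaranteed by \eqref{qtt2} together with the precise rate of degeneration of $Q_t$ as $t \to 0^+$ dictated by the Kalman index $k$ in \eqref{kal}.
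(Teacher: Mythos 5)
First, a point of comparison: this paper does not prove Theorem \ref{BCLP} at all --- it is imported verbatim from \cite{BCLP}, where its proof occupies an entire article. So the relevant question is whether your sketch would stand on its own as a proof, and it would not: it is a correct \emph{roadmap} (and indeed broadly the roadmap that \cite{BCLP} follows --- left invariance of $\L_0-\partial_t$ under the Lie group of \cite{LP}, singular-integral theory on a space of homogeneous type, and a special treatment of the lack of dilation-homogeneity when $A$ is not of the special nilpotent block form), but the entire analytic content of the theorem is concentrated in the step you yourself flag as ``the main obstacle'' and then do not carry out. Saying that one should ``verify the usual Calder\'on--Zygmund size and cancellation conditions in the quasi-distance $d_{\mathbb{G}}$'' is a statement of the problem, not a solution: the needed pointwise bounds on $g_t$ and $D^2_x g_t$ must be uniform in the correct anisotropic geometry, where the different components of $w-e^{tA}z$ degenerate at the different rates $t^{1/2}, t^{3/2},\dots,t^{k+1/2}$ encoded by the Kalman index $k$, and the cancellation (H\"ormander) condition for the second-derivative kernel is exactly where the hypoellipticity assumption \eqref{kal}/\eqref{qtt2} is consumed. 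Splitting the $t$-integral at $t=1$ disposes of the easy long-time part but leaves the short-time singularity, which is the whole theorem.

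Two further concrete gaps. (1) A Calder\'on--Zygmund theorem on a space of homogeneous type gives $L^p$ boundedness only once boundedness on some $L^{p_0}$ (typically $p_0=2$) is established; your sketch never produces that input, and for a non-self-adjoint, non-translation-invariant operator with unbounded drift this is not automatic (in the literature it is obtained from the dilation-invariant \emph{principal part} of $\L_0-\partial_t$, to which the full operator must first be reduced by a perturbation/freezing argument that your outline omits). (2) The reduction $v=R(\lambda,\L_0)(\lambda v-\L_0 v)$ is fine, but note that it reproves Theorem \ref{PP} of the paper as an intermediate step, whereas the paper's logical order is the reverse: it \emph{uses} the a priori bound \eqref{stim0} of \cite{BCLP}, extends it from $C^\infty_K$ to $C^2_K$ by mollification, and only then derives the resolvent estimate \eqref{stim1} by a cut-off argument. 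Your route is not circular, but it places the full burden of the singular-integral analysis on the resolvent kernel $K_\lambda$, for which none of the required estimates are actually supplied.
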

The previous result allows to prove
\begin{theorem} \label{PP}
Let us consider the hypoelliptic OU operator $\L_0$.
 Let $p \in (1, \infty)$.
There exists $\lambda_0= \lambda_0(A,p, d)>0$
and $C= C(\eta, A, d_0, d,p)$ such that, for any $f \in C^{2}_K(\R^d)$,
$\lambda > \lambda_0$, we have
\begin{equation}\label{stim1}
    \| D^2_x R(\lambda, \L_0)  f \|_p \le C \| f\|_p.
\end{equation}
 \end{theorem}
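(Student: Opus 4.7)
\medskip

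The plan is to combine the Calder\'on--Zygmund estimate \eqref{stim0} of Theorem~\ref{BCLP} with straightforward semigroup bounds on $R(\lambda, \L_0)$ itself. The two inputs we need beyond \eqref{stim0} are (a) an $L^p$-contractivity-type bound for $P_t$, and (b) the identification of $\L_0 R(\lambda, \L_0) f$ via the standard resolvent identity.

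First I would bound the semigroup on $L^p$. Since $N(0,Q_t)$ is a probability measure, Jensen's inequality gives $|P_t f(z)|^p \le P_t(|f|^p)(z)$. Integrating in $z$, applying Fubini, and performing the change of variable $w = e^{tA}z + y$ (whose Jacobian contributes $e^{-t\,\text{tr}(A)}$) yields
\[
\|P_t f\|_p \;\le\; e^{-t\,\text{tr}(A)/p} \,\|f\|_p.
\]
Setting $\lambda_0 := \max(0,-\text{tr}(A)/p)$ and integrating in $t$ gives
\[
\|R(\lambda,\L_0) f\|_p \;\le\; \frac{\|f\|_p}{\lambda + \text{tr}(A)/p} \;\le\; C_\lambda\|f\|_p \qquad (\lambda>\lambda_0).
\]
For $f \in C^2_K$ the function $v := R(\lambda,\L_0)f$ is smooth (differentiation under the integral in \eqref{ou341} is justified by the positive definiteness of $Q_t$ for $t>0$ together with the Gaussian decay), and a direct computation using $P_t$ as a semigroup yields the pointwise identity $\L_0 v = \lambda v - f$, which together with the previous inequality gives $\|\L_0 v\|_p \le C'_\lambda\|f\|_p$.

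At this point I would invoke Theorem~\ref{BCLP} applied to $v$: formally, \eqref{stim0} gives $\|D^2_x v\|_p \le C(\|\L_0 v\|_p + \|v\|_p) \le C''\|f\|_p$, which is exactly \eqref{stim1}. The only nontrivial step is that Theorem~\ref{BCLP} is stated for $v \in C_K^\infty(\R^d)$, whereas our $v$ is smooth but not compactly supported. To bridge this gap I would use a cutoff--approximation: take $\chi_n \in C_K^\infty$ with $\chi_n \equiv 1$ on $B_n$, $\operatorname{supp}\chi_n \subset B_{2n}$, $|D\chi_n|\le C/n$, $|D^2\chi_n|\le C/n^2$, and apply \eqref{stim0} to $v_n := \chi_n v \in C_K^\infty$. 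Using the product rule,
\[
\L_0 v_n \;=\; \chi_n \L_0 v \,+\, \langle Q_0 D_x\chi_n, D_x v\rangle \,+\, \tfrac12 v\,\text{Tr}(Q_0 D^2_x\chi_n) \,+\, v\,\langle Az, D\chi_n\rangle,
\]
and all error terms are supported on the shell $\{n\le|z|\le 2n\}$. The drift correction $v\langle Az, D\chi_n\rangle$ is pointwise bounded by $C\|A\|\,|v|$ on this shell, so its $L^p$-norm tends to $0$ as $n\to\infty$ by dominated convergence; the second-order correction carries a factor $1/n^2$; passing to the limit via Fatou's lemma on $D^2_x v_n$ and the distributional convergence $D^2_x v_n \to D^2_x v$ then gives the desired estimate.

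The genuine obstacle is the first-order cross-term $\langle Q_0 D_x\chi_n, D_x v\rangle$, whose control requires an a priori $L^p$-bound on $D_x v$ that is not directly available. I would handle this in one of two equivalent ways: either prove a preliminary first-order interpolation $\|D_x v\|_p \le \varepsilon\|D^2_x v\|_p + C_\varepsilon\|v\|_p$ and absorb, or appeal to the standard fact that $C_K^\infty$ is a core for $\L_0$ viewed as the generator of $P_t$ on $L^p(\R^d)$, in which case \eqref{stim0} extends directly by density in the graph norm to every element of the $L^p$-domain of $\L_0$, and in particular to $v = R(\lambda,\L_0)f$. Either route yields \eqref{stim1} with a constant depending only on $\eta, A, d_0, d, p$.
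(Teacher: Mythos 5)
Your overall strategy is the same as the paper's: combine the a priori estimate \eqref{stim0} with the $L^p$ bounds $\|P_tf\|_p\le e^{-t\,\mathrm{Tr}(A)/p}\|f\|_p$ and $\|R(\lambda,\L_0)f\|_p\le C\lambda^{-1}\|f\|_p$, identify $\L_0 v=\lambda v-f$, and bridge the lack of compact support of $v=R(\lambda,\L_0)f$ by a cutoff. However, there are two concrete gaps. First, your claim that $v$ is \emph{smooth} for $f\in C^{2}_K(\R^d)$ is unjustified and in general false in the degenerate case: differentiating under the integral in the Mehler formula costs factors of $\|Q_t^{-1/2}\|\sim t^{-(2k+1)/2}$ in the degenerate directions, which are not integrable near $t=0$, so one only gets $v\in C^2_b(\R^d)$ (by letting at most two derivatives fall on $f$). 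Consequently $v_n=\chi_n v$ lies in $C^2_K$, not $C^\infty_K$, and \eqref{stim0} as stated in Theorem~\ref{BCLP} does not apply to it. The paper closes exactly this gap in a separate preliminary step, extending \eqref{stim0} from $C^\infty_K$ to $C^2_K$ by mollification (the only delicate point being the unbounded drift $\langle Az,Dv_n\rangle$, handled by splitting $\langle Az-Aw,Dv(w)\rangle+\langle Aw,Dv(w)\rangle$ under the convolution). Your proposal omits this step.

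Second, the cross term $\langle Q_0 D_x\chi_n,D_xv\rangle$, which you correctly identify as the main obstacle, has a direct and elementary resolution that you miss: from $\langle DP_tf(z),h\rangle=P_t(\langle Df(\cdot),e^{tA}h\rangle)(z)$ one gets $\|Dv\|_p\le C\lambda^{-1}\|Df\|_p<\infty$ for $f\in C^2_K$ (estimate \eqref{sti2} of Lemma~\ref{nu1}); since the cross term carries the factor $1/n$, it then vanishes as $n\to\infty$ and no absorption is needed. Of your two proposed substitutes, the interpolation-plus-absorption route is circular as written (you would be absorbing $\varepsilon\|D^2_xv\|_p$ without knowing a priori that this quantity is finite, and the left-hand side of your inequality is $\|D^2_xv_n\|_p$, not $\|D^2_xv\|_p$), and the core route rests on the assertion that $C^\infty_K$ is a core for the degenerate OU generator on $L^p(\R^d,dz)$, which is not an off-the-shelf fact here and whose proof would require essentially the same cutoff and gradient estimates you are trying to avoid. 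With the $C^2_K$ extension of \eqref{stim0} and the gradient bound \eqref{sti2} supplied, your argument matches the paper's.
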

Before proving the theorem we establish two lemmas of independent interest.

\begin{lemma} \label{nu0} Let us consider the OU resolvent
 given in \eqref{reou} with $Q$ as in \eqref{qoo} and $A$ which
  satisfies \eqref{kal} . Let $f \in C^2_K(\R^d)$.
  There exists $\hat p = \hat p (\eta, d, d_0 , A) \ge 1$ such that if
 $p > \hat p$ then
\begin{equation}\label{d6}
    \sup_{z \in \R^d} | R(\lambda, \L_0) f(z) | \le
    \sup_{z \in \R^d} \int_{0}^{+\infty} e^{- \lambda t} |P_t f(z)| dt
 \le    C \| f\|_{p},\;\;\; \lambda > 0,
\end{equation}
 with
  $C = C(p,\eta, d, d_0, A) >0$ independent of $f$.
\end{lemma}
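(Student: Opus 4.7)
The plan is to express $P_t f(z)$ as an integral against the Gaussian density of $N(0,Q_t)$, apply H\"older's inequality to bring $\|f\|_p$ outside the integral, and then integrate the resulting pointwise bound against $e^{-\lambda t}$. The only non-routine input is a polynomial lower bound on $\det Q_t$ near $t=0$ that comes from the Kalman condition \eqref{kal}.

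Let $g_t$ denote the density of $N(0,Q_t)$. The change of variable $w = e^{tA}z + y$ in \eqref{ou341} gives
$$P_t f(z) = \int_{\R^d} f(w)\, g_t\bigl(w - e^{tA}z\bigr)\, dw,$$
so H\"older's inequality and translation invariance of Lebesgue measure yield $|P_t f(z)| \le \|f\|_p\,\|g_t\|_q$ for $1/p+1/q=1$. A direct Gaussian computation (substitution $y = q^{-1/2}Q_t^{1/2}u$) gives
$$\|g_t\|_q = c_{p,d}\,(\det Q_t)^{-(q-1)/(2q)} = c_{p,d}\,(\det Q_t)^{-1/(2p)},$$
using $(q-1)/q = 1/p$. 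Inserting this into \eqref{reou},
$$\sup_{z \in \R^d}\int_0^\infty e^{-\lambda t}|P_t f(z)|\,dt \le c_{p,d}\,\|f\|_p \int_0^\infty e^{-\lambda t}\,(\det Q_t)^{-1/(2p)}\,dt,$$
and it suffices to show that the last integral is finite for $p$ large enough, uniformly in $\lambda > 0$.

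We split the time integral at $t=1$. For $t \ge 1$, the monotonicity $Q_t - Q_1 = \int_1^t e^{sA}Q e^{sA^*}\,ds \ge 0$ gives $\det Q_t \ge \det Q_1 > 0$, so the tail is dominated by a constant times $\int_1^\infty e^{-\lambda t}\,dt$, which is finite for every $\lambda > 0$. For $t \in (0,1]$, under \eqref{kal} and \eqref{qoo} the classical hypoelliptic lower bound
$$\det Q_t \ge c_1\, t^N, \qquad t \in (0,1],$$
holds with $N = N(A,d,d_0) \in \N$ and $c_1 = c_1(\eta,A,d,d_0) > 0$. Then
$$\int_0^1 (\det Q_t)^{-1/(2p)}\,dt \le c_1^{-1/(2p)}\int_0^1 t^{-N/(2p)}\,dt < \infty$$
whenever $N/(2p) < 1$, i.e.\ $p > N/2$. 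Taking $\hat p := N/2 \vee 1$ proves \eqref{d6} with a constant $C = C(p,\eta,d,d_0,A)$ independent of $\lambda$ and of $f$.

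The main obstacle is the polynomial lower bound $\det Q_t \ge c_1 t^N$; one standard way to obtain it is to pass to Kalman canonical coordinates, where $Q_t$ becomes block triangular with $i$-th diagonal block of order $t^{2i+1}$ times a positive definite matrix, giving $\det Q_t \asymp t^{\sum_i (2i+1)\,m_i}$ with multiplicities $m_i$ controlled by the integer $k$ in \eqref{kal}. Every other step is a routine H\"older plus Gaussian integration, and the fact that the estimate holds for all $\lambda > 0$ (and not merely for $\lambda$ large) is automatic because $e^{-\lambda t}$ is only used to handle the tail $t \ge 1$, where the Gaussian factor is already bounded.
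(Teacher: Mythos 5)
Your proof is correct and takes essentially the same route as the paper: H\"older's inequality against the Gaussian density gives $|P_tf(z)|\le c_p\,(\det Q_t)^{-1/(2p)}\|f\|_p$, and the Kalman condition supplies a polynomial lower bound $\det Q_t\ge c_1 t^N$ near $t=0$, after which the time integral converges for $p$ large. The only cosmetic difference is the source of that lower bound: the paper first replaces $Q_0$ by $\eta I_0$ (so that $Q_t\ge Q_t^{\eta}$) and then invokes Seidman's asymptotics $\|(Q_t^{\eta})^{-1}\|\sim c\,t^{-(2k+1)}$, which is also the cleanest way to guarantee that your constant $c_1$ depends on $Q_0$ only through $\eta$ --- a uniformity that matters later in Theorem \ref{qvar}.
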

\begin{proof} (i) By changing variable and using H\"older inequality we find,
for $p \ge 1$, $t>0$, $z \in \R^d$,
$$
|P_t f(z)| =  \Big| c_d \int_{\R^d} f(e^{tA}z + \sqrt{Q_t} \, y) e^{- \frac{|y|^2}{2}
}
dy \Big|
$$$$
\le c_p  \Big (  \int_{\R^d} |f(e^{tA}z + \sqrt{Q_t} \, y)|^p  dy \Big)^{1/p}
= \frac{c_p}{ (\text{det}(Q_t))^{1/2p} }
  \Big (  \int_{\R^d} |f(e^{tA}z + w)|^p  dw \Big)^{1/p}
$$
$$
 = \frac{c_p}{ (\text{det}(Q_t))^{1/2p} } \| f\|_p.
$$
with $c_p$ independent of $z$. Setting
$u_{\lambda} = R(\lambda, \L_0) f$ we find
$$
 \| u_{\lambda}\|_{\infty} \le \sup_{z \in \R^d}
 \int_{0}^{+\infty} e^{- \lambda t} |P_t f(z)| dt
\le c_p \| f\|_p
\int_{0}^{+\infty} e^{- \lambda t} \frac{1}{ (\text{det}(Q_t))^{1/2p} } dt.
$$
Now we need to estimate   $\text{det}(Q_t)$, for $t>0,$  with  a constant possibly depending on $\eta$ (see \eqref{qoo}). We have
\begin{equation} \label{f56}
 \lan Q_t h,h \ran =   \int_0^t \lan Q e^{sA^*}h, e^{sA^*}h \ran ds \ge \int_0^t \lan I_{\eta} e^{sA^*}h, e^{sA^*}h \ran ds = \lan Q_t^{\eta} h,h \ran, \;\; h \in \R^d,
\end{equation}
where $I_{\eta}
  = \begin{pmatrix}
 \eta I_{0} & 0\\
 0 & 0
\end{pmatrix},
  $ with $I_0$ the $d_0 \times d_0$-identity matrix, and
$$
Q_t^{\eta} = \int_0^t e^{sA} I_{\eta} \, e^{sA^*} ds.
$$
Condition (ii) in Hypothesis \ref{hy} is equivalent to the  controllability Kalman condition
$$
 \text{rank}[B, AB, \ldots, A^{k}B ] =d,
$$
with $B =  I_{\eta}$. This is also equivalent to the fact that $Q_t^{\eta}$ is positive definite for any $t>0$ (see, for instance, Chapter I.1 in \cite{Za}).

Now  we use a result in \cite{seidman} (see also Lemma 3.1 in \cite{Lu}).
According to formulae (1.4) and (2.6) in \cite{seidman} (in  \cite{seidman} $Q_t^{\eta}$ is denoted by $W_t$) we have
$$
\| (Q_t^{\eta})^{-1}\| \sim \frac{c_1}{t^{2k +1}} \;\;\; \text{as} \; t \to 0^+.
$$
It follows that $\lan Q_t^{\eta} h, h \ran \ge c \, t^{2k +1}$, $t \in (0,1)$, $ |h| =1$. Using \eqref{f56} we easily obtain
\begin{align}\label{l1}
  \text{det}(Q_t)   \ge C t^{2k +1},\;\; t \in (0,1),
\end{align}
where
  $C = C(\eta,A, d_0, d)$. On the other hand,
  $\text{det}(Q_t) \ge$ $ \text{det}(Q_1) \ge C $, $t \ge 1$. It follows that
  $$
  \| u_{\lambda}\|_{\infty} \le \sup_{z \in \R^d}
 \int_{0}^{+\infty} e^{- \lambda t} |P_t f(z)| dt
\le c_p \| f\|_p
\int_{0}^{+\infty} \frac{C' e^{- \lambda t}}{ (t^{2k +1} \wedge 1)^{1/2p} } dt,
  $$
 $C' = C'(p,\eta,A, d_0, d) $. By choosing $p$ large enough we get easily assertion \eqref{d6}.
\end{proof}

\begin{lemma} \label{nu1} Assume the same assumptions of Lemma \ref{nu0}
and let $f \in C^2_K(\R^d)$.
 Then, for any $p \ge 1$ there exists $\lambda_0 = \lambda_0 (p, d , A) >0 $, and
  $C = C(p, d, A) >0$
  such that
\begin{equation}\label{sti1}
    \| R(\lambda, \L_0) f \|_p \le
        \frac{C}{\lambda} \| f\|_{p},
\end{equation}
\begin{equation}\label{sti2}
    \| D R(\lambda, \L_0) f \|_p \le
        \frac{C}{\lambda}  \| Df\|_{p}\, ,\;\;\; \| D^2 R(\lambda, \L_0) f \|_p
         \le
        \frac{C}{\lambda}  \| D^2f\|_{p}\, , \;\;\;  \lambda > \lambda_0.
\end{equation}
 Moreover,
 for any $\lambda > \lambda_0$ the function $u_{\lambda}= R(\lambda, \L_0) f
 \in C^2_b (\R^d)$  is the unique bounded classical solution to
 \begin{align}\label{eq}
    \lambda u - \L_0 u =f
 \end{align}
 on $\R^d$. Finally, we have, for $\lambda > \lambda_0$,  with $C = (p, d , A)$,
 \begin{align}\label{se}
   \lambda \| u_{\lambda} \|_p + \| \L_0 u_{\lambda}\|_p \le C \| f\|_p.
 \end{align}
  \end{lemma}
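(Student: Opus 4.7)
My plan is to work directly with the explicit Mehler formula \eqref{ou341} and reduce every bound to an $L^p$ contractivity estimate for $P_t$ plus the operator-norm growth of $e^{tA^*}$. The key calculation is the $L^p$ bound on the semigroup: using $P_t f(z)=\int f(e^{tA}z+y)\,N(0,Q_t)(dy)$, Jensen's inequality in the Gaussian variable, the Fubini theorem and the change of variables $w=e^{tA}z$ (whose Jacobian determinant is $e^{t\,\mathrm{tr}(A)}$), I would obtain
\[
\|P_t f\|_p \le e^{-t\,\mathrm{tr}(A)/p}\,\|f\|_p,\qquad t\ge 0,\;p\ge 1.
\]
Since $\lambda\,u_\lambda=\int_0^\infty \lambda e^{-\lambda t}P_t f\,dt$, integrating this inequality against $e^{-\lambda t}$ yields $\|u_\lambda\|_p\le \frac{1}{\lambda+\mathrm{tr}(A)/p}\|f\|_p\le \frac{C}{\lambda}\|f\|_p$ as soon as $\lambda>\lambda_0:=2|\mathrm{tr}(A)|/p$, which is estimate \eqref{sti1}.

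For \eqref{sti2}, I would differentiate under the integral sign in the Mehler formula. A direct computation gives
\[
DP_t f(z)=e^{tA^*}P_t(Df)(z),\qquad D^2 P_t f(z)=e^{tA^*}P_t(D^2 f)(z)\,e^{tA}.
\]
Combined with the pointwise matrix bound $\|e^{tA}\|,\|e^{tA^*}\|\le M e^{\omega t}$ for some constants $M,\omega$ depending only on $A$, and with the $L^p$-contractivity just proved, this gives
\[
\|DP_t f\|_p\le Me^{\omega t}e^{-t\,\mathrm{tr}(A)/p}\|Df\|_p,\quad \|D^2 P_tf\|_p\le M^2 e^{2\omega t}e^{-t\,\mathrm{tr}(A)/p}\|D^2 f\|_p.
\]
Integrating against $e^{-\lambda t}$ then produces $C/\lambda$ provided $\lambda>\lambda_0$ is chosen larger than, say, $4(\omega+|\mathrm{tr}(A)|/p)$; this is where the dependence $\lambda_0=\lambda_0(p,d,A)$ comes from.

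The regularity claim $u_\lambda\in C^2_b(\R^d)$ and the identity $\lambda u_\lambda-\L_0 u_\lambda=f$ follow from the fact that for $f\in C^2_K$ one has $P_t f\in C^2_b$ uniformly in $t$ on compact time intervals, that $\partial_t P_tf=\L_0 P_tf=P_t\L_0 f$ (the standard Kolmogorov backward equation for the OU semigroup, with all derivatives justified by dominated convergence using the Mehler formula), and from an integration by parts in $t$ applied to $\int_0^\infty e^{-\lambda t}P_t f\,dt$. For uniqueness of bounded classical solutions I would apply It\^o's formula to $e^{-\lambda t}v(Z_t^z)$: if $v\in C^2_b$ is bounded and satisfies $\lambda v=\L_0 v$, the drift part vanishes, the stochastic integral is a true martingale because $Dv$ is bounded, and taking expectations gives $v(z)=E[e^{-\lambda t}v(Z_t^z)]\to 0$ as $t\to\infty$ since $\lambda>0$. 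Finally \eqref{se} is immediate from $\L_0 u_\lambda=\lambda u_\lambda-f$ together with \eqref{sti1}: $\|\L_0 u_\lambda\|_p\le \lambda\|u_\lambda\|_p+\|f\|_p\le (C+1)\|f\|_p$.

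The one non-routine point is choosing the explicit threshold $\lambda_0$ in a way that dominates simultaneously $\mathrm{tr}(A)/p$ (for \eqref{sti1}) and the spectral growth rate of $e^{tA}$ (for \eqref{sti2}); once this is done, everything else is a packaging of the Mehler formula, a change of variables, and the standard semigroup/It\^o arguments, with no appeal to the hypoellipticity condition \eqref{kal} (that condition is needed only for the supremum bound of Lemma \ref{nu0} and for the Calder\'on–Zygmund-type estimate \eqref{stim0}, not here).
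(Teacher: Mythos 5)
Your proposal is correct and follows essentially the same route as the paper: the $L^p$ bound $\|P_t f\|_p\le e^{-t\,\mathrm{Tr}(A)/p}\|f\|_p$ via the Mehler formula and a change of variables (the paper phrases this as Young's inequality for the convolution with $N(0,e^{-tA}Q_te^{-tA^*})$), the intertwining identities $DP_tf=e^{tA^*}P_t(Df)$ and $D^2P_tf=e^{tA^*}P_t(D^2f)e^{tA}$ for \eqref{sti2}, differentiation under the integral sign plus $\partial_tP_tf=\L_0P_tf$ for \eqref{eq}, and the identity $\L_0u_\lambda=\lambda u_\lambda-f$ for \eqref{se}. Your observation that \eqref{kal} is not actually used here, and your explicit It\^o-formula argument for uniqueness where the paper merely cites the maximum principle, are both correct minor refinements rather than a different approach.
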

\begin{proof} Set $g_t(z) = f(e^{tA} z)$, $t \ge 0$, $z \in \R^d$.
By changing variable   we find
$$
P_t f(z) =    \int_{\R^d} g_t( z + e^{-tA}
 y) N(0, Q_t)dy
=    \int_{\R^d} g_t( z + w) N(0,e^{-tA}Q_t e^{-tA^*})dw.
$$
By the Young inequality we get, for $p \ge 1$,
$$
\| P_t f\|_p \le \| g_t \|_p = e^{- \frac {t}{p} Tr(A)} \| f\|_p.
$$
Hence, by using the Jensen inequality, we have for $\lambda > - Tr(A)$
\begin{gather*}
    \| u_{\lambda}\|^p_p = \int_{\R^d} \Big| \frac{1}{\lambda}
 \int_{0}^{+\infty} {\lambda} {e^{- \lambda t}} P_tf(z)  dt
 \Big|^p dz
\\
 \le \frac{1}{\lambda^p} \int_{\R^d}
 dz\int_{0}^{+\infty} {\lambda}{e^{- \lambda t}} |P_t f(z)|^p  dt
 \le
\lambda^{1-p}
 \int_{0}^{+\infty} {e^{- \lambda t}}e^{-  {t} Tr(A)} dt \;  \| f\|_p^p
\\
 \le \frac{\lambda^{1-p}}{\lambda + Tr(A)} \| f\|_p^p
\end{gather*}
and so \eqref{sti1} follows easily.

Concerning \eqref{sti2} note that, for any $h \in \R^d$,
\begin{equation} \label{rt7}
\lan D u_{\lambda}(z), h \ran
= \int_{0}^{+\infty} e^{- \lambda t}  P_t ( \lan Df (\cdot), e^{tA}h \ran) (z)
  dt.
\end{equation}
Indeed we have the following straightforward formulae
$$
\begin{array}{c}
\lan DP_t f(z), h \ran = P_t ( \lan Df (\cdot), e^{tA}h \ran) (z),
\\
 \lan D^2P_t f(z) [h], k \ran = P_t ( \lan D^2 f (\cdot) [e^{tA}h], e^{tA}k \ran) (z), \;\;\;h,k \in\R^d,\; t \ge 0,
\end{array}
$$
$z \in \R^d$. Starting from \eqref{rt7}
 the first estimate in \eqref{sti2}
can be proved arguing as in the proof of \eqref{sti1}. In a similar
way we get also the second estimate in \eqref{sti2}.

\hh Let us prove the final assertion.
 It is easy to see that there exists $\lambda_0=
 \lambda_0(A,d) >0$ such that
for $\lambda > \lambda_0$ we have that $u_{\lambda} \in C^2_b (\R^d)$.    Moreover, for any
$z \in \R^d$, differentiating under the integral sign we get
$$
 \L_0 u_{\lambda}(z) = \int_{0}^{+\infty} e^{- \lambda t}  \L_0 (P_t
 f) (z)
  dt
$$
$$
= \int_{0}^{+\infty} e^{- \lambda t}  \frac{d}{dt} (P_t
 f) (z) dt = - f(z) + \lambda u_{\lambda}(z),
 $$
so that $u_{\lambda}$ is a classical solution to $\lambda u_{\lambda}  - \L_0 u_{\lambda} =f$ ($u_{\lambda}$ is the unique bounded classical solution by the maximum principle).
 Finally, writing
$$
 \L_0 u_{\lambda} = - f + \lambda u_{\lambda}
$$
and using \eqref{sti1} we obtain \eqref{se}.
\end{proof}

\smallskip
\noindent \begin{proof}[Proof of Theorem \ref{PP}] The proof is divided into two steps.

\hh \textit{Step 1.} {\it We show that \eqref{stim0} holds even if
$v \in C^2_K(\R^d)$.}

\smallskip To this purpose take any $v \in C^2_K(\R^d)$
and consider standard mollifiers $(\rho_n) \subset C^{\infty}_K(\R^d)$ (i.e., $0 \le \rho_n \le 1$, $\rho_n(z)=0$ if
$|z|> \frac{2}{n}$, $\int \rho_n =1$, $\rho_n (z) = \rho_n(-z)$).
Define $v_n = v * \rho_n \in C^{\infty}_K (\R^d)$. According to \eqref{stim0} we have
\begin{equation} \label{cf4}
  \| D^2_x   v_n \|_p \le C (\| \L_0 v_n\|_p + \| v_n\|_p).
\end{equation}
It is not difficult to show that $\L_0 v_n \to \L_0 v$ in $L^p(\R^d)$ as $n \to \infty$, $p \ge 1$. We only show that $
\lan Az , D v_n(z)\ran$ $\to \lan Az , D v(z)\ran$ in $L^p(\R^d)$ as $n \to \infty$ (similarly, one can check that $\frac{1}{2}\text{Tr}(Q_0 D^2_x v_n)$ $\to \frac{1}{2}\text{Tr}(Q_0 D^2_x v)$ in $L^p(\R^d)$). We have
$$
\begin{array}{c}
\lan Az , D v_n(z)\ran = g_n(z) + h_n(z),
\\
g_n(z) = \int_{\R^d} \lan Az - Aw  , D v(w)\ran \, \rho_n(z- w) dw,
\\
h_n(z) =  \int_{\R^d} \lan Aw  , D v(w)\ran \, \rho_n(z- w) dw.
\end{array}
$$
By standard properties of mollifiers, $h_n \to \lan Az , D v(z)\ran$ in $L^p(\R^d)$ as $n \to \infty$. Concerning $g_n$, we find
$$
\int_{\R^d} |g_n(z)|^p dz \le \int_{\R^d} dz \int_{\R^d} |\lan Aw  , D v(z - w)\ran |^p \, \rho_n(w) dw
\le \frac{2^p\, \| A\|^p}{n^p} \, \| Dv\|_p^p
$$
which tends to 0 as $n \to \infty$.
 Since $\L_0 v_n \to \L_0 v$ in $L^p(\R^d)$, we can pass to the limit in \eqref{cf4} as $n \to \infty $ and get, for $p>1$,
$$
  \| D^2_x   v \|_p \le C (\| \L_0 v\|_p + \| v\|_p).
$$
\textit{Step 2.} {\it We consider $\lambda_0$ from Lemma \ref{nu1} and prove that
 $u = u_{\lambda} = R(\lambda , \L_0) f$
 verifies \eqref{stim1} for $\lambda > \lambda_0$}.

\hh From Lemma \ref{nu1} we already know several regularity properties of $u$. We will
 use these properties in the sequel.

 Let $\phi \in C^{\infty}_K(\R^d)$ be such that $0 \le \phi \le 1$ and $\phi (z) = 1$,
  $|z| \le 1$. Define $w_n(z) = u(z) \cdot \psi_n(z)$, $z \in \R^d,$
  where $\psi_n(z) = \phi(\frac{z}{n}) $,
  for $n \ge 1$. It is clear that each $w_n \in C^2_K$. Applying the first step we
  have
$$
  \| D^2_x   w_n \|_p^p \le \hat C (\| \L_0 w_n\|_p^p + \| w_n\|_p^p)
$$
which becomes
(for $h, k \in \R^{d_0}$, $h \otimes k \in \R^{d_0} \otimes \R^{d_0}$, with  $h \otimes k [w] = h \lan k, w \ran$, $w \in \R^{d_0}$)
\begin{gather*}
\int_{\R^d}\Big \| \frac{1}{n^2} u(z)  D^2_x \phi(\frac{z}{n}) +  \frac{1}{n}
D_x u(z) \otimes D_x \phi(\frac{z}{n}) +
 \frac{1}{n}
 D_x \phi(\frac{z}{n}) \otimes
D_x u(z)  + D^2_x u(z)
\phi(\frac{z}{n}) \Big \|^p_p dz
\\
\le C'  \Big( \|\L_0 u \|_p^p  +  \sup_{z \in \R^d} |\lan A {z} ,
D \phi({z})
\ran |  \cdot  \| u\|_p^p +
\\
+ \frac{1}{n^2} \| D^2_x \phi \|_{\infty} \, \| u\|_p^p +
\frac{1}{n} \| D_x \phi \|_{\infty} \, \| D_x u\|_p^p
   +  \| u\|_p^p\Big),
\end{gather*}
 with $C' = C'(\eta, A, d_0, d,p)>0$.
Now by the Fatou lemma (using also \eqref{sti2} in Lemma
\ref{nu1}) as $n \to \infty$ we find
\begin{gather*}
 \|   D^2_x u     \|_p
\le C_1  \big( \|\L_0 u \|_p  +  \, \| u\|_p \big)
\le C_1   \big( \|\L_0 u - \lambda u\|_p + \lambda
\|  u\|_p +  \, \| u\|_p \big)
\end{gather*}
with $C_1$ independent of $\lambda$. Using \eqref{eq}  we get
 (recall that $u = u_{\lambda}$)
\begin{gather*}
 \|   D^2_x u_{\lambda}    \|_p
\le C_1   \big( \| f\|_p +
C\| f \|_p +  \, \frac{C}{\lambda_0} \| f\|_p \big),
\end{gather*}
 for $\lambda > \lambda_0$
 and this gives the assertion.
\end{proof}

\subsection{An estimate for the resolvent of a  martingale solution}

Next we generalize estimate \eqref{d6}
 to the case in which we have a martingale solution
  for the operator $\L_1$ given in \eqref{ll1}.

\begin{theorem} \label{qvar}
 Let us consider $\L_1$
  assuming  Hypothesis \ref{hy}
and also \eqref{extra} for some $\eta >0$.
  Consider
$\hat p$ from  Lemma \ref{nu0}.
  There exists a positive constant $\gamma = \gamma (A, d_0,  \eta,  d)$
  such that if $Q_0(z)$ in \eqref{ll1} verifies
\begin{equation}\label{sup13}
\sup_{z \in \R^d} \| Q_0(z) - \hat Q_0 \|_{} < \gamma,
\end{equation}
for some positive definite  matrix $\hat Q_0 \in \R^{d_0} \otimes \R^{d_0}$
such that $ {\eta} |h|^2 \le \lan \hat Q_0 h,h \ran \le {\frac{1}{\eta}}
 |h|^2,$ $ h \in \R^{d_0},$  then
 any  solution $Y =(Y_t)= (Y_t^z)$ to the martingale problem for $(\L_1, \delta_z)$ verifies,
 for any $f \in C^{2}_K (\R^d)$, $p > \hat p$, $\lambda > \tilde \lambda_0> 0,$
where $\tilde \lambda_0 = \tilde \lambda_0(A,p,d),$
\begin{equation}\label{}
     \Big|  \int_{0}^{+\infty}
     e^{- \lambda t} E [f(Y_t^z)] dt \Big| \le C \| f\|_{p},
\end{equation}
for some constant $C = C(p,\eta, d, d_0, A) >0$.
 \end{theorem}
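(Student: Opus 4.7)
The plan is a Neumann-series perturbation off the constant-coefficient hypoelliptic OU operator $\hat{\L}_0$ obtained from \eqref{ll1} by replacing $Q_0(\cdot)$ with $\hat Q_0$. Decompose $\L_1 = \hat\L_0 + \mathcal{K}$ where $\mathcal{K}v(z) := \frac{1}{2}\Tr\bigl((Q_0(z)-\hat Q_0)D^2_x v(z)\bigr)$. By Theorem \ref{PP} (for $\lambda>\lambda_0(A,p,d)$), the operator $\mathcal{T}:=\mathcal{K}\,R(\lambda,\hat\L_0)$ extends to a bounded operator on $L^p(\R^d)$ with norm at most $C_0\gamma$, where $C_0=C_0(\eta,A,d_0,d,p)$. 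Choosing $\gamma<1/(2C_0)$, the Neumann series $\sum_{k\ge 0}\mathcal{T}^k=(I-\mathcal{T})^{-1}$ converges geometrically in $L^p$.

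\smallskip
The central step is a Duhamel identity. For $f\in C^2_K(\R^d)$, set $u:=R(\lambda,\hat\L_0)f$; by Lemma \ref{nu1} $u\in C^2_b(\R^d)$ and $\lambda u-\hat\L_0 u = f$, so $\L_1 u=\lambda u-f+\mathcal{K}u$ is bounded. Standard cutoff arguments extend the martingale property of $Y=Y^z$ from $C^2_K$ to this $C^2_b$-test function. Applying It\^o's formula to $e^{-\lambda t}u(Y_t)$ and letting $t\to\infty$ (using $u$ bounded and $\lambda>0$) yields
\[
\Phi^Y_\lambda f(z) \;=\; u(z) \;+\; \Phi^Y_\lambda(\mathcal{T}f)(z),\qquad \Phi^Y_\lambda g(z):=\int_0^\infty e^{-\lambda t}E[g(Y^z_t)]\,dt.
\]
Lemma \ref{nu0} gives $|u(z)|\le C_1\|f\|_p$ for $p>\hat p$.

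\smallskip
Now iterate this identity. Each iterate $\mathcal{T}^k f$ ($k\ge 1$) lies in $L^p\cap C_b$ but not in $C^2_K$, so we approximate $\mathcal{T}^k f$ by $g_n^{(k)}\in C^\infty_K$ with $g_n^{(k)}\to\mathcal{T}^k f$ pointwise a.e.\ and in $L^p$, keeping $\|g_n^{(k)}\|_\infty$ uniformly bounded (cutoff plus mollification). Dominated convergence passes $\Phi^Y_\lambda g_n^{(k)}(z)\to\Phi^Y_\lambda(\mathcal{T}^k f)(z)$, and Lemma \ref{nu0} extended to $L^p$-inputs gives $R(\lambda,\hat\L_0)g_n^{(k)}(z)\to R(\lambda,\hat\L_0)(\mathcal{T}^k f)(z)$. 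After $N$ iterations one obtains
\[
\Phi^Y_\lambda f(z) \;=\; \sum_{k=0}^{N-1}R(\lambda,\hat\L_0)(\mathcal{T}^k f)(z) \;+\; r_N(z),
\]
with $|R(\lambda,\hat\L_0)(\mathcal{T}^k f)(z)|\le C_1(C_0\gamma)^k\|f\|_p$. The main series sums to $C_1(1-C_0\gamma)^{-1}\|f\|_p$, and $r_N(z)\to 0$ by the same approximation, yielding the asserted bound with $C:=C_1/(1-C_0\gamma)$ and $\tilde\lambda_0:=\lambda_0$.

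\smallskip
The main obstacle is the iteration: the iterates $\mathcal{T}^k f$ for $k\ge 1$ are only $L^p$-integrable, and the associated resolvents $R(\lambda,\hat\L_0)(\mathcal{T}^k f)$ are not known to lie in $C^2_b$, so It\^o cannot be applied to them directly. The level-by-level approximation above circumvents this. A cleaner alternative is to define once and for all $w:=R(\lambda,\hat\L_0)(I-\mathcal{T})^{-1}f$ as a candidate classical solution of $(\lambda-\L_1)w=f$, with $\|w\|_\infty\le C_1(1-C_0\gamma)^{-1}\|f\|_p$ by Lemma \ref{nu0}, and conclude $\Phi^Y_\lambda f(z)=w(z)$ by a single application of It\^o to $w$, provided enough regularity of $w$ can be established via further approximation.
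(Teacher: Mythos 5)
Your Duhamel identity $\Phi^Y_\lambda f = R(\lambda,\hat\L_0)f + \Phi^Y_\lambda(\mathcal{T}f)$ is exactly the identity the paper derives (formula \eqref{chi1}), and the decomposition $\L_1=\hat\L_0+\mathcal{K}$ is the same. The gap is in what you do with it. After $N$ iterations your remainder is $r_N=\Phi^Y_\lambda(\mathcal{T}^N f)$, and to kill it you only have at your disposal $\|\mathcal{T}^N f\|_{L^p}\le (C_0\gamma)^N\|f\|_p\to 0$. But the only a priori continuity of the functional $\Phi^Y_\lambda$ for a general martingale solution $Y$ is $|\Phi^Y_\lambda g|\le \lambda^{-1}\|g\|_\infty$; the bound $|\Phi^Y_\lambda g|\le C\|g\|_{L^p}$ is precisely the statement of the theorem. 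So concluding $r_N\to 0$ from $L^p$-smallness of $\mathcal{T}^N f$ is circular. Nor can you switch to sup norms: $\mathcal{T}g=\frac12\Tr((Q_0(\cdot)-\hat Q_0)D^2_xR(\lambda)g)$ is controlled only through the $L^p$ estimate of Theorem \ref{PP}; there is no sup-norm bound on $D^2_xR(\lambda)g$ here, so $\mathcal{T}^k f$ need not even be bounded for $k\ge 1$, and your approximation scheme (pointwise a.e.\ convergence plus dominated convergence for $E[g_n^{(k)}(Y_t)]$) silently requires the law of $Y_t$ to be absolutely continuous with respect to Lebesgue measure, which is not known a priori for a degenerate SDE with merely continuous diffusion coefficient. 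The same obstruction blocks your ``cleaner alternative'': $w=R(\lambda,\hat\L_0)(I-\mathcal{T})^{-1}f$ is only known to be bounded with $D^2_xw\in L^p$, not $C^2_b$, so It\^o's formula cannot be applied to it, and the missing regularity is not a technicality one can wave at.

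The paper resolves exactly this chicken-and-egg problem differently: it replaces $Y$ (in law) by a solution $Z$ of the integral equation and introduces time-discretized processes $Z^m$ whose diffusion coefficient is frozen on dyadic intervals $[k2^{-m},(k+1)2^{-m})$. For each fixed $m$, conditioning on $\mathcal{F}_{k/2^m}$ reduces each interval to a genuine constant-coefficient OU semigroup, to which Lemma \ref{nu0} applies; summing over the finitely many intervals gives the a priori finiteness $\|V_m(\lambda,z)\|_{L(L^p;\R)}\le m2^mC<\infty$. With finiteness in hand, a \emph{single} absorption step (not an infinite Neumann iteration) from the Duhamel identity, $\|V_m\|\le C+\tfrac{\gamma}{2}C'\|V_m\|$, yields a bound uniform in $m$, and one then lets $m\to\infty$. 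If you want to salvage your argument, you must supply the analogue of this a priori finiteness for $\Phi^Y_\lambda$ before iterating; the discretization is the mechanism the paper uses to manufacture it.
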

\begin{proof}
The proof is inspired by the  one of Theorem IV.3.3 in \cite{IW}
(see also Chapter 7 in \cite{SV79}) and uses Theorem \ref{PP}, Lemmas \ref{nu0} and \ref{nu1}. Let  $\omega >0 $ and $M>0$ be such that
\begin{equation} \label{ohi}
\| e^{tA} \| \le M e^{\omega t},\;\; t \ge 0.
\end{equation}
The constant $\tilde \lambda_0$ will be $\lambda_0 \vee \omega$ where $\lambda_0(A,p,d)$
is  given in Theorem \ref{PP}.

Given a martingale solution $Y$ there exists a stochastic basis $( \Omega,  {\cal  F}, ( {\cal  F}_t),  P)$
on which there exists a $d_0$-dimensional $ {\cal  F}_t$-Wiener process
 $W= (W_t)$ and a  solution $Z = (Z_t)= (Z_t^z)$ to
\begin{equation}\label{conv}
 Z_t =  e^{tA} z +
 \int_0^t e^{(t-s) A}\sqrt{Q (Z_s)}\, d W_s,\;\;\; t \ge 0,\;\;\;
 Q(z)=   \begin{pmatrix}
 Q_0(z) & 0\\
 0 & 0
\end{pmatrix},
\end{equation}
such that the law of $Y$ coincides with the one of $Z$ (for more details see Section IV.2 in \cite{IW} or Section 5.3 in \cite{EK}).
In the sequel to simplify notation we write $Z_t$ instead of $Z^z_t.$
Thus it is enough to show that, for a fixed $\lambda > \tilde \lambda_0$ we have
\begin{equation}\label{lap1}
\Big| \int_{0}^{+\infty} e^{- \lambda t} E [f(Z_t)] dt \Big| \le C \| f\|_p \, ,\;\;\; f \in C^{2}_K.
\end{equation}
Let us define new adapted processes $X^m = (X^m_t)$, $m \ge 1,$
 $
  X^{m}_t= Z_{\frac{k}{2^{m}} \, \wedge \, m}
 $
 for $t\in [\frac{k}{2^{m}},\frac{k+1}{2^{m}}[$ and  $k=0, 1, \ldots $; moreover  consider
\begin{gather*} Z_{t}^{m}=
e^{tA} z +
 \int_0^t e^{(t-s) A}\sqrt{Q (X_s^m)}\, d W_s,\;\;\; t \ge 0.
\end{gather*}
Since, for any $T>0$, $\lim_{m \to \infty}E [\sup_{t \in [0,T]} |Z_{t}^{m} - Z_{t}|^2 ]=0 $,  it is easy to check that
$$
    \int_{0}^{+\infty} e^{- \lambda t} E [f(Z_t^m)] dt
    \to \int_{0}^{+\infty} e^{- \lambda t} E [f(Z_t)] dt
$$
as $m \to \infty$, for any  $f\in {C}_{K}^{2}\left(\mathbb{R}^{d}\right)$, $\lambda >0$. Therefore the assertion follows if we prove that
\begin{equation}\label{lap12}
\Big| \int_{0}^{+\infty} e^{- \lambda t} E [f(Z_t^m)] dt \Big| \le C \| f\|_p,\;\;\; f \in C^{2}_K, \; \lambda > \tilde \lambda_0,
\end{equation}
 with \textit{$C= C(p, \eta, d, d_0, A)$ independent of $m$.} This will be achieved into three   steps.

\hh \textit{Step 1.} {\it We show that, for any $m \ge 1$, \eqref{lap12} holds with $C$ possibly depending on $m$.}

\smallskip We fix $f \in C^{2}_K$, $m \ge 1$, $\lambda>0$ and consider
\begin{gather} \label{civuo}	 V_m(\lambda,z) f := {E}\left[\int_{0}^{\infty}e^{-\lambda t}	 f\left(Z_{t}^{m_{}}\right)dt\right]\\
\nonumber =
 \sum_{k=0}^{m_{}2^{m_{}}-1}{E}\left[
\int_{\frac{k}{2^{m_{}}}}^{\frac{k+1}{2^{m_{}}}}
	e^{-\lambda t}f\left(Z_{t}^{m_{}}\right)dt\right] +{E}\left[\int_{m_{}}^{\infty}e^{-\lambda t}f
	\left(Z_{t}^{m_{}}\right)dt\right].
\end{gather}
Let us fix $k \in \{ 0, \ldots, m_{}2^{m_{}}-1\}$ and define
  \begin{gather*}
J_k = {E}	\Big[
\int_{\frac{k}{2^{m_{}}}}^{\frac{k+1}{2^{m_{}}}}
	e^{-\lambda t}f\left(Z_{t}^{m_{}}\right)dt\Big] = {{E}}
\Big[\int_{\frac{k}{2^{m{}}}}^{\frac{k+1}{2^{m{}}}}
e^{-\lambda t}{{E}}\left[f\left(Z_{t}^{m{}}
\right) / {\mathcal{F}}_{\frac{k}{2^{m{}}}}
\right]dt\Big]
\end{gather*}
(we are using conditional expectation with respect to $\mathcal{F}_{\frac{k}{2^{m{}}}}$). If we set
$$
U = e^{ k/2^{m} A}z +
 \int_0^{k/2^m} e^{(k/2^{m}\, -s) A}\sqrt{Q (X_s^m)}\, d W_s
$$
then, by a well-known property of conditional expectation
 (using also that
 $$ \int_{k/2^m}^t e^{(t-r) A}\sqrt{Q (y_2)}\, d W_r $$ is independent of ${\mathcal{F}}_{\frac{k}{2^{m_{}}}}$ for any $y_2 \in \R^d$)
we have, for $t \in [\frac{k}{2^{m_{}}}, \frac{k+1}{2^{m_{}}}[$,
\begin{gather*}
{{E}}\left[f\left(Z_{t}^{m_{}}\right)|
{\mathcal{F}}_{\frac{k}{2^{m_{}}}}\right]
\\
 =
{{E}}\Big
[f\Big(e^{(t-k/2^{m})A}\, U +  \int_{k/2^m}^t e^{(t-s) A}\sqrt{Q (Z_{\frac{k}{2^{m}}})}\, d W_s  \Big)
/ {\mathcal{F}}_{\frac{k}{2^{m_{}}}}\Big]
= F_{}(t - \frac{k}{2^{m}}, U, Z_{\frac{k}{2^{m}}})
\end{gather*}
where
$$
F_{}\big(s,y_1, y_2 \big) ={{E}} \Big
[f\Big(e^{s A} y_1 +  \int_{0}^{s} e^{(s -r) A}\sqrt{Q (y_2)}\, d W_r  \Big)\Big]
$$
(note  that
$$
F_{}\big(t - \frac{k}{2^{m}},y_1, y_2 \big)=
{{E}} \Big
[f\Big(e^{(t-k/2^{m}) A}y_1 +  \int_{k/2^m}^t e^{(t-r) A}\sqrt{Q (y_2)}\, d W_r  \Big)\Big]).
$$
 It follows that (recall that  $U$ depends also on $k$)
\begin{gather*}
J_k =
{{E}}\left[\int_{\frac{k}{2^{m_{}}}}^{\frac{k+1}
{2^{m_{}}}}e^{-\lambda t}{{E}}\left[f\left(Z_{t}^{m_{}}\right) / {\mathcal{F}}_{\frac{k}{2^{m_{}}}}\right]dt\right]
=\int_{\frac{k}{2^{m_{}}}}^{\frac{k+1}
{2^{m_{}}}}e^{-\lambda t}{{E}}\left[
F_{}(t - \frac{k}{2^{m}}, U, Z_{\frac{k}{2^{m}}})
\right]dt
 \\
=
\int_{0}^{\frac{1}{2^{m_{}}}}
e^{-\lambda\left(s+\frac{k}{2^{m_{}}}\right)}
{{E}}
\left[ F_{}(s, U, Z_{\frac{k}{2^{m}}}) \right]
ds.
\end{gather*}
Therefore, for any $k =0, \ldots, m_{}2^{m_{}}-1,$
$$
|J_k| \le \int_{0}^{\frac{1}{2^{m_{}}}}
e^{-\lambda\left(s+\frac{k}{2^{m_{}}}\right)}
{{E}} [| F_{}(s, U, Z_{\frac{k}{2^{m}}})|]
ds \le
\int_{0}^{+\infty}
e^{-\lambda s }
{{E}} | F_{}(s, U, Z_{\frac{k}{2^{m}}})|
ds .
$$
Now it is crucial to observe that by Lemma
\ref{nu0} we have, for any $y_1, y_2 \in \R^d,$  $p> \hat p$, $\lambda > \lambda_0$,
\begin{gather} \label{be2}
 \int_{0}^{+\infty}
e^{-\lambda s }
 | F_{}(s, y_1, y_2)|
ds  \le C \| f\|_p,
\end{gather}
where $C = C(\eta, d, d_0, A,p)>0$ is
independent of $y_1$ and $y_2$.
 Indeed $ F_{}(t, y_1, y_2)$ coincides
  with the OU semigroup in \eqref{ou341} with
   $y_1 =z$ and $Q$ replaced by $Q(y_2)=   \begin{pmatrix}
 Q_0(y_2) & 0\\
 0 & 0
\end{pmatrix}
  $; note that   $Q_0(y_2)$ verifies \eqref{qoo}
   by \eqref{extra}.

It follows that $|J_k| \le  C \| f\|_p$, for any $k =0, \ldots, m_{}2^{m_{}}-1$. Similarly,
 using that
$$I= {E}	\left[
\int_{m}^{\infty}
	e^{-\lambda t}f\left(Z_{t}^{m_{}}\right)dt\right] = {{E}}
\left[\int_{m}^{\infty}
e^{-\lambda t}{{E}}\left[f\left(Z_{t}^{m{}}
\right) / {\mathcal{F}}_{m}
\right]dt\right],
$$
we find the estimate $|I| \le C \| f\|_p$.
 Returning to \eqref{civuo} we get
\begin{gather*}
\Big |{E}\left[\int_{0}^{\infty}e^{-\lambda t}	 f\left(Z_{t}^{m_{}}\right)dt\right] \Big|
\le \sum_{k=0}^{m_{}2^{m_{}}-1} |J_k| \,
	 +\, \Big |{E}\left[\int_{m_{}}^{\infty}e^{-\lambda t}f
	\left(Z_{t}^{m_{}}\right)dt\right]\Big |
\le m 2^{m} \, C \, \| f\|_p
\end{gather*}
which shows \eqref{lap12} with a constant
possibly depending on $m$.

\hh  \textit{Step 2.} {\it We establish  the following identity, for any $f \in C^2_b(\R^d)$, $\lambda>\omega$ (see \eqref{ohi}$)$,
\begin{gather} \label{tor1}
\lambda\int_{0}^{\infty}e^{-\lambda t}{E}\left[f\left(Z^m_{t}\right)\right]dt=
f\left(z\right)+ {E}\int_{0}^{\infty}e^{-\lambda t} \mathcal{L}_m
f\left(t,Z^m_{t}\right)dt,
\end{gather}
 with a suitable operator $\L_m$.}

Consider first $f \in C^{2}_K(\R^d)$ and fix $m \ge 1$. Writing It\^o's formula
for  $f(Z_t^m)$ and taking expectation we find
\begin{align*}
    E f(Z_t^m)= f(z) + E \int_0^t \langle A Z_s^m, Df(Z_s^m) \rangle ds
   \, + \,
 \frac{1}{2}\int_0^t E [\text{Tr}({Q (X_s^m)} D^2 f(Z_s^m))] ds,
\end{align*}
 $ t \ge 0.$ Defining the operator
 \begin{align*}
    \L_m f(s,z) = \frac{1}{2} \Tr(Q(X_s^m) D^2 f(z))
+ \lan A z , D f(z)\ran, \;\; \; f \in C^{2}_K(\R^d), \; z \in \R^d,
 \end{align*}
 with random coefficients, we see that
$
    E [f(Z_t^m)]= f(z) + E \int_0^t \L_m f(s,Z_s^m)ds.
$
Using the Fubini theorem we find
\begin{multline} \label{bau1}
\int_{0}^{\infty}e^{-\lambda t}{E}\left[\int_{0}^{t}
\mathcal{L}_m f \left(s,Z^m_{s}
\right)ds\right]dt
\\
={E}\left[\int_{0}^{\infty}\mathcal{L}_m f
\left(s, Z^m_{s}\right)ds \int_{s}^{\infty}e^{-\lambda t}dt\right]=\frac{1}{\lambda}{E}
\left[\int_{0}^{\infty}e^{-\lambda t}\mathcal{L}_m f\left(t,Z^m_{t}
\right)dt\right].
\end{multline}
It follows \eqref{tor1} for $f \in C^{2}_K(\R^d)$.
Now a simple approximation argument shows that \eqref{tor1}
holds even for $f \in C^2_b(\R^d)$.
To this purpose note  that  $E [ |Z_t^m|^2] \le  C(z,M, \omega, \eta) \, e^{2\omega t},$ $t \ge 0$ (this implies that $E [|\mathcal{L}_m f(t,Z^m_{t} )|] \le \tilde C \, e^{\omega t}$, $m \ge 1$, $t \ge 0$, where $\tilde C $ is independent of $t$).


\hh  \textit{Step 3.}
{\it We prove assertion \eqref{lap12}  with $C$ independent of $m$.}

\hh Using hypothesis \eqref{sup13} let
 $\hat \L_0$ be the hypoelliptic OU operator
   associated to $A$ and $\hat Q$ where
 $$
\hat Q  = \begin{pmatrix}
 \hat Q_0 & 0\\
 0 & 0
\end{pmatrix}.
$$
  We write
\begin{equation}\label{ba1}
     \L_m f(s,z) = \hat \L_0 f(z) + \Re_{m} f(s,z),
\end{equation}
$$
 \Re_{m} f(s,z) =   \frac{1}{2} \Tr([Q_0(X_s^m) -
  \hat Q_0] D^2_x f(z)),
  \;\; \; f \in C^2_b(\R^d), \; z \in \R^d, \; s \ge 0.
$$
Recall that
$$
V_m(\lambda,z)f =
\int_{0}^{\infty}e^{-\lambda t}{E}\left[f\left(Z^m_{t}\right)\right]dt,
\;\; f \in C^2_b(\R^d);
$$
we can rewrite \eqref{tor1}  as
\begin{gather} \label{to1}
\nonumber \lambda V_m(\lambda,z)f =
f\left(z\right)+ {E}\int_{0}^{\infty}e^{-\lambda t} \hat \L_0
f\left(Z^m_{t}\right)dt
- \lambda {E}\int_{0}^{\infty}e^{-\lambda t}
f\left(Z^m_{t}\right)dt
\\
+  \lambda {E}\int_{0}^{\infty}e^{-\lambda t}
f\left(Z^m_{t}\right)dt
+ {E}\int_{0}^{\infty}e^{-\lambda t} \Re_{m}
f\left(t,Z^m_{t}\right)dt.
\end{gather}
By taking
$$f = R(\lambda,\hat \L_0 )g = R(\lambda)g,
$$
for $g \in C^{2}_K(\R^d)$ ($R(\lambda,\hat \L_0 )g$  is defined
as in \eqref{reou} with $\L_0$ replaced by $\hat \L_0$)  and using that
 $(\lambda - \hat \L_0) R(\lambda,\hat \L_0 )g =g$ (see \eqref{eq}),
 we obtain from the above identity
\begin{gather*}
\nonumber \lambda V_m(\lambda,z) [R(\lambda)g]  =
  R(\lambda)g\left(z\right) - V_m(\lambda,z) g \\
 \nonumber
   + \lambda V_m(\lambda,z) [R(\lambda)g]
   + {E}\int_{0}^{\infty}e^{-\lambda t} \Re_{m}
[R(\lambda)g]\left(t,Z^m_{t}\right)dt.
\end{gather*}
We find, for any $g \in C^{2}_K(\R^d)$,  $m \ge 1,$ $\lambda>\omega$, $z \in \R^d$,
\begin{align} \label{chi}
V_m(\lambda,z) g  = R(\lambda)g\left(z\right)  + {E}\int_{0}^{\infty}e^{-\lambda t} \Re_{m}
[R(\lambda)g]\left(t,Z^m_{t}\right)dt.
\end{align}
Now by the first step we know that for $p > \hat p$, $\lambda> \tilde \lambda_0$,
$z \in \R^d$, $m \ge 1$,
$$
\| V_m(\lambda,z)  \|_{L (L^p; \R)} = \sup_{g \in C^{2}_K,
 \, \|g\|_{L^p(\R^d)} \le 1} |V_m(\lambda,z) g| < + \infty.
$$
Using Lemma \ref{nu0} and condition \eqref{sup13}, we find that,  for $\lambda > \tilde \lambda_0$,
$$
|V_m(\lambda,z) g | \le  |R(\lambda)g\left(z\right)|
$$$$+ \frac{1}{2} {E}\int_{0}^{\infty}e^{-\lambda t} |\Tr\big([Q_0(X_s^m) -
  \hat Q_0] \, D^2_x
R(\lambda)g\left(Z^m_{t}\right) \big)|dt
$$
 $$
\le C \| g\|_p + \frac{\gamma}{2} {E}\int_{0}^{\infty}e^{-\lambda t} \| D^2_x
 R(\lambda)g\left(Z^m_{t}\right)\|_{} \, dt
\le C \| g\|_p + \frac{\gamma}{2} V_m(\lambda,z)
\| D^2_x
R(\lambda)g\|_{}
$$
(we are considering $V_m(\lambda,z)$ applied to the function $z \mapsto \| D^2_x
R(\lambda)g(z)\|_{}$)
with $C = C(d,d_0, \eta,A,p)$. By taking the supremum over $\Lambda_1 = \{ g \in C^{2}_K, \, \|g\|_{L^p(\R^d)} \le 1 \}$, we find
$$
\| V_m(\lambda,z)  \|_{L (L^p; \R)} \le C + \frac{\gamma}{2} \| V_m(\lambda,z)  \|_{L (L^p; \R)}
\, \cdot \,  \sup_{ g \in \Lambda_1 }\| D^2_x
[R(\lambda)g]\|_{L^p(\R^d)}.
$$
Now we use Theorem \ref{PP} to deduce that, for   any $\lambda >
\tilde \lambda_0$, we have
$$
\sup_{ g \in \Lambda_1 }\| D^2_x
[R(\lambda)g]\|_{L^p} \le C'
$$
with $C' = C'(d,d_0, \eta,A,p)$.
 By choosing $\gamma $ small enough ($\gamma < \frac{1}{C'}$) we get that
$$
\| V_m(\lambda,z)  \|_{L (L^p ; \R)} \le 2 C,\;\;\; \lambda > \tilde \lambda_0,
$$
with $C$ which is also independent of $m \ge 1$.
This proves \eqref{lap12} and finishes the proof.
\end{proof}

\subsection{Proof of Theorem \ref{cons} }

Existence of martingale solutions can be proved using Theorem \ref{sko}. Indeed
${\cal L}_1$ is a very special case of the operator $\tilde \L$ (see \eqref{lll}) for which the existence of martingale solutions follows if we prove the existence of a Lyapunov function $\phi$.
Since $Q_0$ appearing in ${\cal L}_1$ is a bounded function  we can consider $\phi(z) = 1 + |z|^2$  and get the existence of martingale solutions.

\smallskip
Let us concentrate on uniqueness of martingale solutions.

We will use Theorems \ref{PP} and \ref{qvar}. The constant $\gamma$
 appearing in
 \eqref{sup1} will be the same constant as in Theorem \ref{qvar}.

  According to Corollary \ref{ria} to prove that
  the martingale problem for $\L_1$
is well-posed
 it is enough to fix any $z \in \R^d$ and prove that
if  $X^1= (X^1_t)$ and $X^2=(X^2_t)$
are two   solutions for the martingale problem for $(\L_1, \delta_z)$ (defined, respectively, on
 $(\Omega_1, {\cal F}_1, P_1)$ and $(\Omega_2, {\cal F}_2, P_2)$)
then they have the same  one dimensional marginal distributions.

\smallskip
To this purpose we first consider $\hat p$ from Theorem \ref{qvar} and fix any $p>
\hat p$. Then we take
$\tilde \lambda_0 = \tilde \lambda_0(A, p, d) >0$ from  Theorem \ref{qvar} (recall that
$\tilde \lambda_0 \ge \lambda_0 $ where $\lambda_0(A,p,d)$
is  given in Theorem \ref{PP})  and
define
 \begin{align}\label{eq13}
G_i(\lambda,z)f =
\int_{0}^{\infty}e^{-\lambda t}{E_i}\left[f\left(X^i_t\right)\right]dt,
  \;\; i =1,2,\;\; f \in C^2_K(\R^d), \; \lambda > {\tilde \lambda_0}.
 \end{align}
If we prove that
  for $\lambda > {\tilde \lambda_0}$ we have
 \begin{align}\label{133}
G_1(\lambda,z)f =G_2(\lambda,z)f,
 \end{align}
 for $f \in C^2_K(\R^d)$, then   by a well-known property of the Laplace transform
  we get that   $E[f(X^1_t)] = E[f(X^2_t)] $, $t \ge 0$, $f \in
 C^2_K(\R^d)$ and this shows that $X^1$ and
 $X^2$ have the same   one dimensional marginal distributions.

   To check \eqref{133} we will  also use some arguments from the proof of Theorem \ref{qvar}.

Let us fix $i=1,2$. By the martingale property we deduce that
$$
E_i[f(X^i_t)] = f(z) + E_i\int_0^t \L_1 f(X^i_s)ds, \;\;\; f \in C^2_K,\; t \ge 0.
$$
Arguing as in the proof of \eqref{tor1} we obtain
$$
\lambda\int_{0}^{\infty}e^{-\lambda t}{E_i}\left[f\left(X^{i}_t\right)\right]dt=
f\left(z\right)+ {E_i}\int_{0}^{\infty}e^{-\lambda t} \mathcal{L}_1
f\left(X^{i}_t\right)dt
$$
or, equivalently,
\begin{align}\label{tor4}
\lambda G_i(\lambda,z)f =
f\left(z\right)+
G_i(\lambda,z) \mathcal{L}_1
f.
\end{align}
Note that \eqref{tor4} holds even for $f \in C^2_b(\R^d)$ (see the comment after
\eqref{bau1}).
 Using hypothesis \eqref{sup1} let
 $\hat \L_0$ be the OU operator
   associated to $A$ and $\hat Q$ where
 $$
\hat Q  = \begin{pmatrix}
 \hat Q_0 & 0\\
 0 & 0
\end{pmatrix}.
$$
  We write, similarly to \eqref{ba1},
$$
     \L_1 f(z) = \hat \L_0 f(z) + \Re  f(z),
$$
$$
 \Re f(z) =   \frac{1}{2} \Tr([Q_0(z) -
  \hat Q_0] D^2_x f(z)),
  \;\; \; f \in C^2_b(\R^d), \; z \in \R^d.
$$
We can rewrite \eqref{tor4}  as
\begin{gather} \label{to4}
\nonumber   G_i(\lambda,z) (\lambda f - \hat \L_0 f)  =
f\left(z\right)+  G_i(\lambda,z) \Re_{}
f,\;\;\; f \in C^2_b(\R^d).
\end{gather}
By taking $f = R(\lambda,\hat \L_0 )g = R(\lambda)g$, $g \in C^{2}_K(\R^d)$ ($R(\lambda,\hat \L_0 )g$  is defined as in \eqref{reou} with $\L_0$ replaced by $\hat \L_0$) we obtain from the above identity
\begin{align} \label{chi1}
G_i(\lambda,z) g  = R(\lambda)g\left(z\right)  +
G_i(\lambda,z) \Re_{}[R(\lambda)g],
\end{align}
$g \in C^{2}_K(\R^d)$,   $\lambda> {\tilde \lambda_0}$, $i=1,2$.
 Define  $T(\lambda,z) : C^2_K \to \R$,
$$
T(\lambda,z) g = G_1(\lambda,z) g  - G_2(\lambda,z) g.
$$
We have by \eqref{chi1}
\begin{align}\label{tl}
    T(\lambda,z) g = T(\lambda,z) (\Re_{}[R(\lambda)g]).
\end{align}
By using Theorem \ref{qvar} we know that $T(\lambda,z)$, for any $\lambda > {\tilde \lambda_0}$,
can be extended to a bounded linear operator from $L^p(\R^d)$ into $\R$.
 By \eqref{tl} we find, using also \eqref{sup1},
$$
\| T(\lambda,z)    \|_{L (L^p ; \R)}  =\sup_{g \in \Lambda_1} |T(\lambda,z)  g | \le
  \frac{\gamma}{2} \| T(\lambda,z)    \|_{L (L^p; \R)}
\, \cdot \,  \sup_{g \in \Lambda_1}\| D^2_x
[R(\lambda)g]\|_{L^p}.
$$
where $\Lambda_1 = \{ g \in C^{2}_K, \, \|g\|_{L^p(\R^d)} \le 1 \}$.
 Now by Theorem \ref{PP} we know that, for   any $\lambda >
{\tilde \lambda_0}$,
$$
\sup_{g \in \Lambda_1}\| D^2_x
[R(\lambda)g]\|_{L^p} \le C',
$$
with $C' = C'(d,d_0, \eta,A,p)$.
 By choosing $\gamma $ small enough ($\gamma = \frac{1}{C'}$) we get that
\begin{equation}\label{cf7}
\| T(\lambda,z)    \|_{L (L^p ; \R)} =0,\;\;  \lambda > {\tilde \lambda_0}.
\end{equation}
Note that it is important that  $C'$ is independent of $\lambda$ (at least for $\lambda$ large enough); otherwise we
 should choose for any $\lambda$  a suitable constant $\gamma= \gamma(\lambda)$  and we could
  not conclude the   argument.

Formula \eqref{cf7}  shows that  \eqref{133} holds  and this finishes the proof.
\qed

\section{ Proofs of Theorems \ref{main1} and \ref{main2}}

\vv \textbf{ Proof of Theorem \ref{main1}.}
We will    apply the localization principle  (see Theorem \ref{uni1}) with ${ {\mathcal A}} = \L$ and $D({{\mathcal A}}) = C^2_K(\R^d)$.
The proof is divided into three steps.
In the first step we construct a suitable covering of $\R^d$;  in the second step we define  suitable operators ${\mathcal A}_j$ according to Theorem \ref{uni1}. In the final step we prove well-posedness of the martingale problem associated to each ${\mathcal A}_j$.

\hh{\it I Step.} There exists
     a  countable set of points $(z_j) \subset \R^d$,
$j \ge 1$, and numbers $\delta_j >0$ and $\eta_j >0$, $\eta_{j +1} \le \eta_j$, with the following properties:

\smallskip
(i) the open balls $B(z_j,  \delta_j)$
  form a covering for $\R^d$;

(ii)   we have:
\begin{gather} \label{ett}
  \eta_j |h|^2 \le  \langle Q_0 (z) h,h \rangle \le \frac{1}{\eta_j} |h|^2,\;\;\; h \in
\R^{d_0},\;\; z \in B(z_j, 2 \delta_j),
\\
\nonumber
\| Q_0(z) - Q_0(z_j) \| < \gamma_j, \;\;\;  z \in B(z_j, 2 \delta_j),
\end{gather}
where  $\gamma_j = \gamma (A, d_0,  \eta_j,  d)$ and $\gamma $ is given in Theorem \ref{cons}.

\smallskip
In order to  construct the previous  covering,
let $C_r$ be the closed ball of center $0$ and radius $r>0$ and let  $B(z,  r)$ be the open ball of center $z \in \R^d$ and radius $r$. We have
$
\R^d = \cup_{k \ge 1} D_k,
$
where $D_1 = C_1$ and $D_k = C_{k} \setminus B(0,k-1)$, $k \ge 2$, are compact sets. Let us consider $D_1$. There exists $\eta_0 >0$ such that
 \begin{equation} \label{ba34}
  \eta_0 |h|^2 \le  \langle Q_0 (z) h,h \rangle \le \frac{1}{\eta_0} |h|^2,\;\;\; h \in
\R^{d_0},\;\; z \in D_{1}.
\end{equation}
Indeed, since $z \mapsto Q_0(z)$ is continuous,  the minimum and the maximum eigenvalue of $Q_0(z)$ depend  continuously on $z$ (recall that $Q_0(z)$ is a positive definite $d_0 \times d_0$ matrix, for any $z \in \R^d$).
 In a similar way, there exists $\eta_k >0$ such that
\begin{equation*}
  \eta_{k-1} |h|^2 \le  \langle Q_0 (z) h,h \rangle \le \frac{1}{\eta_{k-1}} |h|^2,\;\;\; h \in
\R^{d_0},\;\; z \in D_{k},\;\; k \ge 1.
\end{equation*}
We may assume that $\eta_k \le \eta_{k-1}$ (hence, for instance, \eqref{ba34} holds for $z \in D_1 \cup D_2$ when $\eta_0$ is replaced by $\eta_1$).
  Using  the uniform continuity of $Q_0(z)$ on $D_1 \cup D_2$ and a compactness argument, we can find a finite sequence $(u_n^{1}) \subset D_1$ and numbers $(r_n^1) \subset (0, 1/2)$
    such that  $D_1 \subset
\cup_{n} B(u_n^{1}, r_n^1) $  and
$$
\|Q_0(z) - Q_0(u_n^1) \| < \gamma_1,
$$
for any $z \in B(u_n^{1}, 2r_n^1)$ and for any $n$. Here $\gamma_1 = \gamma (A, d_0,  \eta_1,  d)$ where $\gamma $ is given in Theorem \ref{cons}. Note that, since $B(u_n^{1}, 2r_n^1) \subset D_1 \cup D_2$, we have, for any $n$,
$$
\eta_{1} |h|^2 \le  \langle Q_0 (z) h,h \rangle \le \frac{1}{\eta_{1}} |h|^2,\;\;\; h \in
\R^{d_0},\;\; z \in B(u_n^{1}, 2r_n^1).
$$
Similarly, we can find a finite sequence $(u_n^{k}) \subset D_k $ such that  $D_k\subset
\cup_{n} B(u_n^{k}, r_n^k) $ with $0<r_n^k <1/2$ and
$$
\|Q_0(z) - Q_0(u_n^k) \| < \gamma_k,
$$
for any $z \in B(u_n^{k}, 2r_n^k)$, where $\gamma_k = \gamma (A, d_0,  \eta_k,  d)$   from Theorem \ref{cons}. Moreover, for any $n$,
\begin{equation*}
  \eta_{k} |h|^2 \le  \langle Q_0 (z) h,h \rangle \le \frac{1}{\eta_{k}} |h|^2,\;\;\; h \in
\R^{d_0},\;\; z \in B(u_n^{k}, 2r_n^k),\;\; k \ge 2.
\end{equation*}
 By considering all  the previous balls $B(u_n^{k}, r_n^k)$ and $(\eta_k)$
 we get the desired covering $\{ B(z_j, \delta_j) \}_{j \ge 1}$ which verifies \eqref{ett}.

  The balls
 $\{ B(z_j,  \delta_j) \}_{j \ge 1}$ give the covering $\{ U_j\}_{j \ge 1}$ used in  Theorem
 \ref{uni1}.

\hh \textit{II Step.}
   We define suitable operators ${{ {\mathcal A}}}_j$ such that
 \begin{align} \label{bo1}
 {{\mathcal A}}_j f(z) = { {\mathcal L}} f(z),\;\; z \in U_j=B(z_j,  \delta_j),\;\; f \in C^2_K(\R^d).
 \end{align}
 We fix $j \ge 1$ and consider
  $\rho_j \in C^{\infty}_K(\R^d)$ with $0 \le \rho_{j} \le 1$,
 $\rho_{j} =1$ in $B(z_j, \delta_j)$ and  $\rho_j =0$ outside $B(z_j, 2
 \delta_j )$. Now set
 $$
 Q^j_0(z) :=  \rho_{j}(z) Q_0(z) +  (1 - \rho_{j}(z))Q_0(z_j),\;\; z\in \R^d.
 $$
 We see that by \eqref{ett}
 \begin{gather} \label{ett3}
  \lan Q^j_0(z) h, h \ran =  \rho_{j}(z) \lan Q_0(z) h, h \ran
   +  (1 - \rho_{j}(z)) \lan Q_0(z_j) h , h \ran \ge \eta_j |h|^2,
 \\
\nonumber \lan Q^j_0(z) h, h \ran  \le \frac{1}{\eta_j} |h|^2, \;\; z \in \R^d,\;\; h \in \R^{d_0}.
 \end{gather}
  Moreover $Q^j_0(z) = Q_0(z)$, $z \in U_j$, and
 \begin{equation}\label{ett4}
 \| Q^j_0(z) - Q_0(z_j) \| < \gamma_j,
 \end{equation}
 for any $z \in \R^d$; recall that $\gamma_j = \gamma (A, d_0,  \eta_j,  d)$ and $\gamma $ is given in Theorem \ref{cons}.
 We finish the step by  defining
 $$
 {{\mathcal A}}_j f(z) = \frac{1}{2} \text{Tr}(Q^j_0(z)D^2_x f(z)) + \langle Az , Df(z) \rangle + \langle b_j(z), D_x f(z) \rangle,\;\; z \in \R^d,
 $$
  $f \in C^2_K (\R^d)$. Here $b_j = b_0 \cdot 1_{B(z_j,\delta_j)}$ ($1_{B(z_j,\delta_j)}$
  is the indicator function of  ${B(z_j,\delta_j)}$).

 \medskip \noindent \textit{III Step.} We show that the martingale problem for each ${\mathcal A}_j$ is well-posed. This will allow to apply Theorem \ref{uni1} and will finish the proof.

\smallskip First  note that the martingale problem for each $\L^{(j)}$,
$$
   \L^{(j)} f(z) = \frac{1}{2} \text{Tr}(Q^j_0(z)D^2_x f(z)) + \langle Az , Df(z) \rangle,
 $$
  $f \in C^2_K (\R^d)$, $z \in \R^d$, is well-posed by Theorem \ref{cons}. Indeed, using \eqref{ett3} and
  \eqref{ett4}, we see that the assumptions of Theorem \ref{cons} are satisfied.

Let us fix $j \ge 1$. By Theorem \ref{one} to prove the  well-posedness of the martingale problem associated to ${\mathcal A}_j$, it is enough to show that, for any $z \in \R^d$, the martingale problem for $({{\mathcal A}}_j, \delta_z)$ is well-posed.
Let   $z_0 \in \R^d$ and
 consider the SDE
\begin{equation}\label{by1}
    dZ_t
 = A Z_t dt
 \, + \,
\begin{pmatrix}
 b_j(Z_t) \\
0
\end{pmatrix}
 dt \, + \,
 \begin{pmatrix}
 \sqrt{Q_0^j(Z_t)} &
 0\\
 0 & 0
\end{pmatrix}   dW_t,\, \;\; Z_0 =z_0,
\end{equation}
where $\sqrt{Q_0^j(z)}$   denotes   the unique symmetric $d_0 \times d_0$ square root of $Q_0^j(z)$; note that $\sqrt{Q_0^j(z)}$
  is  a continuous  function of $z$. Moreover $W = (W_t)$ is a standard Wiener process with values in $\R^d$.
By Theorems \ref{iki} and \ref{eki} it is enough to prove the well-posedness of
the SDE \eqref{by1}.

Since the martingale problem for $\L^{(j)}$ is well-posed,   we know the well-posedness of the SDE
 \begin{equation}\label{by11}
    dZ_t
 = A Z_t dt
 \, + \,
 \begin{pmatrix}
 \sqrt{Q_0^j(Z_t)} & 0\\
 0 &
 0
\end{pmatrix}   dW_t,\, \;\; Z_0 =z_0.
\end{equation}
 An application of the Girsanov theorem
 (see Theorem IV.4.2 in \cite{IW})  allows to deduce that there exists a unique weak solution to
\begin{align} \label{by2}
     dZ_t
 = \Big ( A Z_t
 \, + \,
\begin{pmatrix}
 \sqrt{Q_0^j(Z_t)} &
  0\\
 0 &
 0
\end{pmatrix} a(Z_t) \Big)
 dt\end{align} $$ \, + \,
 \begin{pmatrix}
 \sqrt{Q_0^j(Z_t)} & 0\\
 0 &
 0
\end{pmatrix}   dW_t,\, \;\; Z_0 =z_0,
$$
 if $a: \R^d \to \R^d$ is any Borel and bounded function. By defining
 $$a(z) = \begin{pmatrix}
 ({Q_0^j(z)})^{-1/2} \, b_j(z) \\
0
\end{pmatrix}, \;\;\; z \in \R^d,$$ we obtain  that $a$ is bounded by \eqref{ett3} and moreover equation
  \eqref{by2} becomes equation \eqref{by1}. This proves the assertion and completes the proof.
 \qed

\begin{remark} \label{stro}
{\em   In the proof of the previous result we can not apply directly the results in Section 6.6 of  \cite{SV79} instead of Theorem \ref{uni1}.
Indeed the mentioned results in \cite{SV79} would require  to truncate both coefficients $ A z $ and $Q_0(z)$ on balls   in order to deal with  diffusions  with bounded coefficients.
The problem is that  if we truncate in the previous way and then   consider  the
 truncated mapping of $z \mapsto Az$ it becomes difficult   to prove the analytic  regularity results of  Sections 3.1 which are needed to prove well-posedness.}
\end{remark}

\noindent \textbf{ Proof of Theorem \ref{main2}.}  If we assume in addition that $b_0 :  \R^{d}  \to \R^{d_0}$ is continuous then the assertion follows immediately from Theorems \ref{sko} and \ref{main1}.

In the general case when $b_0$  is only locally bounded we have to use also Corollary \ref{exi2}. To this purpose we consider  the sequence $\{ U_{k}\}_{k \ge 1}$, where   
 $U_k = B(0, k)$ (the open ball of center $0$  and radius $k$). We show that there exist  linear operators ${\cal M}_{k}$ with common  domain $C^{2}_K$ such that 

\smallskip
\noindent (i)  for any $k \ge 1$, $f \in C^2_K $, we have
$
 {\cal M}_{k}  f(z) $ $= {\cal L}^{} f(z),$ $z \in U_k;
$

\noindent (ii)  the martingale problem for each ${\cal M}_{k} $ is well-posed;

\noindent  (iii) if $Z^k= Z^{k,z}$ is a (unique in law) martingale solution
for  $( {\cal M}_{k} , \delta_z)$, $z \in \R^d$,
 defined on a probability space $(\Omega_k,{\cal F}_k, P_k )$ and  
$
\tau_k =\tau_k^z  = \inf \{ t \ge 0 \; :\; Z_t^k \not \in U_k \}
$    we have, for any $t>0,$ $z \in \R^d$,
\begin{equation}\label{stop214}
    \lim_{k \to \infty} P_k (\tau_k \le t) =0.
\end{equation}
Once (i), (ii) and (iii) are proved, the assertion follows. 

\smallskip 
 For any $k \ge 1$ define  $\psi_k \in C^{\infty}_K(\R^d)$ such that $0 \le \psi_k \le 1$, $\psi_k(z) =1$ for $|z| \le k$ and $\psi_k(z) =0$ for $|z| \ge 2k$.  Since $z \mapsto Q_0(z)$ is continuous,  the minimum and the maximum eigenvalue of $Q_0(z)$ depend  continuously on $z$. Therefore,  
 there exists $\eta_k \in (0, 1)$ such that
\begin{equation*}
  \eta_{k} |h|^2 \le  \langle Q_0 (z) h,h \rangle \le \frac{1}{\eta_{k}} 
|h|^2,\;\;\; h \in
\R^{d_0},\;\; z \in \bar{U_{k}},\;\; k \ge 1.
\end{equation*}
Define the $d_0 \times d_0$-matrix 
$$
Q_0^k(z) = \psi_k (z) Q_0(z) + (1-\psi_k (z)) I_0,\;\;\; z \in \R^d,
$$
where $I_0$ is the $d_0 \times d_0$-identity matrix. 
It is clear  $Q_0^k(z) = Q_0(z),$ $ |z| \le k.$ Moreover,
\begin{align*}
 \eta_k |h|^2 \le \lan Q_0^k(z) h, h \ran \le \frac{1}{\eta_k} |h|^2, 
\;\; h \in \R^{d_0}, \;\; z \in \R^d,\;\; k \ge 1.
\end{align*}
Then, we set 
$$
{\cal M}_{k} f(z) = \frac{1}{2} \Tr(Q_0^k(z) D^2_x f(z)) + 
\lan b_k(z), D_x f(z) \ran
+ \lan A z , D f(z)\ran, \;\; \; f \in C^2_K,
$$
 $ z \in \R^d,$ where  $b_k = b_0 \cdot \psi_k$, $k \ge 1$.
By construction each ${\cal M}_{k} $ verifies condition (i). Moreover,
by using the Girsanov theorem as  in III Step of the proof of Theorem \ref{main1}, we know that 
 the martingale problem for ${\cal M}_{k} $ is well-posed if the martingale 
problem for ${\cal B}_{k}$ is well-posed,
$$
{\cal B}_{k} f(z) = \frac{1}{2} \Tr(Q_0^k(z) D^2_x f(z)) + 
 \lan A z , D f(z)\ran, \;\; \; f \in C^2_K.
$$
Let us fix $k \ge 1$. Using that the coefficients of ${\cal B}_{k} $ are continuous and that they growth at most linearly, we know by Theorem \ref{sko} that, for any $z \in \R^d$,
there exists a martingale solution for $({\cal B}_{k}, \delta_z)$. Since  ${\cal B}_{k}$ verifies Hypothesis \ref{hy} with $b_0=0$, we can apply  Theorem \ref{main1}
and obtain that the  martingale 
problem for ${\cal B}_{k}$ is well-posed. Thus  condition (ii) is verified for each ${\cal M}_{k} $.

 It remains to check (iii). Let us fix $z_0 \in \R^d$ and denote by $Z^k = (Z^k_t)$ a solution to the martingale problem for $({\cal M}_{k} , \delta_{z_0})$  defined on  $(\Omega_{k}, {\cal F}_{k}, P_{k})$.  
Let   $k $ large enough such that $z_0 \in U_k$ and consider the Lyapunov function $\phi$  (see \eqref{lia}). It is easy to see that there exists $\phi_k \in C^2_K(\R^d)$ such that $\phi(z) = \phi_k(z)$, $z \in U_k$.
By the optional stopping theorem  we know that
$$
 \phi_k (Z^k_{t \wedge \tau_k}) - \int_0^{t \wedge \tau_k} {\cal M}_k \phi_k(Z_s^k)ds
$$
is a martingale. Denoting by $E_{k} $ the expectation with respect to $P_{k}$, we find,  for $t \ge 0,$
$$
E_{k} [\phi (Z^k_{t \wedge \tau_k})] = \phi(z_0) +  E_{k} 
\Big[ \int_0^{t \wedge \tau_k} \L \phi (Z_s^k)ds \Big] \le \phi(z_0) +  C \int_0^{t } E_{k} [\phi (Z_{s \wedge \tau_k}^k)]ds.
$$
By the Gronwall lemma we get
$
E_{k} [\phi (Z^k_{t \wedge \tau_k})\,  1_{\{ \tau_k \le t\} }] \le \phi(z_0) \, e^{Ct},
$
so that $$\min_{|y|=k} \{ \phi(y)\} \cdot  P_{k} ( \tau_k \le t) \le \phi(z_0) e^{Ct},$$ $t \ge 0$. Since $\phi \to \infty$ as $|z| \to \infty$ we obtain \eqref{stop214}. The proof is complete.
\qed

\vskip 8mm

\noindent \textbf{Acknowledgement.}
The author would like to thank the  referees
 for their
 useful remarks and suggestions
 which helped to  improve the paper.

\vskip 8mm


\appendix

\section{
Appendix: the  localization principle
for martingale problems
}

The   localization principle  introduced by  Stroock and Varadhan (see \cite{SV69} and
 \cite{SV79})
  says, roughly speaking, that to prove uniqueness in law
  it suffices to show that each starting point has a
 neighbourhood on which the diffusion coefficients equal other coefficients for which uniqueness
holds
  (see also \cite{letta, Ko}).
 Martingale problems and localization principle  have been extensively investigated in Chapter  4 of \cite{EK}
 in the setting of
 a complete and separable  metric space $E$. This generality allows
  applications of the martingale problem
   to branching processes
  (see Chapter 9 in \cite{EK}) and to SPDEs
 (see, for instance, \cite{DD, DZ} and the references therein).

\smallskip
{\sl In this appendix  we present some extensions and
  modifications of  theorems  given in Sections 4.5 and 4.6 of \cite{EK}.
  Our main results are in Section A.3 (see in particular Theorem \ref{key} and  Lemma \ref{stop}).
 As a consequence we get
 the localization principle (see Theorem \ref{uni1}) which is an
 extension of Theorem 4.6.2   in \cite{EK} and of Theorem 6.6.1 in \cite{SV79}. }
\smallskip

     Unlike Sections 4.5 and 4.6 of \cite{EK} which  mainly deal with  c\`adl\`ag martingale solutions here we always work with martingale solutions with continuous paths. It is not straightforward to extend results in \cite{EK} about the localization principle from c\`adl\`ag to  continuous martingale solutions; see in particular Lemma 4.5.16 in \cite{EK}.
On the other hand,
proving well-posedness   in the class of  c\`adl\`ag solutions
can be more difficult than in the class of continuous solutions. This is particularly important for martingale problems related to SPDEs with Wiener noise  (see, for instance, the recent paper \cite{bass} where  infinite dimensional OU type processes are considered). Our localization principle can also be applied to  such equations.


    Another difference  with respect to \cite{EK},
  is that we always
assume  that the linear operator $A$ appearing in the martingale problem
 is countably pointwise determined
  (see Hypothesis \ref{bpt}).   This assumption  is usually satisfied in applications and  allows to improve some results from \cite{EK} (see, in particular, Section A.2).

\subsection{Basic definitions} In this appendix $E$ will denote a \textit{complete and separable metric space} endowed with its $\sigma$-algebra of Borel sets ${\cal B}(E) $. The  space of all real bounded and Borel functions on $E$
is indicated with $B_b(E)$. It is a Banach space with the supremum norm $\| \cdot \|_{\infty}$.
Its closed subspace $C_b(E)$ is the space of all real bounded and continuous functions on $E$.  We will also consider  the space  $C_E[0, \infty)$ of all  continuous functions from $[0, \infty)$ into $E$.  This is a
 complete and separable  metric space endowed
with the metric of uniform convergence on compact sets of $[0,
\infty)$. In addition ${\cal P}(E)$
denotes the metric space   of
all Borel probability measures  on $E$
endowed with the   Prokhorov
metric which induces the weak convergence of measures. It  is a
complete and separable metric space (see Chapter 3 in \cite{EK}).
Its Borel $\sigma$-algebra is denoted by ${\cal B}({\cal P}(E))$.

 \hh Let us fix a linear operator $A$ with domain $D(A) \subset C_b(E)$
 taking values in $B_b(E)$, i.e.,
\begin{align} \label{lin}
  A: D(A) \subset C_b(E) \to B_b(E) \;\; \text {is linear.}
\end{align}
 Let $\mu \in {\cal P}(E)$.
An $E$-valued stochastic process $X = (X_t)= (X_t)_{  t \ge 0 }$
defined on some probability space $(\Omega, {\cal F}, P)$
with continuous trajectories is a \textit { solution of the martingale
problem for $(A, \mu)$} if,
 for any $f \in D(A)$,
\begin{equation}\label{mart}
    M_t(f) = f(X_t) - \int_0^t A f(X_s) ds, \;\; t \ge 0, \;\; \text{is a
    martingale}
\end{equation}
(with respect to the natural filtration $({\cal F}_t^X)$,
 where ${\cal F}_t^X = \sigma(X_s \, : \, 0 \le s \le t)$ is the  $\sigma$-algebra generated by the random variables
$X_s$, $0 \le s \le t$),
  and moreover, the law of $X_0$ is $\mu.$

\hh Comparing with   \cite{EK} we only consider solutions $X$ to the
 $C_E[0, \infty)$-martingale  problem for $(A,\mu)$ (see also Remark \ref{serve}).

\hh It is also convenient to call a Borel probability $P$ on $C_E[0, \infty)$
(i.e., $P \in {\cal P}(C_E[0, \infty))$)
a {\it (probability) solution of the martingale problem} for $(A, \mu)$ if the {\it canonical process}
 $X = (X_t)$ defined on $(C_E[0, \infty),$ $ {\cal B}(C_E[0, \infty)), P )$ by
  \begin{equation}\label{can}
    X_t(\omega) = \omega(t),\;\;\; \omega \in C_E[0, \infty), \;\; t \ge 0,
 \end{equation}
 is a solution of  the martingale problem for $(A, \mu)$.

  The martingale  property \eqref{mart} only concerns the finite dimensional distribution of $X$.
In fact it is equivalent to the following property: {\it for arbitrary $0 \le t_1 < \ldots < t_n < t_{n+1}$, $f \in D(A)$ and  arbitrary $h_1, \ldots, h_n
\in C_b (E)$, we have}
\begin{equation}\label{equi}
E \big[ \big( M_{t_{n+1}}(f) - M_{t_{n}}(f) \big) \cdot \prod_{k=1}^n h_k(X_{t_k})
\big] =0.
\end{equation}
Hence $X$ is a martingale solution for $(A, \mu )$ if and only if its law
 on $(C_E[0, \infty),$ $ {\cal B}(C_E[0, \infty)))$ is a martingale
 solution for $(A, \mu)$.

\begin{remark} \label{serve} {\em  We give additional comments motivated by
  \cite{EK}.

i) We have required that a solution has sample paths in $C_E[0, \infty)$.
 On the other hand as in \cite{EK}
one can also consider   martingale solutions $X$  which have c\`adl\`ag trajectories, that is,
 they have sample paths in $D_E[0, \infty)$ ($D_E[0, \infty)$ denotes  the complete and separable  metric space   of all  c\`adl\`ag functions from $[0, \infty)$ into $E$  endowed with the Skorokhod metric).

 The book \cite{EK} treats even more general
   martingale solutions $X$  without c\`adl\`ag trajectories. Moreover in \cite{EK}
   the reference filtration $({\cal G}_t)$ can be larger than $({\cal F}_t^X)$; this allows to obtain the Markov property with respect to $({\cal G}_t)$
    when the martingale problem is well-posed.

 ii) Recall that, for any $x \in E$, $\delta_x \in {\mathcal P}(E)$ is defined
by
 \begin{align} \label{delta}
 \delta_x(A)= 1_A(x),
 \;\;\; x \in E, \;\; A \in {\cal B}(E).
 \end{align}
(where $1_A(x) =1$ if $x \in A$ and $1_A(x) =0$ if $x \not \in A$). According to  Theorem 4.3.5 in \cite{EK}  if
there exists a solution $X_x$
 of the martingale problem for $(A, \delta_x)$ for any $x \in E$ then $A$ is dissipative, i.e., $
\lambda \| f\|_{\infty}$ $ \le \| \lambda f - Af\|_{\infty},$ $ \lambda>0,$ $ f \in D(A).
$
Further  relations between
   the martingale problem and  semigroup theory of linear operators are investigated in \cite{EK}.
   }
\end{remark}
\begin{definition} \label{baa}
Let $\mu \in {\cal P}(E)$.  We say that  uniqueness holds for the martingale  problem for $(A, \mu)$
if all the  solutions $X$  have the same finite dimensional distributions
(i.e., all the solutions  $X$   have the same law on
 $C_E[0, \infty)$, i.e., all (probability) martingale solutions $P$ coincide on
  ${\cal B}(C_E[0, \infty)))$.

The martingale problem for $(A, \mu)$ is well-posed if there exists a martingale solution
 for $(A, \mu)$ and, moreover,
 uniqueness holds  for  the martingale problem for $(A, \mu)$.

 Finally, the martingale problem for $A$ is well-posed if the martingale problem for $(A, \mu)$ is well-posed for any $\mu \in {\cal P}(E)$.
\end{definition}
Next  we  consider boundedly and pointwise convergence for
multisequences of functions similarly to \cite{EK}, page 111, and
\cite{DT}.
 \begin{hypothesis} \label{bpt}
{\em  A linear operator $A: D(A) \subset C_b(E) \to B_b(E)$ is countably pointwise determined (c.p.d.)
  if there exists a countable subset
 $H_0 \subset D(A)$ such that for any $f \in D(A)$ there exists an $m$-sequence of functions $(f_{n_1, \ldots, n_m}) \subset H_0$, $(n_1, \ldots, n_m) \in \mathbb{N}^m$, $m \ge 1$,
 such that   $(f_{n_1, \ldots, n_m})$
  and $(Af_{n_1, \ldots, n_m})$ converge boundedly and pointwise respectively to $f $
 and $Af$. This means that  there exists $C>0$
 such that
$ \| f_{n_1, \ldots, n_m}\|_{\infty} $ $+ \| Af_{n_1, \ldots, n_m}\|_{\infty} \le C $,  for any $(n_1, \ldots, n_m) \in \N^m$,
 and moreover }
$$
\begin{array}{c}
\lim_{n_1 \to \infty} \ldots (\lim_{n_{m-1} \to \infty}(\lim_{n_m \to \infty} f_{n_1, \ldots, n_m}(x))) = f(x), \;\; x \in E.
\\
 \lim_{n_1 \to \infty} \ldots (\lim_{n_{m-1} \to \infty}(\lim_{n_m \to \infty} Af_{n_1, \ldots, n_m}(x))) = Af(x), \;\; x \in E. \qed
\end{array}
$$
\end{hypothesis}
In particular  $A$ is c.p.d.    if there exists a separable subspace $M$
 of $C_b(E)$  such that $\{(f, Af)\}_{f \in D(A)} $ $\subset M \times M$.

{\sl It is easy to verify that if Hypothesis \ref{bpt} holds for $A$ then it is enough to check the martingale property \eqref{mart} only for  $f \in H_0$ in order to have a martingale solution.
}

\subsection{Preliminary results}

 Results and arguments of this section
 are quite similar to those
given in Chapter 6 of \cite{SV79}
   (see also \cite{Ka, KS}) even if here we are in the general setting of
   martingale solutions with values in a Polish space.
 We include self-contained  proofs for the sake of completeness.

Assuming Hypothesis \ref{bpt} to prove well-posedness
 we only have to check that the martingale problem  is
 well-posed for any initial distribution $\delta_x$, $x \in E$
 (see \eqref{delta}).

The first  result deals with uniqueness of the martingale problem
for $(A, \delta_x)$   for any $x \in E$ (cf. Theorem 6.2.3 in
\cite{SV79} and Theorem 4.27 in \cite{KS})). It is  a variant    of
Theorem 4.4.6 in \cite{EK} which  considers the case when, starting
from {\it any initial distribution} $\mu \in {\cal P}(E)$, any two
martingale solutions have the same marginals.
\begin{theorem}
\label{counta} Suppose that the operator $A$ satisfies Hypothesis \ref{bpt}.
 Suppose that, for any $x \in E$, any two (probability) martingale
  solutions $P^x_1$ and $P^x_2$ for
$(A, \delta_x)$ have the same one dimensional
 marginal distributions, i.e.,
\begin{equation}\label{marg}
    P_1^x (X_t \in B) =  P_2^x (X_t \in B),\;\; t \ge 0,\;
    \; B \in {\cal B}(E),
\end{equation}
where $(X_t)$ denotes the canonical process in \eqref{can}.
 Then, for any $x \in E$, there exists at most one
   martingale solution for  $(A, \delta_x)$.
\end{theorem}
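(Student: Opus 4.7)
The plan is to prove equality of $n$-dimensional marginals of any two probability martingale solutions $P_1^x, P_2^x$ for $(A, \delta_x)$ by induction on $n$. The base case $n=1$ is exactly the hypothesis \eqref{marg}. For the inductive step at an arbitrary $0 \le t_1 < \ldots < t_n < t_{n+1}$, I would disintegrate each $P_i^x$ on the Polish space $C_E[0,\infty)$ by a regular conditional probability $\pi_i^\omega$ given $\mathcal{F}_{t_n}^X$, and show that for $P_i^x$-a.e.\ $\omega$, the law under $\pi_i^\omega$ of the time-shifted canonical process $\theta_{t_n}X := X_{\,\cdot\,+t_n}$ is a probability martingale solution for $(A,\delta_{X_{t_n}(\omega)})$. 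Once this is established, the one-dimensional uniqueness hypothesis forces the law of $X_{t_{n+1}}$ under $\pi_1^\omega$ and under $\pi_2^\omega$ to agree and to depend only on $X_{t_n}(\omega)$ through a common kernel $y\mapsto \mu_y(\cdot)$. Writing
\[
P_i^x\big(X_{t_1}\in B_1,\ldots,X_{t_{n+1}}\in B_{n+1}\big) = E^{P_i^x}\Big[1_{B_1}(X_{t_1})\cdots 1_{B_n}(X_{t_n})\,\mu_{X_{t_n}}(B_{n+1})\Big],
\]
the inductive hypothesis applied to the bounded Borel function $(y_1,\ldots,y_n)\mapsto 1_{B_1}(y_1)\cdots 1_{B_n}(y_n)\mu_{y_n}(B_{n+1})$ yields equality for $i=1,2$, closing the induction.

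The key (and technical) step is verifying that $\pi_i^\omega\circ\theta_{t_n}^{-1}$ solves the martingale problem for $(A,\delta_{X_{t_n}(\omega)})$ for $P_i^x$-a.e.\ $\omega$. Using the equivalent formulation \eqref{equi}, this amounts to checking, for every $f\in D(A)$, every pair of nonnegative rationals $s<t$, every finite sequence of nonnegative rationals $s_1<\cdots<s_k\le s$, and every tuple $h_1,\ldots,h_k\in C_b(E)$, an almost-sure identity between two conditional expectations obtainable from the martingale property of $f(X_\cdot)-\int_0^\cdot Af(X_r)\,dr$ under $P_i^x$ and the defining property of $\pi_i^\omega$. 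For each fixed such choice this identity holds off a $P_i^x$-null set; continuity of sample paths lets us restrict the time parameters $s<t$ and $s_1,\ldots,s_k$ to rationals.

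The main obstacle is controlling the union of these exceptional null sets so that a single $P_i^x$-null set works uniformly: this is exactly where Hypothesis \ref{bpt} is indispensable. I would fix the countable set $H_0\subset D(A)$ supplied by the hypothesis and a countable family $\mathcal{H}_0\subset C_b(E)$ that is measure-determining on $E$ (available since $E$ is Polish, hence $C_b(E)$ admits such a countable separating subfamily). Letting $f$ range over $H_0$, the $h_j$ over $\mathcal{H}_0$, and all time parameters over rationals, we have countably many conditions, hence a single $P_i^x$-null exceptional set $N_i$. Off $N_i$, the shifted process satisfies \eqref{equi} for all $f\in H_0$; the bounded convergence theorem together with the countably-pointwise-determined structure extends this to all $f\in D(A)$, and a standard monotone class argument upgrades the $\mathcal{H}_0$-test functions to arbitrary $h_j\in C_b(E)$. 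Continuity of paths and the fact that $\pi_i^\omega$ is supported on $\{\omega': \omega'(t_n)=X_{t_n}(\omega)\}$ (a property of any regular conditional probability on $C_E[0,\infty)$ given $\mathcal{F}_{t_n}^X$) identify the starting measure as $\delta_{X_{t_n}(\omega)}$, concluding the verification.
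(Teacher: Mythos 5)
Your overall strategy is the paper's: induction on the number of time points, regular conditional probabilities on the Polish path space $C_E[0,\infty)$, verification that the shifted conditional law solves the martingale problem off a single null set by reducing to a countable family of test functionals (functions from $H_0$ of Hypothesis \ref{bpt}, rational times, a countable measure-determining class of test functions, plus path continuity and bounded convergence). That part of your argument is sound and matches Step I of the paper's proof of Theorem \ref{counta} essentially verbatim.

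The gap is in how you close the induction. You condition on $\mathcal{F}_{t_n}^X$ and then assert that the conditional law of $X_{t_{n+1}}$ factors through $X_{t_n}$ via a \emph{common Borel kernel} $y\mapsto\mu_y(\cdot)$, to which you then apply the inductive hypothesis. Two things are missing there. First, your exceptional sets $N_1$ and $N_2$ live in $\mathcal{F}_{t_n}^X$, and at stage $n$ of the induction you only know that $P_1^x$ and $P_2^x$ agree on finite-dimensional $\sigma$-algebras $\sigma(X_{s_1},\dots,X_{s_n})$, not on $\mathcal{F}_{t_n}^X$; so a $P_1^x$-null set need not be $P_2^x$-null, and the statement that $\pi_1^\omega$ and $\pi_2^\omega$ agree ``for a.e.\ $\omega$'' does not make sense for a common reference measure. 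Second, the map $\omega\mapsto\pi_i^\omega(X_{t_{n+1}}\in B)$ is only $\mathcal{F}_{t_n}^X$-measurable and is a.e.\ constant on the fibres of $X_{t_n}$; extracting from this a single \emph{Borel} function $y\mapsto\mu_y(B)$ on $E$ that works simultaneously for $i=1,2$ (which you need in order to apply the inductive hypothesis to $(y_1,\dots,y_n)\mapsto 1_{B_1}(y_1)\cdots 1_{B_n}(y_n)\mu_{y_n}(B_{n+1})$) is not automatic and you give no argument. Both problems disappear if you condition instead on the finite-dimensional $\sigma$-algebra $\mathcal{G}=\sigma(X_{t_1},\dots,X_{t_n})$, which is exactly what the paper does: then $N_1\cup N_2\in\mathcal{G}$ is null for both measures because $P_1^x=P_2^x$ on $\mathcal{G}$ by the inductive hypothesis, the one-dimensional hypothesis gives $\pi_1^\omega(X_{t_{n+1}}\in B)=\pi_2^\omega(X_{t_{n+1}}\in B)$ off that common null set, and since both sides are $\mathcal{G}$-measurable one can swap $\pi_1^\omega$ for $\pi_2^\omega$ and then $P_1^x$ for $P_2^x$ inside the expectation without ever constructing a kernel on $E$. (Alternatively your route can be patched by passing to $E^{P_i}[\,\cdot\,|\,\sigma(X_{t_n})]$ via the tower property and Doob--Dynkin and identifying the two Borel versions a.e.\ with respect to the common law of $X_{t_n}$, but the finite-dimensional conditioning is cleaner and is the paper's choice.)
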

\begin{proof} Let $P^x_1 = P_1$ and $P_2^x = P_2$ and set
 $\Omega = C_E[0, \infty)$ endowed with the Borel
$\sigma$-algebra ${\cal F} =  {\cal B}(C_E[0, \infty))$.  Take any
 sequence $(t_k) \subset [0, \infty)$, $0 \le t_1 < \ldots < t_n < \ldots$.
 It is enough to show that, for any $n \ge 1$,
$P_1$ and $P_2$ coincide on the $\sigma$-algebra  $\sigma(X_{t_1}, \ldots, X_{t_n}) $
  generated by $X_{t_1}$,  $\ldots, X_{t_n}$.
  To show this we use induction on $n$.
 For $n=1$ the assertion follows from  \eqref{marg}.
 We assume that the assertion holds
for $n-1$ with $n \ge 2$ and  prove it for $n$. Set
$$
{\cal G} = \sigma(X_{t_1}, \ldots, X_{t_{n-1}}).
$$
We know that $P_1$ and $P_2$ coincide on $\cal G$. Since $\Omega = C_E[0, \infty)$
 is a complete and separable metric space, by applying Theorem 3.18, page 307
 in \cite{KS} there exists
 a regular conditional probability $Q_1^{\omega}$ for $P_1$ given $\cal G$; this satisfies:

 a) for any $\omega \in \Omega$, $Q_1^{\omega}$ is a probability on $(\Omega, {\cal F})$;

b) for any $A \in {\cal F}$, the map: $\omega \mapsto Q_1^{\omega}(A)$ is ${\cal G}$-measurable;

c) for any $A \in {\cal F}$, $Q_1^{\omega}(A)= P_1(A / {\cal G})(\omega)
 := E^{P_1}[1_A / {\cal G}](\omega)$,
  $P_1$-a.s..

\hh
By $E^{P_1}[1_A / {\cal G}]$ we have indicated the conditional expectation of $1_A$
 with respect to ${\cal G}$ in $(\Omega, {\cal F}, P_1)$.
Moreover, since ${\cal G}$ is countable determined (i.e., there exists a countable set ${\cal M} \subset {\cal G}$ such that whenever two probabilities agree on $\cal M$ they also agree on $\cal G$) we also have that there exists $N' \in {\cal G} $ with $P_1(N')=0$ and
\begin{align}\label{con}
Q_1^{\omega} (A) = 1_A(\omega), \;\;\; A \in {\cal G},  \; \omega \not \in N'.
\end{align}
Now the proof continues  in two  steps.

\hh {\it I Step.} \textit{We  show that there exists a $P_1$-null set $N_1 \in {\cal G}$ such that,
 for any $\omega \not \in N_1$, the probability measure
  $R_1^{\omega} = Q_1^{\omega} \circ \theta_{t_{n-1}}^{-1}$, i.e.,
$$
R_1^{\omega}(B) = Q_1^{\omega} \big ( (\theta_{t_{n-1}})^{-1} (B ) \big),\;\;\; B \in {\cal F},
$$
solves the martingale problem for $(A, \delta_{\omega(t_{n-1})})$.}

Here $
\theta_{t_{n-1}} : \Omega $ $ \to \Omega$ is a shift operator, i.e., $\theta_{t_{n-1}}(\omega)(s)
 = \omega(s+ {t_{n-1}})$, $s \ge 0$.
  It is clear by \eqref{con} that there exists a  $P_1$-null set $N'
 \in {\cal G}$ such that for any $\omega \not \in N'$,
$$
R_1^{\omega} (\omega' \in \Omega\, : \, \omega'(0)= \omega(t_{n-1}))
 = Q_1^{\omega} (\omega' \in \Omega \, : \, \omega'(t_{n-1}+0)=
 \omega(t_{n-1})) =1.
$$
To prove the martingale property \eqref{equi}
we first  introduce the
family $\cal S$
   of all finite intersections of open balls $B(x_i, 1/k)\subset E$,
   where $k \ge 1$ and  $x_i \in E_0$ with $E_0$ a fixed
    countable and dense subset of
   $E$, and then
  consider the countable set $\Gamma$ of bounded
  random variables $\eta: \Omega \to \R$ of the form
 \begin{equation} \label{etaa}
\begin{array}{c}
\eta = \big( M_{s_{m+1}}(f) - M_{s_{m}}(f) \big) \cdot \prod_{k=1}^m
h_k(X_{s_k})
\\
\nonumber = \Big( f(X_{s_{m+1}}) - f(X_{s_{m}}) -
\int_{s_m}^{s_{m+1}} Af(X_r) dr  \Big) \cdot \prod_{k=1}^m
h_k(X_{s_k}),
\end{array}
\end{equation}
where  $f \in H_0$ (see Hypothesis \ref{bpt}),
 $0 \le s_1 < \ldots < s_m < s_{m+1}$, $m \ge 1$,
 are arbitrary rational numbers,
 $h_k$ are indicator functions of sets
 in ${\cal S}$ and $(X_t)$ is the canonical process.
  By using a monotone class argument it is not difficult to see that
$R_1^{\omega}$ solves the martingale problem for $(A,
\delta_{\omega(t_{n-1})})$ if and only if
 $\int_{\Omega} \eta (\omega')R_1^{\omega} (d \omega') =0$
 for any $\eta \in
 \Gamma.$

 Therefore the claim follows if we  prove that for a fixed $\eta \in \Gamma$
there exists a $P_1$-null set $N \in {\cal G}$ (possibly depending on $\eta$)
 such that  for any $\omega \not \in N$,
\begin{equation*}
    \int_{\Omega} \eta (\omega')R_1^{\omega} (d \omega') =0.
\end{equation*}
To show that the ${\cal G}$-measurable random variable $\omega \mapsto
 \int_{\Omega} \eta (\omega')R_1^{\omega} (d \omega')$ is 0, $P_1$-a.s., it is enough to prove that, for any $G \in {\cal G} = \sigma(X_{t_1}, \ldots, X_{t_{n-1}})$,
\begin{equation*}
 \int_{\Omega} \Big[
 1_G(\omega)
   \int_{\Omega} \eta (\omega')R_1^{\omega} (d \omega')  \Big] P_1 (d \omega) =0.
\end{equation*}
  We have
$$\begin{array}{c}
\int_{\Omega} \Big[
 1_G (\omega)
  \int_{\Omega} \eta (\omega')R_1^{\omega} (d \omega')  \Big] P_1 (d \omega)
\\
=\int_{\Omega} \Big[
 1_G (\omega)
  \int_{\Omega} \Big(  \big( M_{s_{m+1}+ t_{n-1}}(f) - M_{s_{m}+ t_{n-1}}(f)
   \big) \cdot
 \\
\cdot \prod_{k=1}^m h_k(X_{s_k + t_{n-1}})  \Big)(\omega')
 Q_1^{\omega} (d \omega')  \Big] P_1 (d \omega)
  \\ \\
 =  E^{P_1} \big[
 1_G  \, E^{P_1} [  \eta  \circ \theta_{t_{n-1}} /  {\cal G}] \big]
 =  E^{P_1} \big[
  \, E^{P_1} [  (\eta  \circ \theta_{t_{n-1}})
  1_G /  {\cal G}] \big] \\ =   E^{P_1} [  \big( M_{s_{m+1}+ t_{n-1}}(f) - M_{s_{m}+ t_{n-1}}(f)
   \big)
\cdot \prod_{k=1}^m h_k(X_{s_k + t_{n-1}}) \cdot  1_G ] =0
 \end{array}$$
(in the last passage we have used that $P_1$ is a martingale solution).

\hh {\it II Step. } {\it We show that $P_1$ and $P_2$ coincide on
 $\sigma(X_{t_1}, \ldots, X_{t_{n}})$}.

Repeating the previous step for the measure $P_2$ we define
  $Q_2^{\omega}$ (the regular conditional probability for $P_2$ given $\cal G$)
  and  $R_2^{\omega} = Q_2^{\omega} \circ \theta^{-1}_{t_{n-1}}$. We find that there exists  a $P_2$-null set $N_2 \in {\cal G}$ such that
 for any $\omega \not \in N_2$, the probability measure
$
R_2^{\omega} $
solves the martingale problem for $(A, \delta_{\omega(t_{n-1})})$.

Since $P_1$ and $P_2$ coincide on ${\cal G}$, the set $N'= N_1 \cup N_2$
verifies $P_k(N')=0$, $k=1,2$.  By hypothesis, for any $\omega \not \in N'$
we know that $R_1^{\omega}$ and $R_2^{\omega}$ have the same one-dimensional
 marginals. Therefore, for any $A \in {\cal B}(E^{n-1})$, $B \in {\cal B}(E)$, we find
$$
\begin{array}{c}
P_1 \big( \omega \in \Omega \, : \, (\omega(t_1), \ldots, \omega(t_{n-1})) \in A, \omega(t_n) \in B \big)\\
= E^{P_1} [1_{ \{ \omega : (\omega(t_1), \ldots, \omega(t_{n-1})) \in A \}} \, R_1^{\omega}
(\omega \in \Omega \, : \, \omega(t_n - t_{n-1}) \in B)  ]
\\
= E^{P_1} [1_{ \{ \omega : (\omega(t_1), \ldots, \omega(t_{n-1})) \in A \}} \, R_2^{\omega}
(\omega \in \Omega \, : \, \omega(t_n - t_{n-1}) \in B)  ].
\end{array}
$$
Since $\omega \mapsto R_2^{\omega}
(\omega \in \Omega \, : \, \omega(t_n - t_{n-1}) \in B) $ is $\cal G$-measurable
 and $P_1= P_2$ on ${\cal G}$ we
get
$$
\begin{array}{c}
P_1 \big( \omega \in \Omega \, : \, (\omega(t_1), \ldots,
\omega(t_{n-1})) \in A, \omega(t_n) \in B \big)
\\
= E^{P_2} [1_{ \{ \omega : (\omega(t_1), \ldots, \omega(t_{n-1}))
\in A \}} \, R_2^{\omega} (\omega \in \Omega \, : \, \omega(t_n -
t_{n-1}) \in B)  ]\\
 = P_2 \big( \omega \in \Omega \, : \,
(\omega(t_1), \ldots, \omega(t_{n-1})) \in A, \omega(t_n) \in B
\big).
\end{array}
$$
This finishes the proof.
\end{proof}

\hh Recall that a family of measures    $(P^x)=(P^x)_{x \in E}
\subset {\cal P}(C_E[0, \infty))$ \textit{ depends measurably on $x$} (cf. Lemma 1.40 in \cite{Ka}) if for
any $B \in {\cal B}(C_E[0, \infty))$, the mapping:
\begin{align} \label{mes1}
 x \mapsto P^x(B) \;\;
 \text{is measurable from $E$ into $[0,1]$.}
\end{align}
 Suppose that, for any $x \in E$,
 there exists a martingale solution $P^x$ on
  ${\cal B}(C_E[0, \infty))$
  for $(A, \delta_x)$.
If  $(P^x)$ depends measurably on $x$ then it is easy to check that, for any initial
distribution $\mu \in {\cal P}(E)$, there exists a  martingale solution $P^{\mu}$ for $(A, \mu)$ which is given by
\begin{equation}\label{es}
P^{\mu}(B) = \int_E P^x(B) \mu (dx), \;\;\; B \in {\cal B}(C_E[0,
\infty)).
\end{equation}
Usually, $(P^x)$ depends measurably on $x$ if one provides a
constructive proof for existence of   martingale solutions. On the
other hand, the
next theorem shows that uniqueness implies this measurability property.
  This  result
    is a kind of extension  of  Theorem 4.4.6  in \cite{EK}
 (in fact in \cite{EK} it is required
  that the martingale problem is well-posed for any initial $\mu \in {\cal P}(E)$).
\begin{theorem} \label{one}
 Suppose that   $A$ satisfies Hypothesis \ref{bpt}.
Suppose that, for any $x \in E$, there exists a unique (probability) martingale
solution
 $P^x$ for $(A, \delta_x)$.

 Then $(P^x) $ depends measurably on $x$ and for
any initial distribution $\mu \in {\cal P}(E)$ there exists a unique (probability)
martingale solution $P^{\mu}$ given by \eqref{es}. In particular
   the martingale problem for $A$ is well-posed.
 \end{theorem}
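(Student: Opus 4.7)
The plan is to prove three things in order: (i) the map $x \mapsto P^x$ is Borel measurable; (ii) the formula \eqref{es} defines a martingale solution for $(A,\mu)$; and (iii) any martingale solution for $(A,\mu)$ coincides with \eqref{es}. Hypothesis \ref{bpt} will be used critically in (i) and (iii) to reduce the martingale property to a countable list of conditions.

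For (i), observe first that by Hypothesis \ref{bpt} and a bounded/pointwise convergence argument, a probability $Q\in\mathcal{P}(C_E[0,\infty))$ is a martingale solution for $(A,\delta_x)$ if and only if $Q(X_0=x)=1$ together with the countable family of identities \eqref{equi} where $f$ runs over the countable set $H_0$, the times $0\le s_1<\cdots<s_{m+1}$ are rational, and the factors $h_1,\dots,h_m$ are indicators of finite intersections of open balls with rational radii centered at points of a countable dense subset of $E$. Each such identity is of the form $E^Q[\Phi]=0$ for a bounded Borel $\Phi:C_E[0,\infty)\to\R$, hence defines a Borel subset of $\mathcal{P}(C_E[0,\infty))$, since $Q\mapsto E^Q[\Phi]$ is Borel measurable. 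Intersecting countably many such sets and the Borel set $\{(x,Q):Q(X_0=x)=1\}$, we obtain a Borel set
\[
\Gamma=\{(x,Q)\in E\times\mathcal{P}(C_E[0,\infty))\,:\,Q\text{ solves the martingale problem for }(A,\delta_x)\}.
\]
By the uniqueness hypothesis, $\Gamma$ is the graph of the single-valued function $x\mapsto P^x$, and since $\Gamma$ is Borel in the product of two Polish spaces with singleton fibres, a standard measurable graph theorem (e.g.\ Kuratowski--Ryll-Nardzewski, or Theorem 14.12 in Kechris's \emph{Classical Descriptive Set Theory}) yields that $x\mapsto P^x$ is Borel measurable, i.e.\ \eqref{mes1}.

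For (ii), measurability lets us define $P^\mu$ by \eqref{es}. Under $P^\mu$, the law of $X_0$ is $\int\delta_x\,\mu(dx)=\mu$. The martingale property for $P^\mu$ is then obtained by integrating: for any $f\in D(A)$, any rationals $0\le s_1<\cdots<s_{m+1}$, and any bounded continuous $h_1,\dots,h_m$, the identity \eqref{equi} holds under each $P^x$, so integrating against $\mu(dx)$ gives the same identity under $P^\mu$; by the usual monotone class argument \eqref{equi} then extends to all required $h_k$, showing that $M_t(f)$ is a $P^\mu$-martingale with respect to $(\mathcal{F}_t^X)$.

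The main obstacle is (iii). Given any martingale solution $P$ for $(A,\mu)$ on $\Omega=C_E[0,\infty)$, since $\Omega$ is Polish we may disintegrate $P$ with respect to $\sigma(X_0)$: there exists a Borel family $(R^x)_{x\in E}\subset\mathcal{P}(\Omega)$ with $R^x(\omega(0)=x)=1$ for $\mu$-a.e.\ $x$ and $P(B)=\int_E R^x(B)\,\mu(dx)$ for every $B\in\mathcal{B}(\Omega)$. It suffices to show that $R^x$ solves the martingale problem for $(A,\delta_x)$ for $\mu$-a.e.\ $x$, since uniqueness will then force $R^x=P^x$ $\mu$-a.s.\ and hence $P=P^\mu$. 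Fix one of the countably many test functionals $\eta$ built as in \eqref{etaa} from $f\in H_0$, rational times, and the countable class of factors $h_k$ used above. The martingale property for $P$ yields $E^P[\eta\cdot 1_{\{X_0\in B\}}]=0$ for every $B\in\mathcal{B}(E)$, which by disintegration means $\int_B E^{R^x}[\eta]\,\mu(dx)=0$ for all $B$, and hence $E^{R^x}[\eta]=0$ outside a $\mu$-null set $N_\eta$. Taking the countable union $N=\bigcup_\eta N_\eta$, for every $x\notin N$ the probability $R^x$ satisfies all the countable list of conditions characterizing martingale solutions for $(A,\delta_x)$, so $R^x=P^x$ by uniqueness. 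Integrating against $\mu$ gives $P=P^\mu$, completing the proof and hence establishing well-posedness of the martingale problem for $A$.
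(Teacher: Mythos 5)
Your proof is correct and follows essentially the same route as the paper: reduce the martingale property to a countable family of test functionals via Hypothesis \ref{bpt}, deduce Borel measurability of $x\mapsto P^x$ by descriptive set theory, and prove uniqueness of $P^{\mu}$ by disintegrating an arbitrary solution with respect to $\sigma(X_0)$ and invoking uniqueness for each $\delta_x$. The only (inessential) difference is in the measurability step, where you apply the Borel graph theorem to the set of pairs $(x,Q)$, while the paper applies Kuratowski's theorem on the measurable invertibility of the injective Borel map $P\mapsto P\circ X_0^{-1}$ restricted to the Borel set of solutions.
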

\begin{proof} We combine ideas from the proofs of  Theorem 21.10
 in \cite{Ka} and that  of  Theorem 4.4.6 in \cite{EK}.
 In the sequel   $\Omega =
C_E[0, \infty)$ and we denote with $\cal F$ its Borel $\sigma$-algebra. Recall that   ${\cal P} (E)$ and ${\cal P}(\Omega)$ are  complete and separable metric spaces
  with the  Prokhorov
 metric.

\hh \textit{I Step.}  We consider  the countable  family $\Gamma$ of
random variables $\eta$  defined in \eqref{etaa} by means of the canonical process $(X_t)$.  Recall that by  a
monotone class argument,  $P \in {\cal
P}(\Omega)$ is a martingale solution for $(A, \delta_x)$ if and
only if $P(X_0 \in A)= $ $P(X_0^{-1} (A))= \delta_x(A)$, $A \in
{\cal B}(E)$, and
\begin{equation}\label{chec} \int_{\Omega} \eta(\omega) P(d \omega)=0,
\;\;\; \eta \in \Gamma.
\end{equation}
\textit{II Step.} We prove that the set $(P^x)_{x \in E}$ of all
martingale solutions (each $P^x$ is the unique martingale solution for $(A,
\delta_x)$) belongs to ${\cal B}({\cal P}(\Omega))$.

To this purpose we consider the following measurable   mapping
$$
G : {\cal P}(\Omega) \to {\cal P}(E),\;\;\; G(P)= P\circ X_0^{-1},
\;\;\; P \in {\cal P}(\Omega),
$$
where $P\circ X_0^{-1}(A) = P (X_0 \in A)$, $A \in {\cal B}(E)$. By
\eqref{chec} we deduce that
$$
\begin{array}{c}
 (P^x)_{x \in E} = \Lambda_1 \cap \Lambda_2,\;\;\; \text{where}
\\
\Lambda_1 = \bigcap_{\eta \in \Gamma}\big \{ P \in {\cal P}
(\Omega) \; :\; \int_{\Omega} \eta (\omega)P(d \omega) =0 \big\},\;\;\; \Lambda_2 =
 G^{-1}(\{
\delta_x\}_{x \in E }).
\end{array}
$$
Note that
 for any    $\eta \in B_b(\Omega)$,
 the mapping: $P \mapsto
    \int_{\Omega} \eta (\omega)P(d \omega)$ is Borel on
    ${\cal P}(\Omega)$
  (this is easy to verify
  if  in addition  $\eta \in C_b(\Omega)$; the general case
  follows by a monotone class argument). It follows that $\Lambda_1 \in {\cal B}({\cal P}(\Omega))$.

On the other hand,
$D = \{  \delta_x\}_{x \in E } \in {\cal B}({\cal P}(E))$ (this
follows from Lemma 1.39 in \cite{Ka}) and so $\Lambda_2 \in {\cal B}({\cal P}(\Omega))$.
  The claim is proved.

\hh \textit{III Step.} Considering the restriction $G_0$ of $G$ to
$(P^x)_{x \in E}$ we find  that the measurable mapping $G_0: $
 $(P^x)_{x \in E} \to \{ \delta_x\}_{x \in E} $
is  one to one and onto.
By a   result of Kuratowski (see Theorem A.1.3 in
\cite{Ka}) the inverse function $G_0^{-1}: \{ \delta_x\}_{x \in E } \to (P^x)_{x \in E}$
is also measurable.  Finally to show that $x \mapsto P^x(A) =
\int_{\Omega}1_{A}(\omega)P^x(d\omega)$ is Borel on $E $, for any $A
\in {\cal B}(E)$,  we observe that the mapping $x \mapsto
\delta_x$ from $E$ into $\{ \delta_x\}_{x \in E }$ is a measurable
isomorphism.

 \hh \textit{IV Step.} We fix $\mu \in {\cal P}(E)$ and show that
there exists a unique martingale solution $P^{\mu}$ given by
\eqref{es}.

We have only to prove uniqueness since it is clear that $P^{\mu}$
 in \eqref{es} is a
 martingale solution for $(A, \mu)$. Let $\bar P$
 be a martingale solution for $(A, \mu)$. We prove that it coincides
  with $P^{\mu}$.
 Similarly to the first step in the proof of Theorem \ref{counta}, we
consider the regular conditional probability $Q^{\omega}$ for $\bar P$
 given $\sigma(X_0)$ (the $\sigma$-algebra generated by $X_0$).
  We see that  there exists a $\bar P$-null set $N
\in \sigma(X_0)$ such that
 for any $\omega \not \in N$, the probability measure $Q^{\omega}$
 solves the martingale problem for $(A, \delta_{\omega(0)})$
  $= (A, \delta_{X_0(\omega) })$.

By the uniqueness assumption we deduce that $Q^{\omega} =
 P^{X_0(\omega)}$, $\omega \not \in N$. Setting $\bar E = E^{\bar P}$ and using also the measurability
 property,  we finish with
 $$
\begin{array}{c}
\bar P (A)  = \bar E [ \bar E [1_A \setminus \sigma(X_0)] ] =
 \bar E [Q^{\omega}(A)] = \bar E [P^{X_0(\omega)}(A)]
 \\
= \int_{E} P^x(A) \mu (dx)= P^{\mu}(A),\;\;\; A \in {\cal B}(E).
\end{array}
$$
\end{proof}


\begin{remark} \label{strongm}{\em  Under the  assumptions of Theorem \ref{one} one can introduce the  semigroup
  $(P_t)$, $P_t : B_b(E) \to B_b(E)$,
  $P_t f(x) = \int_{C_E[0, \infty)} f(\omega(t))P^x (d\omega)$, for $f \in B_b(E)$, $t \ge 0$, $x \in E$.
   Combining  Theorem \ref{one} and Theorem 4.4.2  in \cite{EK} one proves the strong Markov property for a martingale solution $X$  for $(A, \mu)$. This means that, for any a.s. finite ${\cal F}_t^X$- stopping time $\tau$ one has:
   $ E [f(X_{t+ \tau}) \setminus {{\cal F}_{\tau}} ]$ $ =
    P_t f (X_{\tau}),$ $t \ge 0, $ $ f \in B_b(E).$  }
\end{remark}
  By the  previous theorems we get the following useful result.
\begin{corollary} \label{ria} Suppose that the operator $A$ satisfies Hypothesis \ref{bpt} and assume the following two conditions:

(i)  for any $x \in E$, there exists a (probability) martingale
solution
 $P^x$ for $(A, \delta_x)$;

(ii)  for any $x \in E$, any two (probability) martingale solutions $P^x_1$ and $P^x_2$ for $(A, \delta_x)$ have the same one dimensional marginal distributions (see \eqref{marg}).

Then the
   martingale
   problem for  $A$ is well-posed. In addition, $(P^x)$ depends measurably on $x$
and so   formula \eqref{es} holds for any $\mu \in {\cal P}(E)$.
\end{corollary}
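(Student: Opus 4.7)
The plan is to derive this corollary essentially as a direct combination of Theorem \ref{counta} and Theorem \ref{one}, both of which are already at our disposal. No new technical machinery should be needed; the work is in chaining the hypotheses correctly.

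First I would fix $x \in E$ and use hypothesis (ii) together with Theorem \ref{counta} to upgrade the agreement on one-dimensional marginals to full agreement of finite-dimensional distributions. Since Hypothesis \ref{bpt} on $A$ is assumed, Theorem \ref{counta} applies verbatim and yields that there is \emph{at most} one probability martingale solution for $(A,\delta_x)$. Combined with the existence assumption (i), we conclude that for every $x \in E$ the martingale problem for $(A,\delta_x)$ is well-posed (in the sense of Definition \ref{baa}).

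Next I would invoke Theorem \ref{one}, whose hypotheses are exactly what we have just established: $A$ is c.p.d.\ and for every $x \in E$ there exists a unique probability martingale solution $P^x$ for $(A,\delta_x)$. The theorem then delivers two things simultaneously: the family $(P^x)_{x \in E}$ depends measurably on $x$ in the sense of \eqref{mes1}, and for every $\mu \in \mathcal{P}(E)$ there exists a unique probability martingale solution $P^\mu$ for $(A,\mu)$ given by the integration formula \eqref{es}. Because this holds for arbitrary $\mu$, the martingale problem for $A$ is well-posed by Definition \ref{baa}.

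There is no real obstacle in this proof: the content of the corollary is purely bookkeeping, organizing the previous two theorems into a single clean hypothesis-conclusion statement convenient for applications (this is exactly how it is used in the proof of Theorem \ref{cons} and in Theorem \ref{uni1}). The only thing to be careful about is that Theorem \ref{counta} requires Hypothesis \ref{bpt} and produces only uniqueness, so existence in (i) must be invoked separately before feeding both into Theorem \ref{one}; and that the measurability conclusion comes \emph{for free} from Theorem \ref{one} rather than needing to be verified by a constructive argument, which is the point worth emphasizing in the statement.
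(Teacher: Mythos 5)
Your proposal is correct and matches the paper's intent exactly: the paper derives Corollary \ref{ria} with the single remark ``By the previous theorems we get the following useful result,'' i.e.\ precisely the chain (ii) $+$ Theorem \ref{counta} $\Rightarrow$ uniqueness for each $(A,\delta_x)$, then (i) $+$ Theorem \ref{one} $\Rightarrow$ measurability of $(P^x)$, formula \eqref{es}, and well-posedness for every $\mu$. Nothing is missing.
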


\subsection {The localization principle}

\hh Let us first introduce the stopped martingale problem
following   Section 4.6 in \cite{EK}.

Let  $A$ be a linear operator,
 $A: D(A) \subset C_b(E) \to B_b(E)$. Consider $\mu  \in {\cal P}(E)$ and
an open set  $U \subset E$.

 An $E$-valued stochastic process $Z = (Z_t)_{t \ge 0}$
defined on some  probability space $(\Omega, {\cal F}, P)$
with continuous trajectories
is a {\it solution of the stopped martingale problem for $(A, \mu, U)$} if,
  the law of $Z_0$ is $\mu$ and the
 following conditions hold:

\hh (i) $Z_t = Z_{t \wedge \tau}$, $P$-a.s, where
\begin{align} \label{ta1}
\tau = \tau^Z_U= \inf \{ t \ge 0 \; : \; Z_t \not \in U  \}
\end{align}
($\tau = + \infty$ if the set is empty; it turns out that  this exit time $\tau$ is an
${\cal F}_t^Z$-stopping time);

\hh (ii) for any $f \in D(A)$,
\begin{equation}\label{mart1}
    M_{t\wedge \tau}(f) = f(Z_t) - \int_0^{t \wedge \tau}
    A f(Z_s) ds, \;\;\; t \ge 0,
\end{equation}
is a martingale with respect to the natural filtration $({\cal F}_t^Z)$.

\smallskip

{\sl  The  next key result
 shows
 that if the (global) martingale problem for $A$ is well-posed then also the
 stopped martingale problem for $(A, \mu, U)$ is well-posed for any choice
  of $(U, \mu)$. }

 A   related statement is given in Theorem 4.6.1 of \cite{EK}
which is based on Lemma 4.5.16.
 However such theorem
 requires   uniqueness for the (global) martingale problem   in the class of  all  c\`adl\`ag martingale solutions;
  actually, it is not clear  how to modify  the proof of Lemma 4.5.16
  in order to have the same statement of the lemma but in the case of   continuous martingale solutions.

 \begin{theorem} \label{key} Assume that $A$ verifies Hypothesis \ref{bpt} and that
 the  martingale problem for $A$ is well-posed.

  Then also the
 stopped martingale problem for $(A, \mu, U)$ is well-posed for any  $\mu \in {\cal P}(E)$ and for any open set $U$ of $E$.
 \end{theorem}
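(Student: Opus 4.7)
The plan is to derive well-posedness of the stopped martingale problem from that of the global one via a concatenation construction, leaning on two consequences of the hypotheses: the family $\{P^x\}_{x\in E}$ of unique global solutions depends measurably on $x$ (Theorem \ref{one}), and these solutions enjoy the strong Markov property (Remark \ref{strongm}).

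Existence is essentially free. Take the unique global martingale solution $P^\mu = \int_E P^x\,\mu(dx)$ for $(A,\mu)$ furnished by Theorem \ref{one}, realize its canonical process $X$ on $C_E[0,\infty)$, and set $Z_t := X_{t \wedge \tau_U^X}$. Continuity, the identity $Z_t = Z_{t\wedge\tau}$, and the martingale property \eqref{mart1} all follow from the optional stopping theorem applied to the global martingales $M_t(f)$, $f \in D(A)$.

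For uniqueness, let $Z^i$ ($i=1,2$) be two solutions of the stopped MP for $(A,\mu,U)$ on $(\Omega_i,{\cal F}_i,P_i)$, with respective exit times $\tau_i$. The idea is to extend each $Z^i$ past $\tau_i$ into a solution $\tilde Z^i$ of the \emph{global} MP for $(A,\mu)$ by pasting a fresh path drawn from $P^{Z^i_{\tau_i}}$. Precisely, on $\Omega'_i := \Omega_i \times C_E[0,\infty)$ equipped with the kernel
$$ P'_i(d\omega,d\omega') := P_i(d\omega)\, P^{Z^i_{\tau_i}(\omega)}(d\omega') $$
(well defined since $y \mapsto P^y$ is measurable by Theorem \ref{one}; on $\{\tau_i = +\infty\}$ one fixes an arbitrary dummy law for $\omega'$ and keeps $\tilde Z^i := Z^i$), set
$$ \tilde Z^i_t(\omega,\omega') := \begin{cases} Z^i_t(\omega), & t \le \tau_i(\omega), \\ \omega'(t-\tau_i(\omega)), & t > \tau_i(\omega), \end{cases} $$
with continuity of paths guaranteed because $P^y$ is supported on paths $\omega'$ satisfying $\omega'(0)=y$. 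To check that $\tilde Z^i$ solves the global MP, fix $f\in D(A)$ and put $N_t := f(\tilde Z^i_t) - \int_0^t Af(\tilde Z^i_s)\,ds$; the stopped MP for $Z^i$ makes $N_{t\wedge\tau_i}$ a martingale, while the conditional distribution of the continuation of $\tilde Z^i$ given ${\cal F}^{\tilde Z^i}_{\tau_i}$ is, by the kernel construction, $P^{Z^i_{\tau_i}}$ on $\{\tau_i < \infty\}$, and $P^y$ is itself a global MP solution. This yields $E\bigl[N_t - N_{t\wedge\tau_i} \,\big|\, {\cal F}^{\tilde Z^i}_{t\wedge\tau_i}\bigr] = 0$, and the tower property completes the martingale property of $N$.

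Well-posedness of the global MP then forces $\tilde Z^1$ and $\tilde Z^2$ to share the same law on $C_E[0,\infty)$. Because $U$ is open and paths are continuous, each $\tilde Z^i$ first exits $U$ exactly at $\tau_i$ (the exit point $Z^i_{\tau_i}$ lies in $\partial U \subset U^c$), so stopping $\tilde Z^i$ at its first exit from $U$ recovers $Z^i$ in law; hence $Z^1$ and $Z^2$ have the same law. The main technical obstacle is the verification that $\tilde Z^i$ solves the global MP across $\tau_i$: one must show carefully that the conditional law of the post-$\tau_i$ segment given the enlarged natural filtration ${\cal F}^{\tilde Z^i}_{\tau_i}$ is indeed $P^{Z^i_{\tau_i}}$, which rests on the product structure of $P'_i$ together with the strong Markov property from Remark \ref{strongm}. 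This is the step that plays the role of---but cannot be directly imported from---Lemma 4.5.16 of \cite{EK} in the c\`adl\`ag setting.
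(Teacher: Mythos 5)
Your proposal is correct and follows essentially the same route as the paper: existence via optional stopping of the global solution, and uniqueness by concatenating the stopped solution with a fresh path drawn from the kernel $P^{Z_{\tau}}$ on a product space (which is exactly the content of the paper's Lemma \ref{stop}), then invoking global uniqueness. The only cosmetic differences are that the paper truncates the exit time at a finite horizon $T$ (so that $\tau<\infty$ always, comparing the laws of $(Z^i_{t\wedge T})$ for each $T$) where you instead use a dummy kernel on $\{\tau=\infty\}$, and that the verification of the martingale property across $\tau$ needs only the fact that each $P^x$ solves the global problem together with the product structure of the pasted measure, not the strong Markov property of Remark \ref{strongm}.
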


The proof is based on the following technical lemma which provides a kind of extension property for solutions to the stopped martingale problem (a  related result is   Lemma
 4.5.16 in \cite{EK} which is proved
 in the class of   c\`adl\`ag martingale solutions).

 We denote by $\tau_U: C_E [0, \infty)$ $\to [0, \infty]$  the exit time from U.


 \begin{lemma} \label{stop} Let $A$ be  a linear operator as in \eqref{lin}. Suppose that for any $x \in E$ there exists a (probability) martingale solution $P^x$ for $A$  and that $(P^x) $ depends measurably on $x$ (see \eqref{mes1}).
 Let $\mu \in {\cal P}(E)$ and $U$ be an open set of $E$. Let  $Z= (Z_t)$ be a
 martingale solution  for the stopped martingale problem  for $(A, \mu, U)$.

Then, for any $T>0$,
there exists a (probability) martingale  solution $ P_T$ for $(A,\mu)$ such that if $X$ is the canonical process on $ (C_E [0, \infty ), {\cal B}(C_E [0, \infty )), P_T) $ (see  \eqref{can})
 then $(X_{t \wedge \tau_U \wedge T})_{t \ge 0}$ and
$(Z_{t \wedge \tau_U^Z  \wedge T})_{t \ge 0}$
$= (Z_{t  \wedge T})_{t \ge 0}$ have the same law.
\end{lemma}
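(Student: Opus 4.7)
The plan is to construct $P_T$ by concatenating trajectories of $Z$ stopped at $\sigma := \tau_U \wedge T$ with independently sampled global martingale solutions launched at the endpoint $Z_\sigma$. The measurability assumption \eqref{mes1} on $x \mapsto P^x$ is what makes this pasting produce a Borel probability measure on $C_E[0, \infty)$. Concretely, for $\omega_0, \omega_1 \in C_E[0, \infty)$ with $\omega_0(s) = \omega_1(0)$, define the concatenation
\begin{equation*}
(\omega_0 *_s \omega_1)(t) := \omega_0(t) \text{ if } t \le s, \qquad (\omega_0 *_s \omega_1)(t) := \omega_1(t-s) \text{ if } t > s,
\end{equation*}
which gives a continuous path and is jointly Borel in $(\omega_0, \omega_1, s)$. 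Let $Q$ denote the law of $Z$ on $C_E[0, \infty)$. Since $U$ is open, $\tau_U$ is Borel on $C_E[0, \infty)$, so $\sigma: C_E[0, \infty) \to [0, T]$ is a bounded Borel function. I would set
\begin{equation*}
P_T(B) := \int_{C_E[0,\infty)} \! \int_{C_E[0,\infty)} \mathbf{1}_B \bigl(\omega_0 *_{\sigma(\omega_0)} \omega_1\bigr) \, P^{\omega_0(\sigma(\omega_0))}(d\omega_1) \, Q(d\omega_0),
\end{equation*}
for $B \in \mathcal{B}(C_E[0, \infty))$; the inner integrand is Borel in $\omega_0$ by \eqref{mes1} combined with the measurability of $\omega_0 \mapsto \omega_0(\sigma(\omega_0))$.

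Next I would verify that $P_T$ is a martingale solution for $(A, \mu)$. The initial distribution is automatically $\mu$ because $X_0 = \omega_0(0)$ has law $\mu$ under $Q$. For the martingale property of $M_t(f) := f(X_t) - \int_0^t A f(X_r) \, dr$, $f \in D(A)$, I would split into two regimes: (a) on $\{t \le \sigma\}$ the canonical path $X$ coincides with a trajectory of $Z$, so $M_{\cdot \wedge \sigma}(f)$ is an $(\mathcal{F}^X_t)$-martingale by the stopped-problem hypothesis \eqref{mart1}; (b) conditionally on $\mathcal{F}^X_\sigma$, the post-$\sigma$ portion of $X$ has law $P^{X_\sigma}$ by construction, and since each $P^x$ is a global martingale solution, $t \mapsto M_{\sigma+t}(f) - M_\sigma(f)$ is a martingale in $t$ with respect to the shifted filtration. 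Combining (a) and (b) by the optional sampling/tower argument (with $\sigma \le T$ bounded) yields the full martingale property on $[0, \infty)$.

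Finally, to establish the law equality I would check that $P_T$-a.s.\ $\tau_U(X) \wedge T = \sigma(\omega_0)$: if $\tau_U(\omega_0) < T$, openness of $U$ together with path continuity gives $\omega_0(\tau_U(\omega_0)) \notin U$, so $\tau_U(X) = \tau_U(\omega_0) = \sigma$; if $\tau_U(\omega_0) \ge T$, then $\omega_0(t) \in U$ for all $t < T$, forcing $\tau_U(X) \wedge T = T = \sigma$. Hence $X_{t \wedge \tau_U(X) \wedge T} = \omega_0(t \wedge \sigma(\omega_0))$, whose law under $P_T$ matches the law of $Z_{t \wedge \tau_U^Z \wedge T} = Z_{t \wedge T}$ under $P$, as required. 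The principal technical obstacle is step (b) of the martingale verification: one must invoke Fubini and the tower property to see that, under the compound measure $P_T$, the post-$\sigma$ continuation is conditionally independent of $\mathcal{F}^X_\sigma$ given $X_\sigma$ and has the prescribed law $P^{X_\sigma}$; this is exactly where the Borel measurability of $(P^x)$ is used, and where care is required to work with the natural filtration $(\mathcal{F}^X_t)$ rather than a larger one (a subtlety corresponding to Lemma 4.5.16 of \cite{EK}).
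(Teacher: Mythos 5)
Your proposal is correct and follows essentially the same route as the paper: the paper likewise builds $P_T$ as the image of a product measure $P(d\omega)\,P^{Z_{\tau(\omega)}(\omega)}(d\omega')$ under the concatenation map at $\tau_U^Z\wedge T$, verifies the martingale property by splitting the increment into the parts before and after $\tau_U\wedge T$ (its $R_1+R_2$ decomposition is exactly your (a)/(b) split, with the post-$\sigma$ part handled by Fubini against the kernel $P^x$), and identifies the stopped laws just as you do. The only differences are presentational — you carry the law $Q$ of $Z$ on path space instead of working on the original $\Omega$, and you phrase step (b) as conditional independence rather than the explicit computation with test functions $\prod_k h_k(X_{t_k})$.
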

 \begin{proof}
 \textit{I Step. Construction of $P_T$.}

Our construction
 is  inspired by   page 271 of \cite{letta}. Let $Z$ be defined
on some probability space $(\Omega, {\cal F}, P)$ and introduce
\begin{equation} \label{tu6}
\tau = \tau_U^Z \wedge T.
\end{equation}
We consider the  measurable space  $\Omega_* = \Omega \times C_E[0, \infty)$ endowed with the product $\sigma$-algebra
  ${\cal F}_*$ $={\cal F} \otimes {\cal B } (C_E[0, \infty))$. On this product space, using the measurability of $x \mapsto P^x$, we consider  a probability  measure $P_*$ defined by
 the formula
 $$
\int_{\Omega_*} \! \! f(\omega, \omega') P_* (d\omega , d\omega') := \int_{\Omega} P(d\omega) \int_{C_E[0, \infty) } \! \! \! \! f(\omega, \omega') \, P^{Z_{\tau(\omega)}(\omega)}\, (d \omega'),
$$
for any real bounded and measurable function $f$ on  $\Omega \times C_E[0, \infty)$ (according to pages 19-20 in \cite{Ka},  $P^{Z_{\tau(\omega)}(\omega)}\, (d \omega')$ is a kernel from $\Omega$ into $C_E[0, \infty))$. Note that if $f(\omega, \omega')= f(\omega)$
then $E^{P_*}[f]= E^{P}[f]$ (here $E^P$ and $E^{P_*}$ denote  expectations on $(\Omega, {\cal F}, P)$ and
$(\Omega_*, {\cal F}_*, P_*)$ respectively).
 Then define
$$
J = \{ (\omega, \omega') \in \Omega_* \, :\, Z_{\tau(\omega)}(\omega) = \omega'(0) \}.
$$
Since $\omega \mapsto Z_{\tau(\omega)}(\omega)$ is ${\cal F}$-measurable, it is clear that $J \in {\cal F}_*$. Moreover we have $P_* (J)=1$ since $P^x(\omega' \, :\, \omega'(0)=x)=1$, $x \in E$. We restrict the events of $ {\cal F}_*$ to $J$ and consider the probability space $(J, {\cal F}_*, P_*)$.

Using that $\tau< \infty$,  we define a measurable mapping $\phi : J \to C_E[0, \infty)$ as follows
$$
\phi_t(\omega, \omega') = \begin{cases}
 Z_t(\omega),\;\; t\le \tau(\omega)
\\
 \omega' \big(t - \tau(\omega) \big),\;\; t> \tau(\omega)
\end{cases},\;\;\; \omega \in \Omega,\; \omega' \in C_E[0, \infty),\;\; t \ge 0
$$
(or $\phi_t(\omega, \omega')=  Z_t(\omega) 1_{\{ t \le \tau(\omega) \}} +  \omega' (t - \tau(\omega)) 1_{\{ t > \tau(\omega) \}} $, $t \ge 0$). Equivalently, $\phi= (\phi_t)$ is an $E$-valued continuous stochastic process. Note that $\tau_U^Z(\omega)= \tau_U^\phi (\omega, \omega')$, for any $(\omega, \omega') \in \Omega_*.$
 The required measure $P_T$ will be  the image probability distribution of $P_*$ under $\phi$, i.e.,
$$
P_T (B) = P_* (\phi^{-1}(B)),\;\;\; B \in   {\cal B}(C_E [0, \infty )).
$$
By the previous construction the  fact  that   $(X_{t \wedge \tau_U \wedge T})_{t \ge 0}$ and
  $ (Z_{t  \wedge T})_{t \ge 0}$ have the same law can be easily proved. Indeed, for any $ B \in
    {\cal B}(C_E [0, \infty )),$
$$
\begin{array}{c}
P_T( X_{ \cdot \wedge \tau_U \wedge T} \in B) = P_T( \omega' \in C_E[0, \infty) \; :\;    \omega'( \cdot \wedge \tau_U \wedge T) \in B)
\\
 =P_* (   \phi_{\cdot
    \wedge \tau_U^{\phi} \wedge T}  \in B ) =
    E^{P_*} [ 1_B( Z_{\cdot
    \wedge \tau_U^{Z} \wedge T} )] =
    P (  Z_{\cdot
    \wedge \tau_U^{Z} \wedge T}  \in B ).
\end{array}
$$
\textit{II Step. The measure  $P_T$ is a martingale solution for $(A, \mu)$.}

First we have
 $P_T (X_0 \in C)= P(Z_0 \in C)= \mu (C)$, for any $C \in {\cal B}(E)$.

Now we check the martingale property.  For fixed $0 \le t_1 < \ldots   < t_{n+1}$, $f \in D(A)$ and  $h_1, \ldots, h_n
\in C_b (E)$, we have to show that (using the canonical process $X$ defined in \eqref{can})
\begin{align}\label{equi1}
& E^{P_T} \big[ \big( M_{t_{n+1}}(f) - M_{t_{n}}(f) \big) \cdot \prod_{k=1}^n h_k(X_{t_k})
\big] =0,
\\ \nonumber  \text{where}\;\;
 & M_t(f)(\omega') := \omega'(t) - \int_0^t A f(\omega'(s)) ds, \;\; t \ge 0,\;\; \omega' \in C_E[0, \infty).
 \end{align}
Note that $\big( M_{t_{n+1}}(f) - M_{t_{n}}(f) \big) \cdot \prod_{k=1}^n h_k(X_{t_k}) = R_1 + R_2$, where
$R_i: C_E[0, \infty) \to \R$, $i=1,2$,
$$
\begin{array}{c}
R_1 = \big( M_{t_{n+1} \wedge (\tau_U \wedge T)}(f) - M_{t_{n} \wedge (\tau_U \wedge T)}(f) \big) \cdot \prod_{k=1}^n h_k(X_{t_k}),
\\
R_2 = \big( M_{t_{n+1} \vee (\tau_U \wedge T) }(f) - M_{t_{n} \vee (\tau_U \wedge T)}(f) \big) \cdot \prod_{k=1}^n h_k(X_{t_k}).
\end{array}
$$
As for  $R_1$ we  note  that if $t_n \ge \tau_U \wedge T$,  then $R_1=0$;  so with $\tau = \tau_U^Z \wedge T$ as in \eqref{tu6} we find
$$
\begin{array}{c}
E^{P_T}[R_1] = E^{P_*}[R_1 (\phi) \, 1_{\{ t_n < \tau \}}]
\\ = E^{P_*} \Big[   \Big( f(Z_{t_{n+1} \wedge \tau }) - f(Z_{t_{n} \wedge \tau }) -
\int_{t_{n} \wedge \tau }^{t_{n+1} \wedge \tau } Af(Z_r) dr  \Big) \cdot \prod_{k=1}^n
h_k(Z_{t_k \wedge \tau}) \, \cdot  1_{\{ t_n < \tau\}}\Big ].
\end{array}
$$
Since $\prod_{k=1}^n
h_k(Z_{t_k \wedge \tau}) \, \cdot  1_{\{ t_n < \tau\}}$ is bounded and ${\cal F}_{t_n}^Z$-measurable, using the martingale property \eqref{mart1} we find that $E^{P_T}[R_1]=0.$

Let us consider $R_2$ and note that $R_2 =0$ if $\tau_U \wedge T \ge t_{n+1}$. Set $C_E = C_E[0, \infty)$ and define
\begin{align*}
\Lambda (\omega, \omega') = f(\omega' (t_{n+1} \vee \tau(\omega) - \, \tau(\omega))) -  f(\omega' (t_{n} \vee \tau(\omega) - \, \tau(\omega)))\\ -
\int_{t_{n} \vee \tau (\omega)}^{t_{n+1} \vee \tau (\omega)} Af(\omega'(r- \tau(\omega))) dr,\;\;\; \omega \in \Omega, \; \omega' \in C_E.
\end{align*}
Since $(P^x)$ are martingale solutions, we have
\begin{equation} \label{sai}
\int_{C_E} \Lambda(\omega, \omega' ) F(\omega, \omega') P^x (d \omega') =0, \;\; \;\;\; \omega \in \Omega, \, x \in E,
\end{equation}
for any $F: \Omega \times C_E \to \R$, bounded and ${\cal F}_*$-measurable and such that $F (\omega, \cdot )$ is ${\cal F}_{t_n \vee \tau(\omega) - \tau(\omega)}^X-$ measurable, for any $\omega \in \Omega$.
Hence
\begin{align*}
& E^{P_T}[R_2] = E^{P_*}[R_2 (\phi) \, 1_{\{ t_{n+1} > \tau \}}]
= E^{P_*} \Big[    \Lambda
 \cdot \prod_{k=1}^n
h_k(\phi_{t_k }) \, \cdot  1_{\{ t_{n+1} > \tau\}}\Big ]
\\
& = \int_{\Omega} 1_{\{ t_{n+1} > \tau(\omega)\}} \cdot \! \! \! \! \prod_{t_k \le \tau(\omega)}
\! \! h_k(Z_{t_k} (\omega) )  P(d\omega) \! \!  \int_{C_E}  \! \! \Lambda(\omega, \omega')
  F(\omega, \omega') P^{Z_{\tau (\omega)}(\omega)}(d \omega')
\end{align*}
with $F(\omega, \omega') = \prod_{t_k > \tau (\omega) }
h_k(\omega'(t_k -\tau(\omega) ))$ and so by \eqref{sai} we get $E^{P_T}[R_2] =0$.
We have found that \eqref{equi1} holds and this completes the proof.
\end{proof}

\medskip
\begin{proof} [Proof of Theorem \ref{key}] \textit{Existence.}  Consider a martingale solution $X$ for $(A, \mu)$ and set $Z_t = X_{t \wedge \tau_U^X}$, $t \ge 0$. Note that $\tau_U^X = \tau_U^Z$. By the optional sampling theorem  we deduce that $Z= (Z_t)$ is a   solution of the
 stopped martingale problem for $(A, \mu, U)$.

\noindent \textit{Uniqueness.}   Since $A$
 satisfies Hypothesis \ref{bpt} we know by Theorem \ref{one} that the  martingale solutions $P^x$    depend measurably on $x$.

Let $Z^1$ and $Z^2$ be two solutions for the stopped martingale problem for $(A, \mu, U)$. To show that they have the same law it is enough to prove that, for any $T>0$, the processes $(Z_{t \wedge T}^1)$ and $(Z_{t \wedge T}^2)$  have the same law.

Fix $T>0$. By Lemma \ref{stop} there exist  martingale solutions $P^1$ and $P^2$ for $(A,\mu)$ such that if $X$ is the canonical process on $ (C_E [0, \infty ), {\cal B}( C_E [0, \infty )), P^k) $,  then  $(X_{t \wedge \tau_U \wedge T})_{t \ge 0}$ and
$(Z_{t   \wedge T}^k)_{t \ge 0}$, $k=1,2$, have the same law.
Since by hypotheses $P^1 = P^2$ we obtain easily the assertion.
\end{proof}

From Theorem \ref{key} we get

\begin{corollary} \label{loc}
Let $A_1$ and  $A_2 $ be linear operators with common domain $D(A_1) = D(A_2) = D \subset C_b(E) $ with values in $B_b(E)$. Suppose that  Hypothesis \ref{bpt} is satisfied. Let $U$ be an open subset of $E$ such that
\begin{equation}\label{loc1}
    A_1 f(x) = A_2 f(x),\;\;\; x \in U,\;\; f \in D.
\end{equation}
 If the martingale problem for $A_1$ is well-posed then the stopped martingale problem for $(A_2, \mu, U)$ is well-posed  for any $\mu \in {\cal P}(E)$.
\end{corollary}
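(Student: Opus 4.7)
The plan is to reduce the stopped martingale problem for $(A_2,\mu,U)$ to the one for $(A_1,\mu,U)$ and then appeal directly to Theorem \ref{key}. The crucial observation is that the two stopped problems actually have the \emph{same} set of solutions, because the defining martingales $M_{t\wedge\tau}(f) = f(Z_t)-\int_0^{t\wedge\tau}A_i f(Z_s)\,ds$ only involve the values of $A_i f$ along the path $Z$ up to the exit time from $U$.

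First I would fix any continuous $E$-valued process $Z=(Z_t)$ with initial law $\mu$, set $\tau=\tau_U^Z$, and compare the two integrals
\begin{equation*}
\int_0^{t\wedge\tau} A_1 f(Z_s)\,ds \quad\text{and}\quad \int_0^{t\wedge\tau} A_2 f(Z_s)\,ds, \qquad f\in D,\ t\ge 0.
\end{equation*}
Since $U$ is open and $Z$ has continuous paths, $Z_s\in U$ for every $s\in[0,\tau)$, and on $U$ we have $A_1 f=A_2 f$ by \eqref{loc1}. The only time at which $Z_s$ may fail to lie in $U$ (when $\tau<\infty$) is $s=\tau$ itself, where $Z_\tau\in\partial U$; but a single point has Lebesgue measure zero, so the two integrals coincide pointwise. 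Consequently the corresponding stopped martingales agree identically, condition (i) of the stopped martingale problem (namely $Z_t=Z_{t\wedge\tau}$) does not depend on the operator, and we conclude that $Z$ solves the stopped martingale problem for $(A_2,\mu,U)$ if and only if it solves the one for $(A_1,\mu,U)$.

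Next, since the martingale problem for $A_1$ is well-posed and $A_1$ satisfies Hypothesis \ref{bpt}, Theorem \ref{key} applies and yields well-posedness of the stopped martingale problem for $(A_1,\mu,U)$ for every $\mu\in\mathcal{P}(E)$ and every open $U\subset E$. Combining this with the equivalence established in the previous paragraph gives both existence and uniqueness for the stopped martingale problem for $(A_2,\mu,U)$, which is the conclusion of the corollary.

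There is essentially no serious obstacle; the only subtlety is the boundary behaviour at the exit time. If $U$ were replaced by a closed set, one would need to worry about $Z_\tau\in\partial U$ contributing to the integrand where $A_1$ and $A_2$ disagree, but because $U$ is open and the integration is with respect to Lebesgue measure on $[0,t\wedge\tau]$, the single instant $\{\tau\}$ is negligible and the argument goes through cleanly.
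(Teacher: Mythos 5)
Your proposal is correct and follows essentially the same route as the paper: both proofs rest on the observation that, since $Z_s\in U$ for $s<\tau_U^Z$ (with the degenerate case $\tau_U^Z=0$ when $Z_0\notin U$ handled trivially), the stopped martingales for $A_1$ and $A_2$ coincide, so the two stopped problems have the same solutions, and well-posedness is then imported from Theorem \ref{key} applied to $A_1$. The only cosmetic difference is that you state the equivalence of the two solution sets up front and transfer existence and uniqueness simultaneously, whereas the paper treats existence (by stopping a global $A_1$-solution) and uniqueness separately.
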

\begin{proof} \textit{Existence.} If $X$ is a solution of the martingale problem for $(A_1, \mu)$ defined on $(\Omega, {\cal F}, P)$ then
$Z= (X_{t \wedge \tau})$ is a solution for the stopped martingale problem for
$(A_1, \mu, U)$, with $\tau = \tau_U^X$. Since, for any $f \in D$, $t \ge 0,$
$$
 f(X_{t \wedge \tau}) - \int_0^{t \wedge \tau}  A_1f(X_{s}) ds
  =  f(X_{t \wedge \tau}) - \int_0^{t \wedge \tau}  A_2 f(X_{s}) ds
$$
we see that $Z$ is also a solution for the stopped martingale problem for
$(A_2, \mu, U)$ (note that $X_0(\omega) \not \in U$ implies $\tau(\omega)=0$ and  $X_0(\omega)  \in U$ implies $\tau(\omega)>0$, $\omega \in \Omega$).

\hh \textit{Uniqueness.} Assume now that $Z$ and $W$ are both solutions for the stopped martingale problem for
$(A_2, \mu, U)$. It follows that they are also solutions for the stopped martingale problem for $(A_1, \mu, U)$. By Theorem \ref{key} we deduce
that $Z$ and $W$ have the same law.
\end{proof}


\smallskip
The following result is a kind of  converse of Theorem \ref{key} and  gives conditions under which
  uniqueness for  stopped martingale problems implies uniqueness for the global martingale problem. It is a modification of Theorem 4.6.2  in \cite{EK}.


\begin{theorem} \label{uni}  Assume that $A$ verifies Hypothesis \ref{bpt} and
    that for any $x \in
  E$ there exists a martingale solution  for  $(A, \delta_x)$.

Suppose that
 there exists
a sequence of open sets $U_k \subset E$ with $\cup_{k \ge 1} U_k = E$  such that for any  $\mu \in {\cal P}(E)$, for any  $k \ge 1 $, we have uniqueness  for   the
 stopped martingale problem for $(A, \mu, U_k)$.

Then the martingale problem for $A$ is well-posed.
\end{theorem}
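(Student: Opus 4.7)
The plan is to reduce the global uniqueness question to the stopped martingale problems on the sets $U_k$. Since existence is assumed, Corollary \ref{ria} shows that it suffices to prove that for every $x \in E$, any two probability martingale solutions $P^1, P^2$ for $(A, \delta_x)$ have the same one-dimensional marginal distributions. Throughout I will write $X$ for the canonical process on $C_E[0,\infty)$.

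As a first step I would reduce to the case of a \emph{nested} cover $U_1 \subset U_2 \subset \cdots$. The key auxiliary claim is: if the stopped martingale problems for $(A, \mu, U)$ and for $(A, \mu, V)$ are well-posed for every $\mu \in {\mathcal P}(E)$, then so is the stopped martingale problem for $(A, \mu, U \cup V)$. Given this claim, replacing each $U_k$ by $V_k := U_1 \cup \cdots \cup U_k$ yields a nested cover while preserving the hypothesis. I would prove the claim by an iterative patching argument: for two stopped solutions on $W = U \cup V$, the piece up to the first exit of $U$ is determined by well-posedness on $U$; then Lemma \ref{stop} permits extending each stopped solution to a genuine global solution, so that conditioning on the exit value and invoking uniqueness on $V$ controls the process on the next excursion in $V$; one iterates across successive excursions between $U$ and $V$. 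Controlling this iteration over the possibly countably many alternations is the main technical obstacle.

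With the nesting in force, set $\tau_k := \inf\{t \ge 0 : X_t \notin U_k\}$. For every $k$ with $x \in U_k$, the optional sampling theorem shows that $(X_{t \wedge \tau_k})_{t \ge 0}$ is a solution of the stopped martingale problem for $(A, \delta_x, U_k)$ under both $P^1$ and $P^2$, so by the assumed uniqueness their laws on ${\cal B}(C_E[0,\infty))$ coincide. Since $\{\tau_k > t\} = \{X_{s \wedge \tau_k} \in U_k \text{ for every } s \le t\}$ lies in $\sigma(X_{s \wedge \tau_k} : s \le t)$ and $X_t = X_{t \wedge \tau_k}$ on $\{\tau_k > t\}$, this gives, for every $t \ge 0$ and $B \in {\cal B}(E)$,
\begin{equation*}
P^1(X_t \in B,\, \tau_k > t) = P^2(X_t \in B,\, \tau_k > t).
\end{equation*}

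Finally I would let $k \to \infty$. For each $\omega \in C_E[0,\infty)$ and each $t > 0$ the image $\omega([0,t])$ is a compact subset of $E$, and as the nested family $\{U_k\}$ is an open cover of $E$ it contains $\omega([0,t])$ for all sufficiently large $k$; hence $\tau_k(\omega) \uparrow \infty$ pointwise, so $P^i(\tau_k \le t) \to 0$. Writing
\begin{equation*}
P^i(X_t \in B) = P^i(X_t \in B,\, \tau_k > t) + P^i(X_t \in B,\, \tau_k \le t)
\end{equation*}
and letting $k \to \infty$ on both sides, the displayed identity yields $P^1(X_t \in B) = P^2(X_t \in B)$. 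This gives agreement of the one-dimensional marginals, and well-posedness of the martingale problem for $A$ then follows from Corollary \ref{ria}.
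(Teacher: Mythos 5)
Your second half is clean and correct: once the cover is nested, the stopped process $(X_{t\wedge\tau_k})$ is a solution of the stopped problem for $(A,\delta_x,U_k)$ under each $P^i$, the event $\{\tau_k>t\}$ is measurable with respect to the stopped path, $\tau_k\uparrow\infty$ pointwise by compactness of $\omega([0,t])$, and the marginals agree in the limit. This is a genuinely simpler route than the paper's, which never assumes nestedness: the paper enumerates the cover with each $U_k$ repeated infinitely often, introduces the successive exit times $\tau_i=\inf\{t\ge\tau_{i-1}:\omega(t)\notin V_i\}$, shows $\tau_i\to\infty$, and identifies the Laplace transform $E^P[\int_0^\infty e^{-\lambda t}f(X_t)\,dt]$ recursively through the unique laws $Q_i$ of the stopped problems for $(A,\mu_i,V_i)$, where the $\mu_i$ are (normalized, exponentially weighted) exit distributions determined by $Q_1,\dots,Q_{i-1}$.

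The problem is that your entire reduction to the nested case rests on the unproved claim that well-posedness of the stopped problems on $U$ and on $V$ (for all initial laws) implies well-posedness of the stopped problem on $U\cup V$. That claim is essentially the theorem itself in the two-set case: after the process exits $U$ it may re-enter $U$ before leaving $V$, so you must control countably many alternations, condition at each exit time on a general (random) initial distribution --- which is precisely why the hypothesis is stated for all $\mu\in{\cal P}(E)$ --- and show the alternating exit times increase to the exit time from $U\cup V$. This is exactly the excursion bookkeeping the paper's proof carries out with regular conditional probabilities and the Laplace-transform decomposition; you have acknowledged it as ``the main technical obstacle'' without resolving it, so the gap is real. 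A secondary issue: your patching sketch invokes Lemma \ref{stop}, whose hypotheses include that $(P^x)$ depends measurably on $x$. In the paper that measurability is extracted from Theorem \ref{one}, i.e.\ from uniqueness of the \emph{global} problem, which is what you are trying to prove; mere existence of a solution for each $\delta_x$ does not supply it. Either drop the nested reduction and run the excursion decomposition directly on the given (non-nested) cover, as the paper does, or supply a full proof of the union claim --- in which case you will find yourself reproducing that same decomposition.
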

\begin{proof} By Corollary \ref{ria} we  have to prove that for a fixed $x \in E$ any  two martingale solutions
 $P^1$ and $P^2$ for $(A, \delta_x)$ have the same one dimensional marginal distribution.
Thus using  the canonical process $(X_t)$ given in \eqref{can} and
 a uniqueness result for   the Laplace transform, it is enough to  show
 that, for any
$\lambda >0$, $f \in C_b(E),$
\begin{equation}\label{lap17}
    E^{1} \Big[ \int_0^{+\infty} e^{- \lambda t} f(X_t) dt\Big] = E^{2} \Big[ \int_0^{+\infty} e^{- \lambda t} f(X_t) dt\Big],
\end{equation}
with $E^j = E^{P^j}$, $j=1,2$. We first introduce ${\cal S}= \{ U_k^{(j)} \}_{k \ge 1, \, j \ge 1}$, where $U_k^{(j)} = U_k$, $k,\, j \ge 1$. Then we  enumerate ${\cal S}$ using positive integers and find
 ${\cal S} = (V_i)_{i \ge 1}$ (so each $U_k$ appears infinitely many times in $(V_i)_{i \ge 1}$).

{\sl To prove \eqref{lap17} we show that for any $\lambda >0$ there exist $\mu_i \in {\cal P}(E)$, $i \ge 1$, such that, for any (probability)  martingale solution
 $P$  for $(A, \delta_x)$,  we have that
 $$
g(\lambda, f)  :=E^P \Big[ \int_0^{+\infty} e^{- \lambda t} f(X_t) dt\Big]
 $$
 can be computed, for any  $f \in C_b(E)$, using the (unique) laws of  solutions of the stopped
 martingale problems for $(A, \mu_i, V_i)$, $i \ge 1$.
}

The previous claim can be   proved adapting  the proof of
 Theorem 4.6.2  in \cite{EK};   we  give a sketch of proof for the sake of completeness.

Define, for any $\omega \in C_E [0, \infty)= C_E$,  $\tau_0(\omega) =0$ and, for $i \ge 1$,
$$
 \tau_i(\omega) =  \inf \{ t \ge \tau_{i-1} (\omega)\; :\;
 \omega (t) \not \in V_i \}
$$
 (where inf $\emptyset = \infty$). By Proposition 2.1.5 in \cite{EK} each $\tau_i$ is an ${\cal F}_t^X$-stopping time.   Moreover, for any $\omega \in C_E $, $\tau_i (\omega) \to +\infty$, as $i \to \infty$.

 Indeed
 let $\tau = \sup_{i} \tau_i$ and suppose that for some $\omega \in C_E $ we have $\tau(\omega) < +\infty$. Then there exists $U_{k(\omega)} $ such that $\omega (\tau(\omega)) \in U_{k(\omega)}$. It follows that for $s \in [0, \tau(\omega)[$ close enough to $\tau(\omega)$ we have  $\omega (s) \in U_{k(\omega)}$.  Then we can find an integer $i= i(\omega)$ large enough such that $\omega (\tau_i(\omega)) \in U_{k(\omega)}$ and also
  $V_{i(\omega)} = U_{k(\omega)}$; this is a contradiction since by construction $\omega (\tau_i(\omega)) \not \in V_{i(\omega)}$.

Let $P$ be  any   martingale solution
   for $(A, \delta_x)$ on $(C_E, {\cal B}(C_E))$ and fix $\lambda>0.$
 We find, setting $E = E^P$,
\begin{align} \label{f5}
\begin{array}{c}
g(\lambda, f) = \sum_{i \ge 1}  E \Big[ 1_{\{ \tau_{i-1}< \infty\}}\,  \int_{\tau_{i-1}}^{\tau_i} e^{- \lambda t} f(X_t) dt\Big]\\
 \sum_{i \ge 1}  E \Big[e^{- \lambda \, \tau_{i-1}} \; 1_{\{ \tau_{i-1}< \infty\}} \int_{0}^{\eta_i} e^{- \lambda t} f(X_{t\wedge \eta_i \, + \, \tau_{i-1}}) dt\Big],
\end{array}
\end{align}
where on $\{ \tau_{i-1} <\infty \}$, we define $\eta_i := \tau_{i} - \tau_{i-1} $   so that  $\eta_i = \inf \{ t \ge 0 \; :\;
  X_{t + \, \tau_{i-1}} \not \in V_i \}$.
 For any $i \ge 1$ such that $P (\tau_{i-1} < \infty)>0$ define $\mu_i \in {\cal P}(E),$
\[
\mu_i (B)= \frac{E \big[e^{- \lambda \, \tau_{i-1}} \; 1_{\{ \tau_{i-1}< \infty\}} \, 1_B (X_{\tau_{i-1}}) \big]}
{E \big[e^{- \lambda \, \tau_{i-1}} \; 1_{\{ \tau_{i-1}< \infty\}}  \big]},\;\; B \in {\cal B}(E),
\]
and the stochastic process $Y^i = (Y^i_t)$, $Y^i_t := X_{t\wedge \eta_i \, + \, \tau_{i-1}}$, $t \ge 0$, defined on  $(C_E, {\cal B}(C_E), P_i)$ where $
P_i (C)$ $ = \frac{E \big[ e^{- \lambda \, \tau_{i-1}} \; 1_{\{ \tau_{i-1}< \infty\}} \, 1_C]} { E \big[e^{- \lambda \, \tau_{i-1}} \; 1_{\{ \tau_{i-1}< \infty\}} ]}$, $C \in {\cal B}(C_E)$.
It follows that $\mu_1 = \delta_x$.
We need to show that $Y^i$ is a solution of the stopped martingale problem  for $(A, \mu_i, V_i)$.
Note that
\begin{equation} \label{unic}
\tau_{V_i}^{Y^i} = \eta_i,\;\;\; i \ge 1.
\end{equation}
It is also clear that the law of $Y_0^i$ is $\mu_i$ and also that $Y_t = Y_{t \wedge \eta_i}$, $t \ge 0$.
It remains to check the martingale property \eqref{mart1}.
 To this purpose it is enough to  prove that $\tilde X = (X_{t \, + \, \tau_{i-1}})_{t \ge 0}$ defined on $(C_E, {\cal B}(C_E), P_i)$ is a (global) martingale solution for $(A, \mu_i)$.

We fix $t_2 > t_1 \ge 0$ and consider $G \in {\cal F}_{\tau_{i-1} + t_1} = {\cal F}_{t_1}^{\tilde X} $. For any $T>0$ we have
 with $\alpha_{i-1} = E \big[e^{- \lambda \, \tau_{i-1}} \; 1_{\{ \tau_{i-1}< \infty\}} ]$,
\begin{align*}
E^{P_i} \Big [ \Big ( f(\tilde X_{t_2 \wedge T}) -   f(\tilde X_{t_1 \wedge T }) - \int_{t_1 \wedge T}^{t_2\wedge T } A f(\tilde X_s)ds \Big) \, 1_{G} \Big]
\\
=  \frac{1}{\alpha_{i-1}} E \Big [e^{- \lambda \, \tau_{i-1}} \; 1_{\{ \tau_{i-1}< \infty\}}  \Big ( f( X_{ (t_2 + \tau_{i-1}) \wedge T}) -   f( X_{(t_1 + \tau_{i-1})  \wedge T }) \\ - \int_{(t_1 + \tau_{i-1})  \wedge T}^{(t_2 + \tau_{i-1}) \wedge T } A f(X_s)ds \Big) \, 1_{G} \Big]\\
= \frac{1}{\alpha_{i-1}} E [ \big ( M_{(t_2 + \tau_{i-1}) \wedge T }(f) - M_{(t_1 + \tau_{i-1}) \wedge T }(f) \big) \, Z_1 ] =0,
\end{align*}
where $Z_1 :=  1_G \,  e^{- \lambda \, \tau_{i-1}} \; 1_{\{ \tau_{i-1}< \infty\}} $ is bounded and ${\cal F}_{\tau_{i-1} + t_1}$-measurable. Note that the last quantity is zero by the optional sampling theorem
 (see also Remark 2.2.14 in \cite{EK}).
Now we pass to the limit as $T \to \infty$ and get
 $E^{P_i} \Big [ \Big ( f(\tilde X_{t_2 }) -   f(\tilde X_{t_1 }) - \int_{t_1 }^{t_2 } A f(\tilde X_s)ds \Big) \, 1_{G} \Big] =0$. To justify such limit procedure one can use the  estimate
$$
\begin{array}{c}
 e^{- \lambda \, \tau_{i-1}} \; 1_{\{ \tau_{i-1}< \infty\}}
\int_{0}^{(t_2 + \tau_{i-1} ) \wedge T } |A f(X_s)| ds
 \le  Z_0,  \;\;\; T>0,
\end{array}
$$
where $ Z_0 :=\|Af \|_{\infty} (t_2 +  \tau_{i-1})
e^{- \lambda \, \tau_{i-1}} \; 1_{\{ \tau_{i-1}< \infty\}}$ is bounded.

Let us denote  by $Q_i$ the law of $Y^i$ on $(C_E, {\cal B}(C_E))$. We have (using \eqref{unic})
\begin{equation} \label{d4}
g(\lambda, f) =
 \sum_{i \ge 1} \alpha_{i-1} \,  E^{P_i} \Big[ \int_{0}^{\eta_i} e^{- \lambda t} f(Y^i_t) dt\Big]
=  \sum_{i \ge 1} \alpha_{i-1} \,  E^{Q_i} \Big[ \int_{0}^{\tau_{V_i}^{X}} e^{- \lambda t} f(X_t) dt\Big].
\end{equation}
Note that, for any $B \in {\cal B}(E)$,
\begin{equation} \label{ft9}
\begin{array}{c}
\mu_{i+1} (B) =  \frac{1}{\alpha_{i}} E^{P} \Big [e^{- \lambda \, \tau_{i-1}}
 \; 1_{\{ \tau_{i-1}< \infty\}}
  e^{- \lambda \, \eta_i}
 \; 1_{\{ \eta_i < \infty\}} 1_B (X_{\tau_i}) \Big]
 \\
= \frac{\alpha_{i-1}}{\alpha_{i}} E^{P_i} \Big [
  e^{- \lambda \, \eta_i}
 \; 1_{\{ \eta_i < \infty\}} 1_B (Y_{\eta_i}) \Big]
 =  \frac{\alpha_{i-1}}{\alpha_{i}} E^{Q_i} \Big [
  e^{- \lambda \, \tau_{V_i}^X}
 \; 1_{\{ \tau_{V_i}^X < \infty\}} 1_B (X_{\tau_{V_i}^X}) \Big],
 \end{array}
\end{equation}
and, for $i \ge 1,$
$$
 \alpha_{i} = \alpha_{i-1} E^{P_i} \big[e^{- \lambda \, \eta_{i}} \; 1_{\{ \eta_{i}< \infty\}} ]
  = \alpha_{i-1}
  E^{Q_i} \big [
  e^{- \lambda \, \tau_{V_i}^X}
 \; 1_{\{ \tau_{V_i}^X < \infty\}}  \big]
= \prod_{k=1}^i E^{Q_k} \big [
  e^{- \lambda \, \tau_{V_k}^X}
 \; 1_{\{ \tau_{V_k}^X < \infty\}}  \big].
$$
 Now $\mu_1 = \delta_x$ determines $Q_1$ by uniqueness of
 the stopped martingale problem and then $Q_1$ determine $\mu_2$ by \eqref{ft9}. Proceeding in this way,
 $Q_1, \ldots, Q_i$ determine $\mu_{i+1}$ and again by uniqueness
  this characterize $Q_{i+1}$, $i \ge 1$. By \eqref{d4}, for any $\lambda>0$, for any $f \in C_b(E)$,
 $g(\lambda, f)$ is completely determined independently of the martingale solution $P$ for $(A, \delta_x)$ we have chosen. This completes the proof.
 \end{proof}

\smallskip
Combining  Theorems \ref{key} and \ref{uni}  and using Corollary \ref{loc} we get the following {\it localization principle}. It extends  Theorem 6.6.1 in \cite{SV79}   and shows that to perform the localization procedure it is enough to have existence of (global) martingale solutions of any $x \in E$.

\begin{theorem} \label{uni1}  Assume that $A$ verifies Hypothesis \ref{bpt} and
    that for any $x \in
  E$ there exists a martingale solution  for  $(A, \delta_x)$.
  Suppose that
 there exists
a family $\{ U_j\}_{j \in J}$ of open sets $U_j \subset E$  with $\cup_{j \in J} U_j = E$  and  linear operators $A_j$ with the same domain of $A$, i.e., $ A_j: D(A) \subset C_b (E) \to B_b(E)$, $j \in J $ such that

\hh
i)  for any $j \in J$, the martingale problem for $A_j$ is well-posed.

\hh ii) for any $j \in J$, $f \in D(A)$, we have
$
 A_j  f(x) = A f(x),\;\; x \in U_j.
$

Then the martingale problem for $A$ is well-posed. In addition, $(P^x)$ depends measurably on $x$
and so   formula \eqref{es} holds for any $\mu \in {\cal P}(E)$.
  \end{theorem}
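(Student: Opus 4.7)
The plan is to assemble the theorem from three pieces already established in this appendix: a Lindel\"of-type reduction to a countable subcover, Corollary \ref{loc} to transfer well-posedness from each $A_j$ to the stopped problem for $A$ on $U_j$, and Theorem \ref{uni} to re-globalize. The final measurability assertion will then be read off from Theorem \ref{one}.

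First I would pass to a countable subcover. Since $E$ is a complete separable metric space it is second countable, hence Lindel\"of, so the open cover $\{U_j\}_{j \in J}$ admits a countable subcover. Relabeling indices, I may assume $\{U_k\}_{k \ge 1}$ covers $E$ and that associated operators $\{A_k\}_{k \ge 1}$, with common domain $D(A)$, still satisfy (i) and (ii).

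Next, for each fixed $k \ge 1$ I would apply Corollary \ref{loc} with $A_1 := A_k$ and $A_2 := A$ on the open set $U_k$. The operators share the domain $D(A)$; condition (ii) yields $A_k f(x) = A f(x)$ for every $x \in U_k$ and every $f \in D(A)$; condition (i) supplies the well-posedness of the (global) martingale problem for $A_k$ required in the corollary. Hence, for every $\mu \in \mathcal{P}(E)$, the stopped martingale problem for $(A, \mu, U_k)$ is well-posed.

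Finally, I would invoke Theorem \ref{uni} applied to $A$ itself with the countable cover $\{U_k\}_{k \ge 1}$: Hypothesis \ref{bpt} on $A$ is given, the existence of a martingale solution for $(A, \delta_x)$ for every $x \in E$ is among the assumptions, and well-posedness of the stopped problems $(A, \mu, U_k)$ was established in the previous step. Theorem \ref{uni} then gives well-posedness of the global martingale problem for $A$, and the measurability of $x \mapsto P^x$ together with formula \eqref{es} follow immediately from Theorem \ref{one}. I do not expect any substantive obstacle here, since all the deep content (the extension lemma \ref{stop}, the stopping-and-restarting scheme of Theorem \ref{uni}, and the regular-conditional-probability arguments behind Theorem \ref{counta} and Theorem \ref{one}) has already been absorbed into the cited results; the only point worth checking with care is that Corollary \ref{loc} genuinely applies with $A_1 = A_k$, for which one must ensure that Hypothesis \ref{bpt} is in force for each $A_k$ (this is compatible with the way well-posedness of $A_k$ is used via Theorem \ref{key} in the proof of that corollary, and in practice a single countable set $H_0 \subset D(A)$ can be chosen that works simultaneously for $A$ and for all the $A_k$).
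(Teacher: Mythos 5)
Your proposal is correct and follows essentially the same route as the paper: pass to a countable subcover by separability, apply Corollary \ref{loc} (with $A_1 = A_j$, $A_2 = A$) to get well-posedness of each stopped problem $(A,\mu,U_k)$, then conclude via Theorem \ref{uni}, with the measurability statement coming from Corollary \ref{ria} (itself a consequence of Theorem \ref{one}). Your closing remark about ensuring Hypothesis \ref{bpt} for each $A_k$ when invoking Corollary \ref{loc} is a legitimate point of care that the paper leaves implicit, but it does not change the argument.
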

\begin{proof} Since $E$ is   a  separable metric space we can consider a countable sub-covering of $\{ U_j\}_{j \in J}$ that we denote by $(U_k)_{k \ge 1}$ (i.e., $(U_k)_{k \ge 1} \subset \{ U_j\}_{j \in J}$ and $\cup_{k \ge 1} U_k =E$).

By Corollary \ref{loc}   we deduce that
 the
 stopped martingale problem for $(A, \mu, U_k)$ is well-posed for any  $\mu \in {\cal P}(E)$ and for any open set $U_k$. Applying Theorem \ref{uni} we obtain the first assertion.
 The measurability assertion follows from Corollary \ref{ria}.
 \end{proof}

\smallskip
 We finally
     mention another result on well-posedness in which one considers an increasing sequence of open sets.
It extends  Corollary 10.1.2 in \cite{SV79}
 and   can be proved  as   Theorem 6.6.3 in \cite{EK}.

\begin{theorem} \label{exi} Let $\mu \in {\cal P}(E) $ and  let $(U_k)_{k \ge 1}$ be an  increasing sequence  of open sets in $E$, i.e., $U_k \subset U_{k+1},$ $k \ge 1.$
 Suppose that, for any $k \ge 1$, there exists a unique (in law) solution  for the stopped martingale problem for $(A, \mu, U_k)$.

Let $Z^k$ be a solution for the stopped martingale problem for $(A, \mu, U_k)$ defined on a probability space
 $(\Omega^k,{\cal F}^k, P^k )$ and consider
$
\tau_k = \tau_k^{Z^k} = \inf \{ t \ge 0 \; :\; Z_t^k \not \in U_k \}.
$

There exists a unique solution for the martingale
problem for $(A, \mu)$ if, for any $t>0,$
\begin{equation}\label{stop2}
    \lim_{k \to \infty} P^k (\tau_k \le t) =0.
\end{equation}
\end{theorem}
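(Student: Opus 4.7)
The plan is to assemble the stopped solutions $Z^k$ into a single martingale solution for $(A,\mu)$ on $C_E[0,\infty)$, and then to derive uniqueness by reducing to the assumed uniqueness for the stopped problems. A key preliminary observation is the following consistency property: for $j \le k$, since $U_j \subset U_k$ one has $\tau_{U_j}^{Z^k} \le \tau_k$, so by optional sampling the process $(Z^k_{t \wedge \tau_{U_j}^{Z^k}})_{t \ge 0}$ solves the stopped martingale problem for $(A,\mu,U_j)$, and by the uniqueness hypothesis its law coincides with that of $Z^j$.

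For existence, for each $T>0$ I would introduce the sub-probability measures on $C_E[0,T]$ defined by
\[
\nu_k^T(B) := P^k\bigl(\{Z^k|_{[0,T]} \in B\}\cap\{\tau_k > T\}\bigr),
\]
and use the consistency above together with the inclusion $\{\tau_{U_j}^{Z^k}>T\} \subset \{\tau_k > T\}$ to obtain $\nu_j^T \le \nu_k^T$ for $j \le k$. Hypothesis \eqref{stop2} forces $\nu_k^T(C_E[0,T]) = P^k(\tau_k > T) \to 1$, so the increasing family $\nu_k^T$ converges to a probability measure $P_T$ on $C_E[0,T]$. A parallel argument shows that restriction from $C_E[0,T_2]$ to $C_E[0,T_1]$ sends $P_{T_2}$ to $P_{T_1}$ (the discrepancy in the approximants is bounded by $P^k(\tau_k \le T_2) \to 0$), so by the standard projective limit theorem for Polish spaces the family $\{P_T\}$ determines a probability $P$ on $C_E[0,\infty)$. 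To check the martingale identity under $P$, fix $f \in D(A)$, $0 \le s_1 < \cdots < s_m \le t_1 < t_2$, $h_j \in C_b(E)$ and $T \ge t_2$, and write, with $M_t(f)(\omega) = f(\omega(t)) - \int_0^t Af(\omega(s))\, ds$,
\[
E^P\Big[\bigl(M_{t_2}(f)-M_{t_1}(f)\bigr)\prod_j h_j(X_{s_j})\Big] = \lim_{k\to\infty} E^{P^k}\Big[\bigl(M_{t_2\wedge\tau_k}(f)-M_{t_1\wedge\tau_k}(f)\bigr)\prod_j h_j(Z^k_{s_j})\, 1_{\{\tau_k>T\}}\Big];
\]
adding and subtracting the same expectation without $1_{\{\tau_k>T\}}$ kills the result by the stopped martingale identity for $Z^k$ (the factor $\prod_j h_j(Z^k_{s_j})$ being ${\cal F}^{Z^k}_{t_1}$-measurable) and leaves an error bounded by $(2\|f\|_\infty + (t_2-t_1)\|Af\|_\infty)\prod_j\|h_j\|_\infty \cdot P^k(\tau_k \le T)$, which vanishes as $k \to \infty$ by \eqref{stop2}.

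For uniqueness, if $X^1, X^2$ are two martingale solutions for $(A,\mu)$, then optional sampling shows that $X^i$ stopped at $\tau_{U_k}^{X^i}$ solves the stopped martingale problem for $(A,\mu,U_k)$; by hypothesis it has the same law as $Z^k$. In particular the laws of $\tau_{U_k}^{X^i}$ and $\tau_k$ coincide, so the probability that $X^i$ exits $U_k$ before $T$ tends to $0$. For any bounded Borel $F$ on $C_E[0,T]$ this gives
\[
E\bigl[F(X^i|_{[0,T]})\bigr] = \lim_k E\bigl[F(X^i|_{[0,T]})\, 1_{\{\tau_{U_k}^{X^i}>T\}}\bigr] = \lim_k \int F\, d\nu_k^T,
\]
a quantity independent of $i$, and hence $X^1$ and $X^2$ have the same finite-dimensional distributions.

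The main obstacle is the existence half, where one has to produce a limit measure $P$ and verify the martingale identity under it without the stopped martingale property being directly available on $C_E[0,\infty)$. The recipe is to isolate the event $\{\tau_k>T\}$ so that the stopped identity applies on it, and then use \eqref{stop2} to absorb the complementary error uniformly in $k$; consistency of the family $\nu_k^T$, which underlies both the existence of the projective limit and the cancellation in the martingale identity, is where the uniqueness of the stopped problems (combined with optional sampling) is crucially invoked.
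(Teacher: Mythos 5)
Your proof is correct, but it takes a different route from the paper on the existence half. The paper only sketches the argument, referring to Theorem 6.6.3 in \cite{EK}: one shows that the laws of the $Z^k$ converge in the Prokhorov metric to the law of a continuous process $Z_\infty$ and then verifies the martingale property in the limit. You instead work with the increasing family of sub-probability measures $\nu_k^T = P^k(\{Z^k|_{[0,T]}\in\cdot\}\cap\{\tau_k>T\})$, whose monotonicity follows from the key consistency observation (optional sampling plus uniqueness of the stopped problems --- the same observation that drives the EK argument), and pass to a setwise limit $P_T$ of full mass by \eqref{stop2}, gluing the $P_T$ by a projective limit. This buys two things: you avoid any tightness/weak-convergence argument, and since setwise convergence of an increasing sequence of measures gives convergence of integrals of all \emph{bounded Borel} functionals, you can verify the martingale identity directly even though $M_t(f)(\omega)=f(\omega(t))-\int_0^t Af(\omega(s))\,ds$ need not be continuous on $C_E[0,T]$ when $Af\in B_b(E)$ only --- a point the weak-convergence route has to handle separately. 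Your uniqueness half is essentially the paper's. Two small items you should make explicit: that $P\circ X_0^{-1}=\mu$ (immediate, since $P^k(Z_0^k\in C,\tau_k\le T)\to 0$), and that the exit time $\tau_{U_j}$ from an open set is a measurable functional of a continuous path equal to the exit time of the stopped path, which is what lets you transfer both the inclusion $\{\tau_{U_j}^{Z^k}>T\}\subset\{\tau_k>T\}$ and the identity $\int F\,d\nu_k^T=E[F(X^i|_{[0,T]})1_{\{\tau_{U_k}^{X^i}>T\}}]$ through equality of laws of stopped paths.
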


\begin{proof} One can adapt without difficulties the  proof of Theorem 6.6.3 in \cite{EK} which deals with c\`adl\`ag martingale solutions. To this purpose,
using  \eqref{stop2}, one first proves that there exists a continuous process $Z_{\infty}$ with values in $E$ such that  the law of $Z^k$ converges in the  Prokhorov distance to the law of $Z_{\infty}$.
One checks that $Z_{\infty}$ is a solution of the martingale problem for $(A, \mu)$. Also the  uniqueness part can be proved as in \cite{EK}.
\end{proof}


\begin{corollary} \label{exi2}  Assume that $A$ verifies Hypothesis \ref{bpt}.
   Suppose that there exists an  increasing sequence  of open sets $(U_k)_{k \ge 1}$ in $E$  and  linear operators $A_k$ with the same domain of $A$. Moreover, assume:

\noindent
i)  for any $k \ge 1 $,   the martingale problem for $A_k$ is well-posed;

\noindent ii) for any $k \ge 1$, $f \in D(A)$, we have
$
 A_k  f(x) = A f(x),\;\; x \in U_k.
$

\noindent  For $x \in E$,   let $X^k= X^{k,x}$ be a martingale solution
for  $(A_k, \delta_x)$ defined on a probability space $(\Omega^k,{\cal F}^k, P^k )$;    define
$
\tau_k =\tau_k^x  = \inf \{ t \ge 0 \; :\; X_t^k \not \in U_k \}.
$

Then the martingale problem for $A$ is well-posed if, for any $x \in E$,   for any $t>0,$
\begin{equation}\label{stop21}
    \lim_{k \to \infty} P^k (\tau_k \le t) =0.
\end{equation}
  \end{corollary}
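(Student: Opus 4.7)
The plan is to apply Theorem \ref{exi} at each point mass $\delta_x$ and then lift the conclusion to arbitrary initial laws via Corollary \ref{ria}. Fix $x \in E$ and set $\mu = \delta_x$; I verify the two hypotheses of Theorem \ref{exi} for the increasing sequence $(U_k)$.

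For the first hypothesis I need well-posedness of the stopped martingale problem for $(A, \delta_x, U_k)$ for every $k \ge 1$. Assumptions (i) and (ii) of the corollary give exactly that $A_k$ has a well-posed (global) martingale problem and that $A_k f = Af$ on $U_k$ for every $f \in D(A)$. Corollary \ref{loc}, applied with $A_1 = A_k$ and $A_2 = A$, therefore yields well-posedness of the stopped martingale problem for $(A, \delta_x, U_k)$ (and in fact for any $\mu \in {\cal P}(E)$).

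For the second hypothesis I have to produce, for an arbitrary solution $Z^k$ of the stopped problem for $(A, \delta_x, U_k)$, the non-explosion bound $\lim_{k \to \infty} P^k(\tau_k^{Z^k} \le t) = 0$. The strategy is to match the law of $Z^k$ with that of the stopped process $(X^{k,x}_{t \wedge \tau_k^x})_{t \ge 0}$ coming from the statement. By the optional sampling theorem applied to the martingales $M_t(f) = f(X_t^k) - \int_0^t A_k f(X_s^k)\, ds$, $f \in D(A)$, the stopped process $(X^{k,x}_{t \wedge \tau_k^x})$ solves the stopped problem for $(A_k, \delta_x, U_k)$; since $A_k f = Af$ on $U_k$, the argument in the existence part of Corollary \ref{loc} shows that it equally solves the stopped problem for $(A, \delta_x, U_k)$. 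Uniqueness obtained in the first step then forces the laws of $Z^k$ and $(X^{k,x}_{t \wedge \tau_k^x})$ to coincide, whence $\tau_k^{Z^k}$ and $\tau_k^x$ share the same distribution, and hypothesis \eqref{stop21} becomes exactly \eqref{stop2}.

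With both hypotheses of Theorem \ref{exi} in force for $\mu = \delta_x$, that theorem yields a unique solution of the martingale problem for $(A, \delta_x)$. Since $x$ was arbitrary and Hypothesis \ref{bpt} is among the standing assumptions, Corollary \ref{ria} upgrades this pointwise well-posedness to well-posedness of the martingale problem for $A$ from every $\mu \in {\cal P}(E)$. I do not foresee any serious obstacle; the only step deserving care is the law identification in the second paragraph, where one must confirm that $(X^{k,x}_{t \wedge \tau_k^x})$ is genuinely a solution of the stopped $(A_k, \delta_x, U_k)$ problem, which is a routine application of optional sampling to the martingale characterization of $X^{k,x}$.
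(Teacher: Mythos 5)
Your argument is correct and follows essentially the same route as the paper: reduce to $\mu=\delta_x$ via Theorem \ref{one} (equivalently Corollary \ref{ria}), invoke Corollary \ref{loc} to get well-posedness of the stopped problems for $(A,\delta_x,U_k)$, identify $(X^{k,x}_{t\wedge\tau_k^x})$ as a solution of the stopped problem so that \eqref{stop21} becomes \eqref{stop2}, and conclude with Theorem \ref{exi}. Your explicit law-identification step in the second paragraph only spells out what the paper compresses into ``it follows that \eqref{stop21} is just \eqref{stop2}.''
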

\begin{proof} By Theorem \ref{one} it is enough to prove that for any $x \in E$, the martingale problem for $(A, \delta_x)$ is well-posed.
 Let us fix $x \in E$. By Corollary \ref{loc} the stopped martingale problems for $(A, \delta_x, U_k)$ are well-posed, $k \ge 1$.
  If $X^k$ is a solution of the martingale problem for $(A_k, \delta_x)$ defined on $(\Omega^k, {\cal F}^k, P^k)$ then
$Z^k:= (X_{t \wedge \tau_k^x}^k)_{t \ge 0}$ is a solution for the stopped martingale problem for
$(A_k, \delta_x, U_k)$, with $\tau_k = \tau_{U_k}^{Z^k}$.
 If follows that \eqref{stop21} is just  \eqref{stop2}.
 By Theorem \ref{exi} there exists a unique martingale solution for $(A, \delta_x)$ and this finishes the proof.
\end{proof}

%

\end{document}